\newcommand{\myalert}[1]{{\vspace{1mm}\noindent\textbf{#1}}}
\newcommand{\xmark}{\ding{55}}
\newcommand{\tickmark}{\ding{51}}
\newcommand\Vector[1]{\mathbf{#1}}
\newcommand\vb{{\Vector{b}}}
\newcommand\vh{{\Vector{h}}}
\newcommand\vu{{\Vector{u}}}
\newcommand\vv{{\Vector{v}}}
\newcommand\vw{{\Vector{w}}}
\newcommand\vx{{\Vector{x}}}
\newcommand\vy{{\Vector{y}}}
\newcommand\vz{{\Vector{z}}}
\newcommand\MATRIX[1]{\mathbf{#1}}
\newcommand\mA{{\MATRIX{A}}}
\newcommand\mI{{\MATRIX{I}}}
\newcommand\mX{{\MATRIX{X}}}
\newcommand\bigO{\mathcal{O}}
\newcommand\op{{\mathrm{op}}}
\DeclareMathOperator*{\argmin}{arg\,min}
\DeclareMathOperator{\dom}{dom\,}
\newtheorem{assumption}{Assumption}
\newtheorem{definition}{Definition}
\newtheorem{theorem}{Theorem}
\newtheorem{lemma}[theorem]{Lemma}
\newtheorem{proposition}[theorem]{Proposition}
\newtheorem{corollary}[theorem]{Corollary}
\newtheorem{remark}{Remark}
\numberwithin{assumption}{section}
\numberwithin{definition}{section}
\numberwithin{theorem}{section}
\numberwithin{remark}{section}
\newcommand{\tikzmark}[1]{\tikz[overlay,remember picture] \node (#1) {};}
\newcommand*{\AddNote}[4]{%
    \begin{tikzpicture}[overlay, remember picture]
        \draw [decoration={brace,amplitude=0.3em},decorate,thick,black]
            ($(#3.east)!(#1.north)!($(#3.east)-(0,1)$)$) --  
            ($(#3.east)!(#2.south)!($(#3.east)-(0,1)$)$)
                node [align=left, text width=4cm, pos=0.5, anchor=west] {#4};
    \end{tikzpicture}
}%
\newcommand{\blue}[1]{{\color{blue}{#1}}}
\renewcommand{\blue}[1]{#1}
\begin{document}

\title{Generalized Optimistic Methods for Convex-Concave Saddle Point Problems}

\author{Ruichen Jiang\thanks{Department of Electrical and Computer Engineering, The University of Texas at Austin, Austin, TX, USA \qquad\qquad\{rjiang@utexas.edu, mokhtari@austin.utexas.edu\}}         \and
        Aryan Mokhtari$^*$
}

\date{}

\maketitle

\begin{abstract} The optimistic gradient method has seen increasing popularity as an efficient first-order method for solving convex-concave saddle point problems. To analyze its iteration complexity, a recent work \cite{Mokhtari2020} proposed an interesting perspective that interprets the optimistic gradient method as an approximation to the proximal point method. In this paper, we follow this approach and distill the underlying idea of optimism to propose a generalized optimistic method, which encompasses the optimistic gradient method as a special case. Our general framework can handle constrained saddle point problems with composite objective functions and can work with arbitrary norms with compatible Bregman distances. Moreover, we also develop a backtracking line search scheme to select the step sizes without knowledge of the smoothness coefficients. We instantiate our method with first-order, second-order and higher-order oracles and give best-known global iteration complexity bounds. 
{For our first-order method, we show that the averaged iterates converge at a rate of $\bigO(\nicefrac{1}{N})$ when the objective function is convex-concave, and it achieves linear convergence when the objective is further strongly-convex-strongly-concave. For our second-order and higher-order methods, under the additional assumption that the distance-generating function has Lipschitz gradient, we prove a complexity bound of $\bigO(\nicefrac{1}{\epsilon}^{\frac{2}{p+1}})$ in the convex-concave setting and a complexity bound of $\bigO((\nicefrac{L_p D^{\frac{p-1}{2}}}{\mu})^{\frac{2}{p+1}}+\log\log \frac{1}{\epsilon})$ in the strongly-convex-strongly-concave setting,}
where $L_p$ ({$p\geq 2$}) is the Lipschitz constant of the $p$-th-order derivative, $\mu$ is the strong convexity parameter, and $D$ is the initial Bregman distance to the saddle point.
Moreover, our line search scheme provably only requires a constant number of calls to a subproblem solver per iteration on average, making our first-order and second-order methods particularly amenable to implementation.   
\end{abstract}

\newpage
\tableofcontents
\newpage

\section{Introduction}

In this paper, we study convex-concave saddle point problems, also known as minimax optimization problems, where the objective function for both minimization and maximization has a composite structure. %
Specifically, we consider
\begin{equation}\label{eq:minimax}
    \min_{\vx\in\mathcal{X}} \max_{\vy\in\mathcal{Y}}\;\;\; \ell(\vx,\vy):=f(\vx,\vy)+h_1(\vx)-h_2(\vy),
\end{equation}
where $\mathcal{X}\subset \mathbb{R}^m$ and $\mathcal{Y} \subset \mathbb{R}^n$ are nonempty closed convex sets, $h_1:\mathbb{R}^m\rightarrow (-\infty,+\infty]$ and $h_2:\mathbb{R}^n\rightarrow (-\infty,+\infty]$ are proper closed convex functions, and $f$ is a smooth function defined on some open set $\dom f \subset \mathbb{R}^m \times \mathbb{R}^n$ containing $\mathcal{X}\times \mathcal{Y}$. Throughout the paper, we assume that $f$ is convex with respect to $\vx$ and concave with respect to $\vy$, i.e., $f(\cdot,\vy)$ is convex for each $\vy\in \mathcal{Y}$ and $f(\vx,\cdot)$ is concave for each $\vx\in \mathcal{X}$. As a result, the objective function $\ell$ is also convex-concave. 

Problem~\eqref{eq:minimax}, which we refer to as the {composite saddle point problem}, encompasses several important special cases. For instance, when $h_1\equiv 0$ on $\mathcal{X}$ and $h_2\equiv 0$ on $\mathcal{Y}$, it becomes the constrained smooth saddle point problem \cite{Korpelevich1976,Popov1980,Nemirovski2004}. If we further have $\mathcal{X}=\mathbb{R}^m$ and $\mathcal{Y}=\mathbb{R}^n$, it reduces to the unconstrained smooth saddle point problem \cite{Daskalakis2018,Mokhtari2020a,Mokhtari2020}. 
Problem \eqref{eq:minimax} is of central importance in the duality theory of constrained optimization {and appears as the primal-dual formulation of conic programming~\cite{Hamedani2021}, regularized empirical risk minimization~\cite{Zhang2015} and several imaging problems~\cite{Chambolle2011}.} 
It also arises in settings 
{such as zero-sum games~\cite{Basar1998} and distributionally robust optimization~\cite{Yu2022}.}

Various iterative methods---mostly first-order---have been proposed in the literature for solving saddle point problems. In particular, we will focus on optimistic methods, whose idea was first introduced by Popov \cite{Popov1980} and have gained much attention recently in the machine learning community \cite{Chiang2012,Rakhlin2013,Daskalakis2018,Liang2019,gidel2018a,Peng2020}. For constrained smooth strongly-convex strongly-concave problems (where $f$ is smooth and strongly-convex-strongly-concave), the result in \cite{gidel2018a} showed that Popov's method converges linearly and finds an $\epsilon$-accurate solution with an iteration complexity bound of $\bigO(\kappa_1\log(1/\epsilon))$, where $\kappa_1$ is the condition number we shall define in Section~\ref{subsec:related_work}. Moreover, a variant of Popov's method proposed in \cite{Malitsky2020} for monotone inclusion problems is proved to achieve the same complexity bound in the more general composite setting. For unconstrained smooth convex-concave problems, Popov's method, also more commonly known as the optimistic gradient descent-ascent (OGDA) method, is shown to achieve the complexity bound of $\bigO(1/\epsilon)$ in terms of the primal-dual gap \cite{Mokhtari2020}. 

In this paper, we follow and extend the approach in \cite{Mokhtari2020a,Mokhtari2020} by interpreting the optimistic method as an approximation of the proximal point method (PPM) \cite{Martinet1970,Rockafellar1976}. %
In particular, we propose a generalized optimistic method (see Algorithm~\ref{alg:GOMD}), where the future gradient required in the update of the PPM is replaced by the combination of a prediction term and a correction term. The prediction term serves as a local approximation of the future gradient, while the correction term is given by the prediction error at the previous iteration. 
In our framework, the existing first-order optimistic method corresponds to using the current gradient as the prediction term, whereas our theory allows general prediction terms in the setup of arbitrary norms and compatible Bregman distances.  
{Moreover, we develop a simple backtracking line search scheme (Algorithm~\ref{alg:ls}) to select the step sizes in our generalized optimistic method adaptively without knowledge of the smoothness coefficients of the objective.}        
By instantiating our method with different oracles, we obtain first-order, second-order and higher-order optimistic methods for the saddle point problem in \eqref{eq:minimax}. We give {best-known} complexity bounds of these methods in both the convex-concave and strongly-convex-strongly-concave settings, all matching the best existing upper bounds in their corresponding problem classes (see Section~\ref{subsec:related_work} for detailed comparisons).  The complexity of our line search scheme in terms of the number of calls to a subproblem solver is also presented. Our theoretical findings are summarized as follows:
\begin{enumerate}[label=(\alph*)]
\vspace{-2mm}
    \item When the smooth component $f$ of the objective function in \eqref{eq:minimax} has Lipschitz continuous gradient, our first-order optimistic method generalizes the OGDA method in \cite{Mokhtari2020a,Mokhtari2020} which only focuses on unconstrained smooth saddle point problems. We prove a complexity bound of $\bigO(1/\epsilon)$ in terms of the primal-dual gap in the convex-concave setting, and prove linear convergence with a complexity bound of $\bigO(\kappa_1 \log(1/\epsilon))$ in terms of the distance to the optimal solution in the strongly-convex-strongly-concave setting. In addition, by incorporating our line search scheme, we obtain an adaptive first-order optimistic method with the same convergence rates while only requiring a constant number of calls to a subproblem solver per iteration on average. 
    \vspace{2mm}
    \item When $f$ has Lipschitz continuous Hessian {and the distance-generating function has Lipschitz gradient}, we obtain an adaptive second-order optimistic method where the step sizes are chosen by our line search scheme. In the convex-concave setting, we show that it achieves a complexity of $\bigO(\epsilon^{-\frac{2}{3}})$ in terms of the primal-dual gap. In the strongly-convex-strongly-concave setting,  we prove global convergence and a local R-superlinear convergence rate of the order~$\frac{3}{2}$ for our method, leading to an overall complexity of $\bigO((\kappa_2(\vz_0))^{\frac{2}{3}} + \log\log\frac{1}{\epsilon})$ in terms of the distance to the optimal solution (here $\kappa_2(\vz_0)$ is {a problem-dependent constant}
    we shall define in \eqref{eq:condition_number}). Also, we prove that the line search procedure on average only requires a constant number of calls to a subproblem solver per iteration. %
    
    \vspace{2mm}
    \item When $f$ has Lipschitz $p$-th-order derivative with $p\geq 3$ {and the distance-generating function has Lipschitz gradient}, we further extend the results above and propose an adaptive $p$-th-order optimistic method. In the convex-concave setting, the complexity bound is improved to $\bigO(\epsilon^{-\frac{2}{p+1}})$. In the strongly-convex-strongly-concave setting, we prove global convergence and a local R-superlinear convergence rate of the order $\frac{p+1}{2}$, leading to an overall complexity of $\bigO((\kappa_p(\vz_0))^{\frac{2}{p+1}} + \log\log\frac{1}{\epsilon})$ (here $\kappa_p(\vz_0)$ is {defined in \eqref{eq:condition_number}}). Similarly, the line search procedure on average only requires a constant number of  calls to a subproblem solver per iteration.  
\end{enumerate}

\subsection{Related work}\label{subsec:related_work}

In this section, we review iterative methods for convex-concave saddle point problems and their iteration complexity results.  
It is worth noting that most existing methods are developed by reformulating the saddle point problem in~\eqref{eq:minimax} as a monotone variational inequality and/or an inclusion problem (see Section \ref{subsec:saddle_point_problem}), which is also the approach we take to derive our generalized optimistic method. Therefore, we also include methods for solving this broader class of problems. %

For ease of exposition, in the strongly-convex-strongly-concave case, we define the condition number by $\kappa_1:=L_1/\mu$, where $L_1$ is the Lipschitz constant of the gradient of $f$ (cf. Assumption~\ref{assum:first_order}) and $\mu$ is the strong convexity parameter (cf. Assumption~\ref{assum:strongly_monotone}). More generally, for $p\geq 2$, 
{we define a problem-dependent constant} 
\begin{equation}\label{eq:condition_number}
	\kappa_p(\vz_0) = \frac{L_p (D_{\Phi}(\vz^*,\vz_0))^{\frac{p-1}{2}}}{\mu},
\end{equation}
where $L_p$ is the Lipschitz constant of the $p$-th-order derivative of $f$ (cf. Assumptions~\ref{assum:second_order} and~\ref{assum:high_order}) and $D_{\Phi}(\vz^*,\vz_0)$ is the Bregman distance between the optimal solution $\vz^*=(\vx^*,\vy^*)$ and the initial point $\vz_0=(\vx_0,\vy_0)$ (see Section~\ref{sec:preliminaries} for formal definitions). 
Intuitively, these quantities measure the hardness of solving the saddle point problems and they naturally appear in the complexity bounds of the first-order, second-order and higher-order methods, respectively. 

\noindent \textbf{First-order methods.} One of the earliest methods for solving constrained saddle point problems is the Arrow-Hurwicz method \cite{Arrow1958}, also known as gradient descent ascent\footnote{It corresponds to the projection method for variational inequalities, and the forward-backward method for monotone inclusion problems (see, e.g., \cite{Facchinei2003}).}. 
However, this method could fail to converge when the function $f$ in \eqref{eq:minimax} is smooth and only convex-concave, and only achieves a suboptimal iteration complexity of $\bigO(\kappa_1^2\log(1/\epsilon))$ when $f$ is smooth and strongly-convex-strongly-concave \cite{Nesterov2011,Fallah2020}. 
To remedy these issues, Korplevich \cite{Korpelevich1976} and Popov \cite{Popov1980} proposed two different modifications by introducing ``extrapolation'' and ``optimism'' into the Arrow-Hurwicz method, respectively. %

Korplevich's extragradient method \cite{Korpelevich1976} solves constrained saddle point problems by modifying the Arrow-Hurwicz method with an extrapolation step. For smooth and strongly-convex-strongly-concave functions, it converges linearly and finds an $\epsilon$-accurate saddle point within $\bigO(\kappa_1 \log(1/\epsilon))$ iterations \cite{Tseng1995,Azizian2020,Mokhtari2020a}. The mirror-prox method \cite{Nemirovski2004} generalizes the extragradient method and works with a general Bregman distance, achieving the ergodic iteration complexity of $\mathcal{O}(1/\epsilon)$ for smooth monotone {variational} inequalities and smooth convex-concave saddle point problems with compact feasible sets. 
Subsequently, it was extended to a class of {non-Euclidean distance-like functions} by \cite{Auslender2005}, to unbounded feasible sets by \cite{Monteiro2012}, and to composite objectives by \cite{Tseng2008,Monteiro2011,He2015}, all with the same rate of convergence. 
Along another line of research, the dual extrapolation method \cite{Nesterov2007a} was developed for variational inequalities and saddle point problems, where the extrapolation step is performed in the dual space. It is also shown to achieve the $\bigO(1/\epsilon)$ complexity bound for smooth monotone variational inequalities. Moreover, for smooth strongly monotone variational inequalities, it is proved to achieve the optimal iteration complexity of $\bigO(\kappa_1\log(1/\epsilon))$ when combined with the technique of estimate functions \cite{Nesterov2011}. 
Finally, Tseng \cite{Tseng2000} utilized the similar extrapolation idea to propose a splitting method for monotone inclusion problems and proved that the method converges linearly and achieves the iteration complexity of $\bigO(\kappa_1\log(1/\epsilon))$ for smooth strongly monotone inclusion problems. Later, a variant of Tseng's splitting method is shown to also achieve the iteration complexity of $\bigO(1/\epsilon)$ for smooth monotone inclusion problems and smooth convex-concave saddle point problems by using the hybrid proximal extragradient (HPE) framework \cite{Monteiro2011}. 

A defining characteristic of the above methods is that they need to maintain two intertwined sequences of updates at which the gradients are computed. In contrast, Popov's method \cite{Popov1980}, whose special case is also known as optimistic gradient descent ascent (OGDA) \cite{Daskalakis2018,Mokhtari2020a,Mokhtari2020}, requires one gradient computation per iteration.  It is shown to have a complexity of $\bigO(\kappa_1\log(1/\epsilon))$ for constrained saddle point problems when the objective is smooth and strongly-convex-strongly-concave \cite{gidel2018a,Azizian2020,Mokhtari2020a}; see also a similar result in \cite{Malitsky2020} for a variant of Popov's method on monotone inclusion problems. Moreover, by viewing OGDA as an approximation of the proximal point method, the authors in \cite{Mokhtari2020} proved an ergodic complexity of $\bigO(1/\epsilon)$ for unconstrained saddle point problems when the objective is smooth and convex-concave, matching that of the extragradient method.  
Several other variants of Popov's method have also appeared in the literature \cite{Malitsky2015,Malitsky2017}; see \cite{Hsieh2019} for more details. In a concurrent work \cite{kotsalis2022simple} on variational inequalities, the authors extended the method in \cite{Malitsky2020} to a general Bregman distance and proved the same convergence guarantees as above for both the convex-concave and strongly-convex-strongly-concave settings.
We note that their method is similar to our first-order optimistic method in Section~\ref{sec:first_order}, though proposed from a different perspective. In this paper, we recover and further extend their results to composite objectives via our unified framework.

\begin{table}[!t]
    \renewcommand{\arraystretch}{1.6}
    \centering
    \caption{Summary of first-order methods for saddle point problems. The third column indicates whether the method can handle general convex functions $h_1$ and $h_2$ in \eqref{eq:minimax} or requires $h_1\equiv 0$ and $h_2\equiv 0$. In the fourth column, ``C-C'' stands for ``convex-concave'' and ``SC-SC'' stands for ``strongly-convex-strongly-concave''. In the fifth column, ``$L_1$'' means $\nabla f$ is $L_1$-Lipschitz. The sixth column indicates whether the saddle point problem is constrained or not. The seventh column shows whether the method can work with a general Bregman distance or only the Euclidean norm. }\label{tab:first_order}
    \resizebox{\textwidth}{!}{%
        \begin{tabular}{|c|c|c|c|c|c|c|c|}
            \hline
			\multirow{2}{*}{Frameworks}			& \multirow{2}{*}{Methods} 		                                        & \multirow{2}{*}{Composite objective?}                                      & \multicolumn{2}{c|}{Assumptions}					    & \multirow{2}{*}{Constrained?} 	& \multirow{2}{*}{Distance} & \multirow{2}{*}{Complexity}           \\ \cline{4-5}
                                            	&                             	                                        &                                                         & Convexity                     & Smoothness            &									&                       &                                       \\ \hline
			\multirow{6}{*}{Extragradient}		& Mirror-prox \cite{Tseng2008}                                          & \tickmark                 		& C-C                           & $L_1$                 & \tickmark                 			& Bregman               & $\bigO(\epsilon^{-1})$             	\\ \cline{2-8}
											    & Extragradient \cite{Tseng1995}                                        & \xmark                            & SC-SC                         & $L_1$                 & \tickmark                   			& Euclidean             & $\bigO(\kappa_1\log\epsilon^{-1})$ 	\\ \cline{2-8}
                                            	& Dual extrapolation \cite{Nesterov2007a}                               & \xmark                            & C-C                           & $L_1$                 & \tickmark                 			& Bregman               & $\bigO(\epsilon^{-1})$ 				\\ \cline{2-8}
												& {Dual extrapolation \cite{Nesterov2011}}                              & \xmark                            & SC-SC                         & $L_1$                 & \tickmark                 			& Euclidean             & $\bigO(\kappa_1\log\epsilon^{-1})$	\\ \cline{2-8}
                                            	& {Tseng's splitting \cite{Monteiro2011}}                               & \tickmark                 & C-C                           & $L_1$                 & \tickmark                   			& Euclidean             & $\bigO(\epsilon^{-1})$                \\ \cline{2-8}
												& Tseng's splitting \cite{Tseng2000}                                    & \tickmark                 & SC-SC                         & $L_1$                 & \tickmark                   			& Euclidean             & $\bigO(\kappa_1\log\epsilon^{-1})$    \\ \hhline{*{8}{=}}
			{Primal-Dual}						& Accelerated primal-dual method \cite{Hamedani2021}																		& \tickmark					& C-C							& $L_1$					& \tickmark								& Bregman				& $\bigO(\epsilon^{-1})$				\\ \hhline{*{8}{=}}
			\multirow{8}{*}{Optimistic}			& OGDA \cite{Mokhtari2020}                                              & \xmark                            & C-C                           & $L_1$                 & \xmark                 			    & Euclidean             & $\bigO(\epsilon^{-1})$             	\\ \cline{2-8}
												& OGDA \cite{Mokhtari2020a}                                             & \xmark                            & SC-SC                         & $L_1$                 & \xmark                 			    & Euclidean             & $\bigO(\kappa_1\log\epsilon^{-1})$    \\ \cline{2-8}
												& Popov's method \cite{gidel2018a}                                      & \xmark                            & SC-SC                         & $L_1$                 & \tickmark                 			& Euclidean             & $\bigO(\kappa_1\log\epsilon^{-1})$    \\ \cline{2-8}
												& {Forward-reflected-backward method \cite{Malitsky2020}}               & \tickmark                 & SC-SC                         & $L_1$                 & \tickmark                 			& Euclidean             & $\bigO(\kappa_1\log\epsilon^{-1})$    \\ \cline{2-8}
												& {Operator extrapolation method \cite{kotsalis2022simple}}                   & \xmark                            & C-C                           & $L_1$                 & \tickmark                 			& Bregman               & $\bigO(\epsilon^{-1})$             	\\ \cline{2-8}
												& {Operator extrapolation method \cite{kotsalis2022simple}}                   & \xmark                            & SC-SC                         & $L_1$                 & \tickmark                 			& Bregman               & $\bigO(\kappa_1\log\epsilon^{-1})$    \\ \cline{2-8}
                                            	& \textbf{Theorems~\ref{thm:OMD_first} \&~\ref{thm:OMD_first_ls}}                                                     & \tickmark                 & C-C                           & $L_1$                 & \tickmark                 			& Bregman               & $\bigO(\epsilon^{-1})$ 				\\ \cline{2-8}
                                            	& \textbf{Theorems~\ref{thm:OMD_first} \&~\ref{thm:OMD_first_ls}}                                                     & \tickmark                 & SC-SC                         & $L_1$                 & \tickmark                 			& Bregman               & $\bigO(\kappa_1\log\epsilon^{-1})$    \\ \hline
        \end{tabular}%
    }
    \vspace{-1mm}
\end{table}

There is also a popular family of algorithms in the literature, known as the primal-dual method \cite{Chambolle2011,Condat2013,Chen2014,Chambolle2016, Malitsky2018,Hamedani2021}, that is more tailored to the structure of the saddle point problem by updating the $\vx$ and $\vy$ variables alternatively. They are mostly designed for \emph{bilinear} saddle point problems where the function $f$ in~\eqref{eq:minimax} is given by $f(\vx,\vy)=f_1(\vx)+\langle \mA \vx,\vy \rangle$, with $f_1$ being a smooth convex function and $\mA$ being an $n$-by-$m$ matrix. A notable exception is the recent work in \cite{Hamedani2021}, which generalized the original primal-dual method in \cite{Chambolle2016} for solving the general composite saddle point problem as in~\eqref{eq:minimax}. It was shown to achieve the complexity of $\bigO(1/\epsilon)$ in terms of the primal-dual gap when $f$ is smooth and convex-concave, and a better complexity of $\bigO(1/\sqrt{\epsilon})$ when $f$ is smooth and strongly-convex-linear\footnote{$f(\vx,\vy)$ is strongly-convex-linear if $f(\cdot,\vy)$ is strongly convex for each fixed $\vy$ and $f(\vx,\cdot)$ is linear for each fixed $\vx$. }. Some of the existing results as well as ours are summarized in Table~\ref{tab:first_order}.

\myalert{Second-order and higher-order methods.}
Unlike first-order methods, relatively few methods have been proposed to utilize second-order and higher-order information in saddle point problems. So far, there have been two separate approaches in the literature: one is to generalize the well-studied second-order methods for optimization problems, in particular the cubic regularized Newton's method \cite{Nesterov2006,Nesterov2008}; and the other is to generalize the first-order extragradient method \cite{Korpelevich1976,Nemirovski2004}. In this work, we offer a third alternative with comparable and sometimes better convergence properties. 

A celebrated technique of globalizing Newton's method for unconstrained optimization problems is the cubic regularization scheme proposed in \cite{Nesterov2006} with superior global complexity bounds \cite{Nesterov2008,Cartis2011}. In \cite{Nesterov2006a}, Nesterov further extended this methodology to solving variational inequalities. For monotone operators with Lipschitz Jacobian, the proposed second-order method achieves a complexity bound of $\bigO(1/\epsilon)$. For strongly-monotone operators with Lipschitz Jacobian, a local quadratic convergence is established for the regularized Newton method, and an overall complexity of $\bigO(\kappa_2+\log\log(\epsilon^{-1}))$ is achieved by combining several techniques. %
More recently, a new cubic regularization scheme was proposed in \cite{Huang2022cubic} for unconstrained smooth saddle point problems. When the objective is strongly-convex-strongly-concave and both its gradient and Hessian are Lipschitz continuous, the authors showed a global linear convergence and a local quadratic convergence with an iteration complexity of $\bigO(\kappa^2_1+\kappa_1\kappa_2+\log\log (\epsilon^{-1}))$. Note that these complexity bounds are worse than the ones presented in this paper. 

\begin{table}[!t]
    \renewcommand{\arraystretch}{1.6}
    \centering
    \caption{Summary of second-order and higher-order methods for saddle point problems. We adopt similar notations as in Table~\ref{tab:first_order}. In addition, ``$L_2$'' means the Hessian of $f$ is $L_2$-Lipschitz and more generally ``$L_p$'' means the $p$-th-order derivative of $f$ is $L_p$-Lipschitz for some $p$. We note that the hc-CRN-SPP method in \cite{Huang2022cubic} also requires an additional error bound assumption, which we discuss in the main text. {Also, for our results, we assume the distance-generating function is smooth.} }\label{tab:high_order}
    \resizebox{\textwidth}{!}{%
        \begin{tabular}{|c|c|c|c|c|c|c|c|}
            \hline
			\multirow{2}{*}{Frameworks}			    & \multirow{2}{*}{Methods} 		                                            & \multirow{2}{*}{Composite objective?}                                      & \multicolumn{2}{c|}{Assumptions}					    & \multirow{2}{*}{Constrained?} 	& \multirow{2}{*}{Distance} & \multirow{2}{*}{Complexity}                                                                   \\ \cline{4-5}
                                            	    &                             	                                            &                                                         & Convexity                     & Smoothness            &									&                       &                                                                                               \\ \hline
			\multirow{4}{*}{Cubic Regularization}   & {Dual Newton's method \cite{Nesterov2006a}}                               & \xmark                            & C-C                           & $L_2$                 & \xmark                              & Euclidean             & $\bigO(\epsilon^{-1})$                                                                        \\ \cline{2-8}
                                                    & {Restarted dual Newton's method \cite{Nesterov2006a}}                     & \xmark                            & SC-SC                         & $L_2$                 & \xmark                              & Euclidean             & $\bigO(\kappa_2+\log\log (\epsilon^{-1}))$                                                      \\ \cline{2-8}
                                                    & {hc-CRN-SPP \cite{Huang2022cubic}}                                             & \xmark                            & C-C                           & $L_1,L_2$             & \xmark                              & Euclidean             & {$\bigO(\log (\epsilon^{-1}))$ or $\bigO(\epsilon^{-\frac{1-\theta}{\theta^2}})$}    \\ \cline{2-8}
                                                    & {CRN-SPP} \cite{Huang2022cubic}                                                & \xmark                            & SC-SC                         & $L_1,L_2$             & \xmark                              & Euclidean             & $\bigO(\kappa^2_1+\kappa_1\kappa_2+\log\log (\epsilon^{-1}))$                                   \\ \hhline{*{8}{=}}
			\multirow{3}{*}{Extragradient}		    & {Newton proximal extragradient \cite{Monteiro2012}}                       & \tickmark                 & C-C                           & $L_2$                 & \tickmark                            & Euclidean             & $\bigO(\epsilon^{-\frac{2}{3}})$                                                              \\ \cline{2-8}
                                                    & {High-order Mirror-Prox \cite{Bullins2022higher}}                               & \xmark                            & C-C                           & $L_p$ ($p\geq 2$)     & \tickmark                            & Bregman               & $\bigO(\epsilon^{-\frac{2}{p+1}})$                                                            \\ \cline{2-8}
                                                    & {Restarted high-order Mirror-Prox \cite{Ostroukhov2020}}                  & \xmark                            & SC-SC                         & $L_p$  ($p\geq 2$)    & \xmark                              & Euclidean             & $\bigO(\kappa_p^{\frac{2}{p+1}}\log (\epsilon^{-1}))$                                                  \\ \hhline{*{8}{=}}
            \multirow{2}{*}{Optimistic}     		& \textbf{Theorems~\ref{thm:OMD_2nd_ls_convex} \&~\ref{thm:OMD_high_ls_convex}}                                                         & \tickmark                 & C-C                           & $L_p$  ($p\geq 2$)    & \tickmark                           & \makecell{Bregman,\\ \blue{smooth}}             & $\bigO(\epsilon^{-\frac{2}{p+1}})$                                                            \\ \cline{2-8}
                                                    & \textbf{Corollaries~\ref{coro:2nd_complexity_bound} \&~\ref{coro:high_complexity_bound}}                                                         & \tickmark                 & SC-SC                         & $L_p$ ($p\geq 2$)     & \tickmark                           & \makecell{Bregman,\\\blue{smooth}}            & $\bigO(\kappa_p^{\frac{2}{p+1}}+\log\log (\epsilon^{-1}))$                                      \\ \hline
        \end{tabular}%
    }
\end{table}

Along another line of research, a Newton-type extragradient method was analyzed in \cite{Monteiro2010,Monteiro2012} for solving monotone inclusion problems under the HPE framework \cite{Solodov1999}, and was shown to achieve an iteration complexity of $\bigO(\epsilon^{-\frac{2}{3}})$. %
More recently, the authors in \cite{Bullins2022higher} proposed second-order and higher-order methods by directly extending the analysis of the classical mirror-prox method \cite{Nemirovski2004}. They proved that the proposed $p$-th-order method enjoys a complexity bound of $\bigO(\epsilon^{-\frac{2}{p+1}})$ for smooth convex-concave saddle point problems with Lipschitz $p$-th-order derivative. 
Later, the authors in \cite{Ostroukhov2020} built upon this work and proposed a restarted high-order mirror-prox method in the strongly-convex-strongly-concave setting, achieving a complexity bound of $\bigO(\kappa_p^{\frac{2}{p+1}}\log (\epsilon^{-1}))$. 
Under the additional assumption that the gradient of $f$ is Lipschitz, they further combined the restarted scheme with the cubic regularized Newton method in \cite{Huang2022cubic}, leading to an overall complexity bound of $\bigO(\kappa_p^{\frac{2}{p+1}}\log (\kappa_1 \kappa_2)+\log\log(\epsilon^{-1}))$. While these works achieve 
comparable iteration complexities as ours, we remark that their overall complexity is worse due to the additional cost caused by their line search scheme.  
Specifically, the proposed two-stage line search subroutine in \cite{Monteiro2012} requires $O(\log\log(\epsilon^{-1}))$ calls to a subproblem solver per iteration. A simpler binary search subroutine is also considered in \cite{Bullins2022higher} with a higher per-iteration cost of $\mathcal{O}(\log(\epsilon^{-1}))$. In comparison, we prove that our line search scheme on average requires a constant number of calls to a subproblem solver per iteration, thus shaving the additional factor of $\bigO(\log\log(\epsilon^{-1}))$ from the computational complexity.
Some existing results as well as ours are summarized in Table~\ref{tab:high_order}.

{{{After the initial version of this paper was released}}, there have been new developments regarding second-order and higher-order methods for saddle point problems \cite{adil2022optimal,lin2022perseus,lin2022explicit, huang2022approximation,alves2023search}. Specifically, in order to eliminate the need of a line search scheme, the methods proposed in \cite{adil2022optimal,lin2022perseus,lin2022explicit,huang2022approximation} combine the idea of cubic regularization~\cite{Nesterov2006} with either mirror-prox~\cite{Nemirovski2004} or dual extrapolation~\cite{Nesterov2007a}.  
In the convex-concave setting, their $p$-th-order method all achieve a complexity of $\mathcal{O}(\epsilon^{-\frac{2}{p+1}})$, which matches the lower bound proved in \cite{adil2022optimal,lin2022perseus} under certain assumptions (see \cite{lin2022perseus} for details). In the strongly-convex-strongly-concave setting, the authors in \cite{lin2022perseus} further proposed a restart scheme to obtain a global complexity of $\bigO(\kappa_p^{\frac{2}{p+1}}\log(\epsilon^{-1}))$ as well as local superlinear convergence. Later, a similar restart scheme is also studied in \cite{huang2022approximation} with a more refined analysis, leading to an improved global complexity of $\bigO(\kappa_p^{\frac{2}{p+1}}\log\log(\epsilon^{-1}))$. 
However, while these methods do not require an explicit line search subroutine, in each iteration they need to solve one nontrivial $(p+1)$-th-order regularization subproblem, which can be computational costly. For instance, in the unconstrained setting, the authors in \cite{adil2022optimal} proposed a binary search scheme for their second-order method using $\bigO(\log(\epsilon^{-1}))$ calls to a linear system solver. 
In comparison, on average each iteration of our second-order method requires solving a constant number of subproblems, which are in the form of affine variational inequalities and further reduce to linear systems of equations in the unconstrained setting. Hence, the overall complexity of our second-order method can be advantageous at least in the unconstrained setting.

Along another line of research, the concurrent work in \cite{alves2023search} built on the large-step HPE framework \cite{Monteiro2012,svaiter2023complexity} and proposed a new line-search free second-order method. Using the same subproblem solver as ours, it is shown to achieve an overall complexity of $\bigO({\epsilon}^{-\frac{2}{3}}+\log(\epsilon^{-1}))$ for monotone variational inequalities. On the other hand, we note that their approach requires prior knowledge of the Lipschitz constant of the Hessian, which is not needed in our line search scheme. Moreover, our analysis covers both the convex-concave and strongly-convex-strongly-concave cases, while it remains unclear how to extend the method in \cite{alves2023search} to the latter setting. 
}

\noindent\textbf{Outline.} The rest of the paper is organized as follows. In Section~\ref{sec:preliminaries}, we present some definitions and preliminaries on monotone operator theory and saddle point problems. In Section~\ref{sec:BPP}, we review the classical Bregman PPM and show how some popular first-order methods can be viewed as its approximations. We propose our generalized optimistic method and the backtracking line search scheme in Section~\ref{sec:GOMD}. The first-order, second-order and higher-order optimistic method are discussed in Sections~\ref{sec:first_order}-\ref{sec:high_order}, respectively. We report our numerical results in Section~\ref{sec:numerical}. Finally, we conclude the paper with some additional remarks in Section~\ref{sec:conclusions}. 
\vspace{-2mm}
\section{Preliminaries}\label{sec:preliminaries}
\vspace{-1mm}

\vspace{-1mm}
\subsection{Bregman distances and strong convexity}
\vspace{-1mm}

For an arbitrary norm $\|\cdot\|$, we denote its dual norm by $\|\cdot\|_{*}$. To deal with general non-Euclidean norms, we use the \emph{Bregman distance} to measure the proximity between two points, {defined as $D_{\Phi}(\vx',\vx) := \Phi(\vx')-\Phi(\vx)-\langle\nabla \Phi(\vx), \vx'-\vx\rangle$,
where the \emph{distance-generating function} $\Phi$ is differentiable and strictly convex.} {When} $\Phi$ is the squared Euclidean norm $\frac{1}{2}\|\cdot\|^2_2$, the Bregman distance is given by $D_{\Phi}(\vx',\vx)=\frac{1}{2}\|\vx'-\vx\|_2^2$. In the following, we refer to this important special case as the ``Euclidean setup''. 
{The main motivation of using non-Euclidean norms and Bregman distances is to achieve a better dependence on the dimension by adapting to the geometry of the problem. 
This technique is particularly useful when the variables are constrained to the simplex $\mathrm{\Delta}_n=\{\vx\in \mathbb{R}^n_+:\sum_{i=1}^n x_i =1\}$ or the spectrahedron $\mathcal{S}_n = \{\mX\in \mathbb{S}_+^n: \mathrm{Tr}(\mX)=1\}$. In the former case, we could use the $\ell_1$-norm and the negative entropy $\Phi(\vx)=\sum_{i} x_i \log x_i$, while in the latter case we could use the nuclear norm and the negative von Neumann entropy $\Phi(\mX)=\sum_{i=1}^n \lambda_i(\mX) \log \lambda_i(\mX)$, where $\lambda_i(\mX)$ denotes the $i$-th eigenvalue of the matrix $\mX$.
Such constraints can arise naturally in several typical applications of minimax optimization such as matrix game and semidefinite programming; see \cite{Nemirovski2004} for detailed discussions.} 
Using Bregman distance, one can define a more general notion of strong convexity (e.g., \cite{Bauschke2017,Lu2018}). 

\vspace{-1mm}
\begin{definition}\label{def:strong_convex_Bregman}
    A differentiable function $g$  is \emph{$\mu$-strongly convex} w.r.t.\@ a convex function $\Phi$ on a convex set $\mathcal{X}$ if 
    $
        g(\vx')\geq g(\vx)+\langle\nabla g(\vx),\vx'-\vx\rangle+\mu D_{\Phi}(\vx',\vx),
   $ for all $\vx,\vx'\in \mathcal{X}$.
    Furthermore,
    $g$ is \emph{$\mu$-strongly concave} if $-g$ is $\mu$-strongly convex. 
\end{definition}

In the Euclidean setup, the strong convexity defined in Definition~\ref{def:strong_convex_Bregman} coincides with the standard definition of strong convexity. %
Also note that by setting $\mu=0$ in the above inequality, we recover the definition for a convex function $g$. Similarly, we state that $g$ is concave if $-g$ is convex.

\vspace{-2mm}
\subsection{Monotone operators and Lipschitz and smooth operators}
\label{subsec:monotone}

\vspace{-1mm}
Since our convergence analysis relies heavily on the monotone operator theory, next we review some of its basic concepts. A \emph{set-valued operator} $T:\mathbb{R}^d \rightrightarrows\mathbb{R}^d$ maps a point $\vz\in \mathbb{R}^d$ to a 
(possibly empty) subset of $\mathbb{R}^d$, and its domain is given by $\dom T=\{\vz\in \mathbb{R}^d:\; T(\vz)\neq \emptyset\}$. Alternatively, we can identify $T$ with its graph defined as 
 $  \mathrm{Gr}(T) = \{(\vz,\vv):\;\vv\in T(\vz)\}\subset \mathbb{R}^d \times \mathbb{R}^d$.
With these notations we introduce the class of monotone operators, which is closely related to convex functions. 

\vspace{-1mm}
\begin{definition}
    A set-valued operator $T:\mathbb{R}^d \rightrightarrows \mathbb{R}^d$ is \emph{monotone} if {$\langle \vv-\vv', \vz-\vz' \rangle \geq 0$ for any $(\vz,\vv),(\vz',\vv')\in \mathrm{Gr}(T)$.} 
    Also, a monotone operator $T$ is \emph{maximal} if there is no other monotone operator $S$ that 
$\mathrm{Gr}(T) \subsetneq \mathrm{Gr}(S)$.
\end{definition}
Similar to Definition~\ref{def:strong_convex_Bregman}, we introduce a stronger notion of monotonicity using Bregman distance.
\begin{definition}
    A set-valued operator $T:\mathbb{R}^d \rightrightarrows \mathbb{R}^d$ is \emph{$\mu$-strongly monotone} w.r.t.\@ a convex function $\Phi$ if for any $ (\vz,\vv),(\vz',\vv')\in \mathrm{Gr}(T)$, we have {$\langle \vv-\vv', \vz-\vz' \rangle \geq \mu \langle \nabla\Phi(\vz)-\nabla\Phi(\vz'), \vz-\vz' \rangle$.}
\end{definition}

 For a smooth operator $F:\dom F\rightarrow \mathbb{R}^d$ defined on an open domain $\dom F\subset \mathbb{R}^d$, we use the notation $
    D^{p}F(\vz)[\vh_1,\vh_2,\ldots,\vh_p]\in \mathbb{R}^d
$ to denote the $p$-th-order directional derivative of $F$ at point $\vz$ along the directions $\vh_1,\ldots,\vh_p\in \mathbb{R}^d$. Moreover, when $\vh_1,\ldots,\vh_p$ are all identical, we abbreviate the directional derivative as $D^{p}F(\vz)[\vh]^p$. Note that the $p$-th-order derivative tensor $D^{p}F(\vz)[\cdot]$ is a multilinear map, and it coincides with the Jacobian of $F$ when $p=1$. Recalling that $\|\cdot\|_*$ is the dual norm of $\|\cdot\|$, we define the \emph{operator norm} of the tensor induced by a norm $\|\cdot\|$ on $\mathbb{R}^d$ by 
\begin{equation*}
    \|D^{p}F(\vz)\|_{\op}=\max_{\|\vh_i\|=1,i=1,\ldots,p} \|D^{p}F(\vz)[\vh_1,\ldots,\vh_p]\|_{*}.
\end{equation*}
In this paper, we focus on Lipschitz continuous and smooth operators, defined as follows. 
\begin{definition}\label{def:smooth}
    An operator $F:\dom F\rightarrow \mathbb{R}^d$ is \emph{$L_1$-Lipschitz} if {$\|F(\vz)-F(\vz')\|_{*} \leq L_1\|\vz-\vz'\|$ for any $\vz,\vz'\in \dom F$.}
    Moreover, for $p\geq 2$, $F$ is \emph{$p$-th-order $L_p$-smooth} if {$\|D^{p-1}F(\vz)-D^{p-1}F(\vz')\|_{\op} \leq L_p\|\vz-\vz'\|$ for any $\vz,\vz'\in \dom F$.} 
\end{definition}

We define the \emph{$p$-th-order Taylor expansion} of $F$ at point $\vz$ by   
\begin{equation}\label{eq:taylor_expansion}
    T^{(p)}(\vz';\vz) = F(\vz)+\sum_{i=1}^p \frac{1}{i!}D^{i}F(\vz)[\vz'-\vz]^i.
\end{equation}
For a $p$-th-order smooth operator $F$, its $(p-1)$-th-order Taylor expansion can be considered as its approximation. Specifically, if $F$ is $p$-th-order $L_p$-smooth, we have
\begin{align}
    \|F(\vz')-T^{(p-1)}(\vz';\vz)\|_{*} &\leq \frac{L_{p}}{p!}\|\vz-\vz'\|^p,\qquad \forall \vz,\vz'\in \dom F; \label{eq:residual_F}\\
    \|DF(\vz')-DT^{(p-1)}(\vz';\vz)\|_{\op} &\leq \frac{L_{p}}{(p-1)!}\|\vz-\vz'\|^{p-1},\qquad \forall \vz,\vz'\in \dom F.\label{eq:residual_DF}
\end{align}

\vspace{-3mm}
\subsection{Saddle point problems}\label{subsec:saddle_point_problem}
Recall the convex-concave minimax problem defined in \eqref{eq:minimax}. Its optimal solution $(\vx^*,\vy^*)$ is also called a \emph{saddle point} of the objective function $\ell$, as it satisfies $
    \ell(\vx^*,\vy)\leq \ell(\vx^*,\vy^*)\leq \ell(\vx,\vy^*)$ for any $ \vx\in \mathcal{X}$ and any $\vy\in \mathcal{Y}$.
Throughout the paper, we assume that Problem \eqref{eq:minimax} has at least one saddle point in $\mathcal{X} \times \mathcal{Y}$. 
There are several ways to reformulate the saddle point problem. To simplify the notation, let $\vz:=(\vx,\vy)\in \mathbb{R}^{m+n}$ and $\mathcal{Z} := \mathcal{X} \times \mathcal{Y}$. Define the operators 
$F: \mathcal{Z} \rightarrow \mathbb{R}^{m+n}$ and $H: \mathbb{R}^{m+n} \rightrightarrows \mathbb{R}^{m+n}$ by 
\begin{equation}\label{eq:def_of_F_H}
    F(\vz) := 
    \begin{bmatrix}
        \nabla_{\vx} f(\vx,\vy) \\
        -\nabla_{\vy} f(\vx,\vy)
    \end{bmatrix}
    \;\text{and}\;
    H(\vz) := 
    \begin{bmatrix}
        \partial h_1(\vx) \\
        \partial h_2(\vy)
    \end{bmatrix}+
    \begin{bmatrix}
        N_\mathcal{X}(\vx) \\
        N_\mathcal{Y}(\vy)
    \end{bmatrix},
\end{equation} 
{where $\partial h_1$ and $\partial h_2$ denote the subdifferentials of $h_1$ and $h_2$, respectively, and $N_\mathcal{X}$ and $N_\mathcal{Y}$ denote the normal cone operators of $\mathcal{X}$ and $\mathcal{Y}$, respectively.} 
Then by the first-order optimality condition, $\vz^*$ is a saddle point of \eqref{eq:minimax} if and only if it solves the following monotone inclusion problem: 
\begin{equation}\label{eq:monotone_inclusion}
    \text{find}\;\vz^*\in \mathbb{R}^{m+n}\;
    \text{such that}\;
    0\in F(\vz^*)+H(\vz^*).
\end{equation}
Moreover, if we define $h(\vz):=h_1(\vx)+h_2(\vy)$, \eqref{eq:minimax} is also equivalent to the variational inequality: 
\begin{equation}\label{eq:VI}
    \text{find}\;\vz^*\in \mathcal{Z}\;
    \text{such that}\;
    \langle F(\vz^*),\vz-\vz^* \rangle+h(\vz)-h(\vz^*)\geq 0,\;\forall \vz\in \mathcal{Z}.
\end{equation}

We assume that $\mathcal{X}$ and $\mathcal{Y}$ are equipped with the norms $\|\cdot\|_{\mathcal{X}}$ and $\|\cdot\|_{\mathcal{Y}}$, respectively. Moreover, we have two distance-generating functions --- a function 
$\Phi_{\mathcal{X}}:\mathcal{X}\rightarrow \mathbb{R}$ that is $1$-strongly convex w.r.t.\@ $\|\cdot\|_{\mathcal{X}}$ and a function $\Phi_{\mathcal{Y}}:\mathcal{Y}\rightarrow \mathbb{R}$ that is $1$-strongly convex w.r.t.\@ $\|\cdot\|_{\mathcal{Y}}$. We endow $\mathcal{Z}=\mathcal{X}\times \mathcal{Y}$ with the norm $\|\cdot\|_{\mathcal{Z}}$ defined by 
   $ \|\vz\|_{\mathcal{Z}} := \sqrt{\|\vx\|_{\mathcal{X}}^2+\|\vy\|_{\mathcal{Y}}^2}$ for all $\vz=(\vx,\vy)\in \mathcal{Z}$,
and with the distance-generating function $\Phi_{\mathcal{Z}}:\mathcal{Z} \rightarrow \mathbb{R}$ defined by  
$\Phi_{\mathcal{Z}}(\vz) := \Phi_{\mathcal{X}}(\vx)+\Phi_{\mathcal{Y}}(\vy)$. 
It can be verified that 
\begin{equation}\label{eq:mirror_map}
    D_{\Phi_{\mathcal{Z}}}(\vz',\vz) = D_{\Phi_{\mathcal{X}}}(\vx',\vx)+D_{\Phi_{\mathcal{Y}}}(\vy',\vy),
\end{equation}
and $\Phi_{\mathcal{Z}}$ is $1$-strongly convex w.r.t.\@ $\|\cdot\|_{\mathcal{Z}}$. To simplify the notation, we omit $\mathcal{Z}$ in the subscripts of $\|\cdot\|_{_{\mathcal{Z}}}$ and $\Phi_{\mathcal{Z}}$ when there is no ambiguity. 

In this paper, we consider two specific problem classes for our theoretical results: (i) convex-concave saddle point problems (Assumption~\ref{assum:monotone}) and (ii) strongly-convex-strongly-concave saddle point problems (Assumption~\ref{assum:strongly_monotone}). Note that Assumption~\ref{assum:strongly_monotone} reduces to Assumption~\ref{assum:monotone} when $\mu=0$, and sometimes this allows us to unify our results in both settings by considering $\mu\geq 0$. 
\begin{assumption}\label{assum:monotone}
The functions $h_1$ and $h_2$ are proper closed convex, and the function $f$ is convex in $\vx$ and concave in $\vy$ on $\mathcal{X} \times \mathcal{Y}$. %
\end{assumption}
\begin{assumption}\label{assum:strongly_monotone}
The functions $h_1$ and $h_2$ are proper closed convex, and
the function $f$ is $\mu$-strongly convex in $\vx$ w.r.t.\@ $\Phi_{\mathcal{X}}$ and 
$\mu$-strongly concave in $\vy$ w.r.t.\@ $\Phi_{\mathcal{Y}}$ on $\mathcal{X}\times \mathcal{Y}$. %
\end{assumption}

As mentioned in Section~\ref{subsec:monotone}, we will mainly work with the operator $F$ to develop our algorithms and convergence proofs in this paper. The following lemma serves the purpose of translating the properties of the function $f$ into the properties of the operator $F$ (see Appendix~\ref{appen:function_operator} for the proof). 

\vspace{-1mm}
\begin{lemma}\label{lem:function_operator}
    The operator $F$ defined in \eqref{eq:def_of_F_H} is monotone on $\mathcal{Z}$ under Assumption~\ref{assum:monotone}, and is $\mu$-strongly monotone w.r.t. $\Phi_{\mathcal{Z}}$ on $\mathcal{Z}$ under Assumption~\ref{assum:strongly_monotone}.
\end{lemma}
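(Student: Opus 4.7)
The plan is to prove the lemma directly from the definition of (strong) monotonicity, namely to show
\[
\langle F(\vz)-F(\vz'),\vz-\vz'\rangle \;\geq\; \mu\bigl(D_{\Phi}(\vz',\vz)+D_{\Phi}(\vz,\vz')\bigr)
\qquad\forall\,\vz,\vz'\in\mathcal{Z},
\]
with $\mu=0$ giving monotonicity under Assumption~\ref{assum:monotone} and $\mu>0$ giving strong monotonicity under Assumption~\ref{assum:strongly_monotone}. I will handle both cases simultaneously by letting $\mu\geq 0$. Writing $\vz=(\vx,\vy)$ and $\vz'=(\vx',\vy')$, the left-hand side expands, by the definition of $F$ in~\eqref{eq:def_of_F_H}, to
\[
\langle \nabla_{\vx}f(\vx,\vy)-\nabla_{\vx}f(\vx',\vy'),\vx-\vx'\rangle
-\langle \nabla_{\vy}f(\vx,\vy)-\nabla_{\vy}f(\vx',\vy'),\vy-\vy'\rangle,
\]
and the right-hand side equals $\mu\bigl(D_{\Phi_{\mathcal{X}}}(\vx',\vx)+D_{\Phi_{\mathcal{X}}}(\vx,\vx')+D_{\Phi_{\mathcal{Y}}}(\vy',\vy)+D_{\Phi_{\mathcal{Y}}}(\vy,\vy')\bigr)$ by the additivity identity~\eqref{eq:mirror_map}.

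The key step is to produce the required inequality by cleverly summing four first-order inequalities that come from the (strong) convex-concave structure of $f$. Specifically, from Assumption~\ref{assum:strongly_monotone} (which covers Assumption~\ref{assum:monotone} with $\mu=0$), the $\mu$-strong convexity of $f(\cdot,\vy)$ w.r.t.\ $\Phi_{\mathcal{X}}$ applied at the pair $(\vx,\vx')$ and of $f(\cdot,\vy')$ applied at the pair $(\vx',\vx)$ gives the two inequalities
\begin{align*}
f(\vx',\vy) &\geq f(\vx,\vy)+\langle \nabla_{\vx}f(\vx,\vy),\vx'-\vx\rangle+\mu D_{\Phi_{\mathcal{X}}}(\vx',\vx),\\
f(\vx,\vy') &\geq f(\vx',\vy')+\langle \nabla_{\vx}f(\vx',\vy'),\vx-\vx'\rangle+\mu D_{\Phi_{\mathcal{X}}}(\vx,\vx'),
\end{align*}
while the $\mu$-strong concavity of $f(\vx,\cdot)$ and $f(\vx',\cdot)$ w.r.t.\ $\Phi_{\mathcal{Y}}$ yields
\begin{align*}
f(\vx,\vy') &\leq f(\vx,\vy)+\langle \nabla_{\vy}f(\vx,\vy),\vy'-\vy\rangle-\mu D_{\Phi_{\mathcal{Y}}}(\vy',\vy),\\
f(\vx',\vy) &\leq f(\vx',\vy')+\langle \nabla_{\vy}f(\vx',\vy'),\vy-\vy'\rangle-\mu D_{\Phi_{\mathcal{Y}}}(\vy,\vy').
\end{align*}

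I would then add the first two inequalities and subtract the last two. On the left-hand side the cross function values $f(\vx',\vy)$ and $f(\vx,\vy')$ cancel, and on the right-hand side so do $f(\vx,\vy)$ and $f(\vx',\vy')$, leaving only the inner-product and Bregman terms. After rearranging using $\langle \nabla_{\vx}f(\vx,\vy),\vx'-\vx\rangle=-\langle \nabla_{\vx}f(\vx,\vy),\vx-\vx'\rangle$ and similarly for the $\vy$-terms, the surviving inequality is precisely
\[
\langle \nabla_{\vx}f(\vx,\vy)-\nabla_{\vx}f(\vx',\vy'),\vx-\vx'\rangle
-\langle \nabla_{\vy}f(\vx,\vy)-\nabla_{\vy}f(\vx',\vy'),\vy-\vy'\rangle
\geq \mu\bigl(D_{\Phi}(\vz',\vz)+D_{\Phi}(\vz,\vz')\bigr),
\]
which is the desired bound after applying~\eqref{eq:mirror_map}.

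There is no real obstacle here; the only subtlety is bookkeeping the signs and choosing the right four inequalities so that the cross function values $f(\vx',\vy)$ and $f(\vx,\vy')$ (which are never directly bounded in the desired expression) cancel out of the telescoping sum. Setting $\mu=0$ recovers plain monotonicity under Assumption~\ref{assum:monotone}, and $\mu>0$ gives strong monotonicity w.r.t.\ $\Phi_{\mathcal{Z}}$ under Assumption~\ref{assum:strongly_monotone}, completing the proof.
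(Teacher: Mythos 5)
Your proof is correct and follows essentially the same route as the paper's: both unify the two cases by treating $\mu=0$ as a special case, apply the four first-order inequalities from strong convexity in $\vx$ and strong concavity in $\vy$ at the swapped arguments, and sum so that the cross terms $f(\vx',\vy)$ and $f(\vx,\vy')$ cancel. The only cosmetic difference is that you write the concavity inequalities with $\leq$ and subtract, whereas the paper negates $f$ and adds all four; the algebra is identical.
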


\vspace{-1mm}
In the convex-concave case, we measure the suboptimality of a solution $(\vx,\vy)$ by
    $\Delta(\vx,\vy) := 
    \max_{\vy'\in \mathcal{Y}} \ell(\vx,\vy')-\min_{\vx'\in \mathcal{X}} \ell(\vx',\vy)$,
which is known as the \textit{primal-dual gap}. 
We note that $\Delta(\vx,\vy)\geq 0$ for any $(\vx,\vy)\in \mathcal{X}\times \mathcal{Y}$ and the equality holds if and only if $(\vx,\vy)$ is a saddle point. However, in some cases where the feasible sets are unbounded, the primal-dual gap can be always infinite except at the saddle points\footnote{For example, consider the unconstrained saddle point problem $\min_{\vx\in \mathbb{R}^m} \max_{\vy \in \mathbb{R}^m} \langle \vx,\vy \rangle$. Then the primal-dual gap is infinite except at the unique saddle point $(0,0)$.}, rendering it useless. One remedy is to use the restricted primal-dual gap function \cite{Nesterov2007a,Chambolle2011}. Given two bounded sets $\mathcal{B}_1\subset \mathcal{X}$ and $\mathcal{B}_2\subset \mathcal{Y}$,  
we define the \emph{restricted primal-dual gap} as
\begin{equation}\label{eq:restricted_gap_function}
    \Delta_{\mathcal{B}_1 \times \mathcal{B}_2}(\vx,\vy) := 
    \max_{\vy'\in \mathcal{B}_2 } \ell(\vx,\vy')-\min_{\vx'\in \mathcal{B}_1} \ell(\vx',\vy).
\end{equation}
As discussed in \cite{Chambolle2011}, the gap function has two properties: (i) If $(\vx^*,\vy^*)\in \mathcal{B}_1\times \mathcal{B}_2$, then we have $\Delta_{\mathcal{B}_1 \times \mathcal{B}_2}(\vx,\vy)\geq 0$ for any $(\vx,\vy)\in \mathcal{X}\times \mathcal{Y}$; (ii) If $(\vx,\vy)$ is in the interior of $\mathcal{B}_1\times \mathcal{B}_2$,
    then $\Delta_{\mathcal{B}_1 \times \mathcal{B}_2}(\vx,\vy)= 0$ if and only if $(\vx,\vy)$ is a saddle point.  %
As we shall show later, the iterates of our algorithm stay in a bounded set centered at $(\vx^*,\vy^*)$. Hence, the restricted primal-dual gap function will serve as a good measure of suboptimality if we choose $\mathcal{B}_1 \times \mathcal{B}_2$ large enough such that it contains a saddle point $(\vx^*,\vy^*)$.

The following classical result is the first step of our analysis, providing an upper bound on the primal-dual gap at the averaged iterate. The proof is in Appendix~\ref{appen:function_operator}.
\vspace{-1mm}
\begin{lemma}\label{lem:duality_gap}
    Suppose that Assumption~\ref{assum:monotone} holds. Then for any 
    $\vz,\vz_1,\ldots,\vz_N\in \mathcal{Z}$ and $\theta_1,\ldots,\theta_N\geq 0$ with 
    $\sum_{k=1}^N \theta_k=1$, we have {$\ell(\bar{\vx}_N,\vy)-\ell(\vx,\bar{\vy}_N) \leq \sum_{k=1}^N \theta_k
    ( \langle F(\vz_k), \vz_k-\vz \rangle
    + h(\vz_k)-h(\vz))$},
    where %
    $\bar{\vx}_N = \sum_{k=1}^N \theta_k \vx_k$ and 
    $\bar{\vy}_N = \sum_{k=1}^N \theta_k \vy_k$. 
\end{lemma}
\vspace{-1mm}
\begin{remark}
    For brevity, we report our convergence results in the form of an upper bound on $\ell(\bar{\vx}_N,\vy)-\ell(\vx,\bar{\vy}_N)$ for \emph{any} $(\vx,\vy)\in \mathcal{X}\times \mathcal{Y}$ (e.g., see Proposition~\ref{prop:BPP}(a)). Then taking the supremum of both sides over $\vx\in \mathcal{B}_1$ and $\vy\in \mathcal{B}_2$ results in an upper bound on the restricted primal-dual gap in \eqref{eq:restricted_gap_function}. 
\end{remark}
\vspace{-1mm}
In the strongly-convex-strongly-concave setting, Problem \eqref{eq:minimax} has a unique saddle point $\vz^*\in \mathcal{Z}$, and we measure the suboptimality of $\vz$ by the Bregman distance $D_{\Phi}(\vz^*,\vz)$. The following lemma presents the key property we shall use in our convergence analysis. The proof is available in Appendix~\ref{sec:proof_of_lem:saddle_point}.
\vspace{-1mm}
\begin{lemma}\label{lem:saddle_point}
    If Assumption~\ref{assum:strongly_monotone} holds with $\mu \geq 0$ and $\vz^*$ is a saddle point of Problem~\eqref{eq:minimax}, then for any $\vz\in \mathcal{Z}$, we have $\langle F(\vz), \vz-\vz^* \rangle +h(\vz)-h(\vz^*) \geq \mu D_{\Phi}(\vz^*,\vz)$.
\end{lemma}

\begin{remark}\label{rem:symmetry} 
    {The \emph{symmetry coefficient}  of $\Phi$   is defined  as 
    $%
        s(\Phi) := \inf_{\vx,\vx'\in \mathrm{int}\dom \Phi} \frac{D_{\Phi}(\vx,\vx')}{D_{\Phi}(\vx',\vx)}
        \in [0,1]
    $ in \cite{Bauschke2017},
    where $\mathrm{int}\dom \Phi$ is the interior of the domain of $\Phi$. In particular, we have $s(\Phi)=1$ when $\Phi=\frac{1}{2}\|\cdot\|_2^2$.}
     It is not hard to see from the proof of Lemma~\ref{lem:saddle_point} that {the right-hand side of the inequality} can be improved to $(1+s(\Phi))\mu D_{\Phi}(\vz^*,\vz)$.
    For simplicity, we omit the constant $1+s(\Phi)$ when reporting our main convergence results, but we will take it into consideration in Section~\ref{sec:numerical} for a more accurate comparison between our theoretical analysis and numerical results. Moreover, our convergence results still hold if we replace Assumption~\ref{assum:strongly_monotone} with the weaker assumption that the inequality in {Lemma~\ref{lem:saddle_point}} is always satisfied, which is referred to as the generalized monotonicity condition in \cite{kotsalis2022simple}.
\end{remark}

\section{Bregman proximal point method and its approximations}\label{sec:BPP}
As mentioned in the introduction, %
the generalized optimistic method we propose in this paper can be interpreted as a systematic approach to approximating the classical PPM. Hence, to better motivate our method, we first review the basics of the Bregman PPM \cite{Chen1993,Eckstein1993}. In the $k$-th iteration of the Bregman PPM for solving Problem~\eqref{eq:minimax}, the new point $\vz_{k+1}$ is given by the unique solution of the monotone inclusion subproblem
\begin{equation}\label{eq:BPP_inclusion}
    0 \in \eta_kF(\vz)+\eta_k H(\vz) + \nabla \Phi (\vz)- \nabla \Phi (\vz_k),
\end{equation}
where $\eta_k>0$ is the ``step size''. Equivalently, we can also write
\begin{equation}\label{eq:BPP}
    \vz_{k+1} = \argmin_{\vz\in \mathcal{Z}} \big\{\eta_{k}\langle  F(\vz_{k+1}),\vz-\vz_k \rangle +\eta_k h(\vz) +D_{\Phi}(\vz,\vz_k)\big\}.
\end{equation}
Note, however, that \eqref{eq:BPP} is not an explicit update rule since its right-hand side also depends on $\vz_{k+1}$. 
Regarding its convergence property, we have the following results. The proofs are presented in Appendix~\ref{sec:proof_of_BPP}.
While the result for the convex-concave setting is well-known in the literature \cite{Nemirovski2004}, we note that the result for the strongly-convex-strongly-concave setting appears to be new\footnote{Rockafellar proved a similar result for the proximal point method in \cite{Rockafellar1976} in the {Euclidean} setup, while our result also applies to a general Bergman distance.}.
\begin{proposition}\label{prop:BPP}
    Let $\{\vz_k\}_{k\geq 0}$ be the iterates generated by the Bregman PPM in \eqref{eq:BPP_inclusion}. 
    \begin{enumerate}[label=(\alph*)]
        \item \label{item:BPP_a} Under Assumption~\ref{assum:monotone}, we have $D_{\Phi}(\vz^*,\vz_{k+1})\leq D_{\Phi}(\vz^*,\vz_k)$ for any $k \geq 0$.
        Moreover,  we have  
       $
            \ell(\bar{\vx}_N,\vy)-\ell(\vx,\bar{\vy}_N)
            \leq D_{\Phi}(\vz,\vz_0)\left(\sum_{k=0}^{N-1} \eta_k\right)^{-1} 
        $ for any $\vz = (\vx,\vy)\in \mathcal{X}\times \mathcal{Y}$, 
        where $\bar{\vz}_N = (\bar{\vx}_N,\bar{\vy}_N)$ is given by $\bar{\vz}_N=\frac{1}{\sum_{k=0}^{N-1} \eta_k}\sum_{k=0}^{N-1} \eta_k\vz_{k+1}$. 
        \item \label{item:BPP_b} Under Assumption~\ref{assum:strongly_monotone}, we have 
        $
            D_{\Phi}(\vz^*,\vz_N)\leq D_{\Phi}(\vz^*,\vz_0) \prod_{k=0}^{N-1} (1+\eta_k\mu)^{-1}.
        $
    \end{enumerate}
\end{proposition}
Proposition~\ref{prop:BPP} characterizes the convergence rates of the Bregman PPM in terms of the step sizes $\{\eta_k\}_{k\geq 0}$. 
We note that it requires no condition on the step sizes $\eta_k$, and hence in theory Bregman PPM can converge arbitrarily fast with sufficiently large step sizes. 
On the other hand, each step of Bregman PPM involves solving a highly nontrivial monotone inclusion problem in \eqref{eq:BPP_inclusion} and can be computationally intractable. Hence, our goal is to introduce a general class of optimistic methods that approximates the Bregman PPM by replacing \eqref{eq:BPP_inclusion} with more tractable subproblems. As we shall see, {our generalized optimistic method in Algorithm~\ref{alg:GOMD}} can achieve the same convergence guarantees (up to constant) as in Proposition~\ref{prop:BPP}, but {unlike Bregman PPM} it introduces additional restrictions on the {allowable step sizes}.  

To leverage the superior convergence performance of the Bregman PPM while maintaining computational tractability,  several iterative algorithms have been developed that aim at approximating Bregman PPM with explicit and efficient update rules. In particular, we describe two first-order methods, which represent the two different frameworks we reviewed in Section~\ref{subsec:related_work}: (i) The mirror-prox method and (ii) The optimistic mirror descent method. 

\vspace{2mm}
\noindent\textbf{Mirror-Prox method.} The key insight of the mirror-prox method %
\cite{Nemirovski2004} 
is that when $F$ is Lipschitz continuous,
we can approximately solve \eqref{eq:BPP} in two steps per iteration, with a properly chosen step size. Specifically, the mirror-prox update is given by 
\begin{equation}\label{eq:mirror_prox}
    \begin{aligned}
        \vz_{k+1/2} &= \argmin_{\vz\in \mathcal{Z}}\big\{\eta_k\langle  F(\vz_{k}),\vz-\vz_k \rangle + \eta_k h(\vz)+D_{\Phi}(\vz,\vz_k)\big\},\\
        \vz_{k+1} &= \argmin_{\vz\in \mathcal{Z}}\big\{\eta_k\langle  F(\vz_{k+1/2}),\vz-\vz_k \rangle +\eta_k h(\vz)+D_{\Phi}(\vz,\vz_k)\big\}.
    \end{aligned}
\end{equation}
In words, we first take a step of mirror descent to the middle point 
$\vz_{k+1/2}$, and then use $F(\vz_{k+1/2})$ as a surrogate for $F(\vz_{k+1})$ (c.f.~\eqref{eq:BPP}). In the Euclidean case, it boils down to the extragradient method \cite{Korpelevich1976}. %

\vspace{2mm}
\noindent\textbf{Optimistic mirror descent method.} Another approach for properly approximating proximal point updates is the optimistic mirror descent method, which dates back to \cite{Popov1980}. Later on, it also gained attention in the context of online learning \cite{Chiang2012,Rakhlin2013,Rakhlin13Optimization,joulani2020modular} and Generative Adversarial Networks (GANs) \cite{Daskalakis2018,gidel2018a,mertikopoulos2018optimistic,Liang2019,Peng2020}. Specifically, the update rule of this method for constrained saddle point problems is
\begin{equation}\label{eq:OMD_original}
    \begin{aligned}
        \vz_{k} &= \argmin_{\vz\in \mathcal{Z}}\big\{\eta_{k}\langle  F(\vz_{k-1}),\vz-\vz_{k-1/2} \rangle +D_{\Phi}(\vz,\vz_{k-1/2})\big\},\\
        \vz_{k+1/2} &= \argmin_{\vz\in \mathcal{Z}}\big\{\eta_{k}\langle  F(\vz_{k}),\vz-\vz_{k-1/2} \rangle +D_{\Phi}(\vz,\vz_{k-1/2})\big\}.
    \end{aligned}
\end{equation}
Comparing \eqref{eq:OMD_original} with \eqref{eq:mirror_prox}, we observe that optimistic mirror descent operates in a similar way as the mirror-prox method, but instead of computing $F(\vz_{k-1/2})$, we reuse the gradient $F(\vz_{k-1})$ from the previous iteration to obtain $\vz_k$. Hence, each iteration of this algorithm requires one gradient computation, unlike the mirror-prox update which requires two gradient evaluations per iteration.

The discussions above view the ``midpoints'' $\{\vz_{k+{1}/{2}}\}_{k\geq 0}$ as the approximate iterates of the Bregman PPM. Alternatively, we can also interpret $\{\vz_k\}_{k\geq 0}$ as the approximate iterates in the special setting of unconstrained saddle point problems in the Euclidean setup. Such viewpoint was first discussed in \cite{Mokhtari2020a}. 
In this case, the Bregman PPM in \eqref{eq:BPP_inclusion} can be written as the implicit update 
\begin{equation}\label{eq:BPP_simple}
    \vz_{k+1} = \vz_k-\eta_k F(\vz_{k+1}),
\end{equation}
while \eqref{eq:OMD_original} leads to the simple update rule 
\begin{equation}\label{eq:ogda_update}
    \vz_{k+1} = \vz_{k}-2\eta_{k} F(\vz_k)+\eta_{k} F(\vz_{k-1}) = \vz_{k}-\eta_{k} F(\vz_k)-\eta_{k}(F(\vz_{k})-F(\vz_{k-1})), 
\end{equation}
also known as OGDA %
\cite{Mokhtari2020a,Mokhtari2020}. 
To better illustrate the main idea, in \eqref{eq:ogda_update} we write the OGDA update as a combination of two terms: a ``prediction term'' $F(\vz_k)$ that serves as a surrogate for $F(\vz_{k+1})$, and a ``correction term'' $F(\vz_k)-F(\vz_{k-1})$ given by the deviation between $F(\vz_k)$ and its prediction in the previous iteration. If we make the optimistic assumption that the prediction error between two consecutive iterations does not change significantly, 
i.e., $F(\vz_{k+1})-F(\vz_{k})\approx F(\vz_{k})-F(\vz_{k-1})$, then it holds that $F(\vz_{k+1})\approx 2F(\vz_k)-F(\vz_{k-1})$ and we can expect \eqref{eq:ogda_update} to be a good approximation for \eqref{eq:BPP_simple}.  

Finally, we note that mirror-prox and optimistic mirror descent methods only use first-order information to approximate the Bregman PPM. It is natural to ask whether one can exploit higher-order information to improve the accuracy of approximation and hence achieve faster convergence. While previous works \cite{Monteiro2010,Monteiro2012,Bullins2022higher} have explored such possibility for the mirror-prox method, to the best of our knowledge, no similar effort has been made for the optimistic mirror descent method. Moreover, it remains unclear whether the current convergence analysis for the optimistic mirror decent method can be extended to more general composite saddle point problem. %
In this paper, we close these gaps and present a general framework for optimistic methods, following the viewpoint in \cite{Mokhtari2020a}. 
\section{Main algorithm: the generalized optimistic method}
\label{sec:GOMD}
In this section, we present the generalized optimistic method, which extends the ``optimistic'' idea described above in the OGDA method for approximating the Bregman PPM. Our proposed scheme is able to handle the composite structure in \eqref{eq:minimax} and can utilize second-order and higher-order information as we shall discuss in the upcoming sections.
To better explain the idea behind our generalized optimistic method, let us first focus on the unconstrained smooth saddle point problem. In the OGDA method defined in \eqref{eq:ogda_update}, the term $F(\vz_{k+1})$ is approximated by a linear combination of the prediction term $F(\vz_{k})$ and the correction term $F(\vz_k)- F(\vz_{k-1})$. 
Now instead of using $F(\vz_{k})$ as the prediction for $F(\vz_{k+1})$, suppose that we have access to a general approximation function denoted by
$$
\text{prediction term:} \qquad P(\vz;\mathcal{I}_k),
$$
which is constructed from the available information $\mathcal{I}_k$ up to the $k$-th iteration. %
For instance, %
with a second-order oracle we have $\mathcal{I}_k=\{F(\vz_0),DF(\vz_0),\dots,F(\vz_k),DF(\vz_k)\}$, and a possible candidate for $P$ is the first-order Taylor expansion $T^{(1)}(\vz;\vz_k)= F(\vz_k)+ DF(\vz_k)(\vz-\vz_k)$. 
Similar to the logic in OGDA, the correction term is then given %
by the difference between $F(\vz_{k})$ and our prediction in the previous step, i.e., 
$$
\text{correction term:} \qquad F(\vz_k)-P(\vz_k;\mathcal{I}_{k-1}).
$$
Indeed, if we are ``optimistic'' that the change in the prediction error between consecutive iterations is small, i.e., $ F(\vz_{k+1})-P(\vz_{k+1};\mathcal{I}_k) \approx F(\vz_k)-P(\vz_k;\mathcal{I}_{k-1})$, %
then this implies $F(\vz_{k+1})\approx P(\vz_{k+1};\mathcal{I}_k)+(F(\vz_k)-P(\vz_k;\mathcal{I}_{k-1}))$ and adding the correction term can help reducing the approximation error. 
Considering these generalizations, %
we propose to compute the new iterate by following the update 
$$
\vz_{k+1}= \vz_{k}-\left(\eta_k  P(\vz_{k+1};\mathcal{I}_k) + \hat{\eta}_k(F(\vz_k)-P(\vz_k;\mathcal{I}_{k-1}))\right).
$$
Unlike the OGDA method, this is not an explicit update in general and requires us to solve the equation with respect to $\vz_{k+1}$. Also, we use different step size parameters for the approximation term and correction term to make our algorithm as general as possible. 
In fact, our analysis suggests that the best choice of $\hat{\eta}_k$ is not always equal to $\eta_k$. As we shall see, we will set $\hat{\eta}_k=\eta_{k-1}$ in the convex-concave setting while set $\hat{\eta}_k=\eta_{k-1}/(1+\eta_{k-1}\mu)$ in the strongly-convex-strongly-concave setting. 

The same methodology can be readily generalized for the composite setting with a non-Euclidean norm.
Specifically, in the most general form of our proposed generalized optimistic method for solving Problem \eqref{eq:minimax}, the new point $\vz_{k+1}$ is given by the unique solution of the following inclusion problem
\begin{equation}\label{eq:GOM_inclusion}
    \begin{aligned}
        0\in \eta_k  P(\vz;\mathcal{I}_k) + \hat{\eta}_k(F(\vz_k)-P(\vz_k;\mathcal{I}_{k-1})) 
         +\eta_k H(\vz) + \nabla \Phi(\vz) - \nabla \Phi(\vz_k),
    \end{aligned}
\end{equation}
which can also be equivalently written as  
\begin{equation}\label{eq:GOM_argmin}
        \vz_{k+1}=\argmin_{\vz\in \mathcal{Z}} \{  \langle \eta_k  P(\vz_{k+1};\mathcal{I}_k) + \hat{\eta}_k(F(\vz_k)-P(\vz_k;\mathcal{I}_{k-1})),\vz-\vz_k \rangle
         +\eta_k h(\vz) +D_{\Phi}(\vz,\vz_k) \}.
\end{equation}
Comparing the update rules \eqref{eq:GOM_inclusion} and \eqref{eq:GOM_argmin} with the ones for the Bregman PPM in \eqref{eq:BPP_inclusion} and \eqref{eq:BPP}, we can see that the only modification is replacing the term $\eta_k F(\vz_{k+1})$ with its optimistic approximation $\eta_k P(\vz_{k+1};\mathcal{I}_k) + \hat{\eta}_k(F(\vz_k)-P(\vz_k;\mathcal{I}_{k-1}))$. Moreover, to ensure that $P(\vz_{k+1};\mathcal{I}_k)$ remains a valid approximation of $F(\vz_{k+1})$, 
we impose the following condition
\begin{equation}\label{eq:stepsize_upper_bound}
    \eta_k \|F(\vz_{k+1})-P(\vz_{k+1};\mathcal{I}_k)\|_{*}\leq \frac{\alpha}{2}\|\vz_{k+1}-\vz_k\|,
\end{equation}
where $\alpha\in(0,1]$ is a user-specified constant. The steps of the generalized optimistic method are summarized in Algorithm~\ref{alg:GOMD}.
\begin{algorithm}[!t]\small
    \caption{Generalized optimistic method for monotone inclusion problems}\label{alg:GOMD}
    \begin{algorithmic}[1]
        \STATE \textbf{Input:} initial point $\vz_0\in \mathcal{Z}$, approximation function $P$, strong convexity parameter $\mu\geq 0$, and $0<\alpha\leq 1\!$  
        \STATE \textbf{Initialize:} set $\vz_{-1}\leftarrow \vz_0$ and $P(\vz_0;\mathcal{I}_{-1})\leftarrow F(\vz_0)$
        \FOR{iteration $k=0,\ldots,N-1$}
        \STATE Choose $\eta_k,\hat{\eta}_k>0$ and compute $\vz_{k+1}$ such that
            \begin{gather*}
                0\in \eta_k  P(\vz_{k+1};\mathcal{I}_k) + \hat{\eta}_k(F(\vz_k)-P(\vz_k;\mathcal{I}_{k-1})) 
                +\eta_k H(\vz_{k+1}) + \nabla \Phi(\vz_{k+1}) - \nabla \Phi(\vz_k), \\
            \eta_k \|F(\vz_{k+1})-P(\vz_{k+1};\mathcal{I}_k)\|_{*}\leq \frac{\alpha}{2}\|\vz_{k+1}-\vz_k\|. 
            \end{gather*}
        \ENDFOR
    \end{algorithmic}
\end{algorithm}

We remark that our generalized optimistic method should be regarded as a general framework rather than a directly applicable algorithm. At this point, we do not specify how to choose $\eta_k$ and $\vz_{k+1}$ to satisfy the condition in \eqref{eq:stepsize_upper_bound}, which will depend on the properties of $F$ and the particular choice of $P$. A generic line search scheme will be described in Section~\ref{subsec:line_search}, and we will further devote Sections~\ref{sec:first_order}, \ref{sec:second_order}, and \ref{sec:high_order} to discuss the particular implementations in different settings.  

The following proposition forms the basis for all the convergence results derived later in this paper. Essentially, it shows that Algorithm~\ref{alg:GOMD} enjoys similar convergence guarantees to the Bregman PPM under the specified conditions, even if it only solves an approximated version of the subproblem in~\eqref{eq:BPP_inclusion}.  

\begin{proposition}\label{prop:GOMD}
    Let $\{\vz_k\}_{k\geq 0}$ be the iterates generated by Algorithm~\ref{alg:GOMD}.
     \vspace{-1mm}
    \begin{enumerate}[label=(\alph*)]
        \item \label{item:GOMD_monotone}
        Under Assumption~\ref{assum:monotone}, if we  set $\hat{\eta}_k = \eta_{k-1}$, we have
        \vspace{-2mm}
        \begin{equation}\label{eq:bounded_iterates}
            D_{\Phi}(\vz^*,\vz_k)\leq \frac{2}{2-\alpha}D_{\Phi}(\vz^*,\vz_0), \qquad \forall k \geq 0. 
        \end{equation}
        Moreover, 
        for any $\vz = (\vx,\vy)\in \mathcal{X}\times \mathcal{Y}$, we have  
        \vspace{-4mm}
        \begin{equation}\label{eq:duality_gap}
          \ell(\bar{\vx}_N,\vy)-\ell(\vx,\bar{\vy}_N) \leq D_{\Phi}(\vz,\vz_0)\left(\sum_{k=0}^{N-1} \eta_k\right)^{-1}, 
          \vspace{-2mm}
        \end{equation}
        where $\bar{\vz}_N = (\bar{\vx}_N,\bar{\vy}_N)$ is given by $\bar{\vz}_N=\frac{1}{\sum_{k=0}^{N-1} \eta_k}\sum_{k=0}^{N-1} \eta_k\vz_{k+1}$.
        \item \label{item:GOMD_strongly_monotone}
        \vspace{2mm}
        Under Assumption~\ref{assum:strongly_monotone}, if we set $\hat{\eta}_k = {\eta_{k-1}}/{(1+\mu\eta_{k-1})}$,  we have 
           \vspace{-2mm}
        \begin{equation*}
         D_{\Phi}(\vz^*,\vz_N)\leq \frac{2}{2-\alpha}D_{\Phi}(\vz^*,\vz_0) \prod_{k=0}^{N-1} (1+\eta_k\mu)^{-1}. 
        \end{equation*}
    \end{enumerate}
\end{proposition}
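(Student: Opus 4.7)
The plan is to treat both parts uniformly from a single per-iteration inequality and then tailor the telescoping to the choice of $\hat{\eta}_k$. The inclusion in Algorithm~\ref{alg:GOMD} provides some $\vu_{k+1}\in H(\vz_{k+1})$ with
\[
\nabla\Phi(\vz_k)-\nabla\Phi(\vz_{k+1}) = \eta_k P(\vz_{k+1};\mathcal{I}_k) + \hat{\eta}_k\bigl(F(\vz_k)-P(\vz_k;\mathcal{I}_{k-1})\bigr) + \eta_k \vu_{k+1}.
\]
Taking the inner product with $\vz_{k+1}-\vz$ for arbitrary $\vz\in\mathcal{Z}$, applying the three-point identity~\eqref{eq:three_point_identity} on the left, using the subgradient inequality $\langle \vu_{k+1},\vz_{k+1}-\vz\rangle\geq h(\vz_{k+1})-h(\vz)$, and writing the prediction errors as $\vepsilon_k := F(\vz_{k+1})-P(\vz_{k+1};\mathcal{I}_k)$ (so $\vepsilon_{-1}=0$ by initialization), I obtain
\[
\eta_k \langle F(\vz_{k+1}), \vz_{k+1}-\vz \rangle + \eta_k\bigl(h(\vz_{k+1})-h(\vz)\bigr) \leq D_\Phi(\vz,\vz_k) - D_\Phi(\vz,\vz_{k+1}) - D_\Phi(\vz_{k+1},\vz_k) + \eta_k \langle \vepsilon_k, \vz_{k+1}-\vz \rangle - \hat{\eta}_k \langle \vepsilon_{k-1}, \vz_{k+1}-\vz \rangle.
\]

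For part~(a), I split $\vz_{k+1}-\vz = (\vz_{k+1}-\vz_k) + (\vz_k-\vz)$ in the final term. With $\hat{\eta}_k=\eta_{k-1}$, the pieces $\eta_k\langle\vepsilon_k,\vz_{k+1}-\vz\rangle$ and $\eta_{k-1}\langle\vepsilon_{k-1},\vz_k-\vz\rangle$ telescope over $k=0,\ldots,N-1$, leaving only the boundary $\eta_{N-1}\langle\vepsilon_{N-1},\vz_N-\vz\rangle$ and a residual $-\sum_k \eta_{k-1}\langle\vepsilon_{k-1},\vz_{k+1}-\vz_k\rangle$. Both are controlled by combining~\eqref{eq:stepsize_upper_bound}, Cauchy--Schwarz, Young's inequality, and the $1$-strong convexity of $\Phi$ (which gives $\tfrac12\|\cdot\|^2\leq D_\Phi$). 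With careful bookkeeping the $D_\Phi(\vz_N,\vz_{N-1})$ contributions from the two estimates cancel, producing $\sum_k \eta_k(\langle F(\vz_{k+1}), \vz_{k+1}-\vz \rangle + h(\vz_{k+1})-h(\vz)) \leq D_\Phi(\vz,\vz_0) - (1-\tfrac{\alpha}{2})D_\Phi(\vz,\vz_N)$. Setting $\vz=\vz^*$ and invoking Lemma~\ref{lem:saddle_point} with $\mu=0$ yields~\eqref{eq:bounded_iterates}; dividing by $\sum_k\eta_k$, applying Lemma~\ref{lem:duality_gap} with weights $\theta_{k+1}=\eta_k/\sum_j\eta_j$, and discarding the nonpositive $D_\Phi(\vz,\vz_N)$ term gives~\eqref{eq:duality_gap}.

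For part~(b), Lemma~\ref{lem:saddle_point} upgrades the left-hand side of the per-iteration inequality (with $\vz=\vz^*$) to $(1+\mu\eta_k)D_\Phi(\vz^*,\vz_{k+1})$. I then scale by $\Lambda_k := \prod_{i=0}^{k-1}(1+\mu\eta_i)$ before summing, so that the left-hand side telescopes into $\Lambda_N D_\Phi(\vz^*,\vz_N) - D_\Phi(\vz^*,\vz_0)$. The choice $\hat{\eta}_k = \eta_{k-1}/(1+\mu\eta_{k-1})$ is engineered precisely so that $\Lambda_k\hat{\eta}_k = \Lambda_{k-1}\eta_{k-1}$, which is exactly what preserves the telescoping structure of the error terms under the $\Lambda_k$-weighting. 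Repeating the Cauchy--Schwarz/Young argument with $\Lambda_k$-weights and again arranging the boundary cancellation produces $(\Lambda_N - \tfrac{\alpha}{2}\Lambda_{N-1})D_\Phi(\vz^*,\vz_N) \leq D_\Phi(\vz^*,\vz_0)$. Since $\Lambda_{N-1}\leq\Lambda_N$, the coefficient on the left is at least $\tfrac{2-\alpha}{2}\Lambda_N$, which yields the advertised bound.

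The main obstacle is the delicate accounting of the $D_\Phi(\vz_N,\vz_{N-1})$ contributions arising from the boundary term $\Lambda_{N-1}\eta_{N-1}\langle\vepsilon_{N-1},\vz_N-\vz^*\rangle$. A careless estimate of the form $\sum_{k=1}^{N-1}\Lambda_{k-1}\bigl(D_\Phi(\vz_k,\vz_{k-1})+D_\Phi(\vz_{k+1},\vz_k)\bigr) \leq 2\sum_k\Lambda_k D_\Phi(\vz_{k+1},\vz_k)$ leaves an uncancelled positive $\Lambda_{N-1}D_\Phi(\vz_N,\vz_{N-1})$ on the right that would force $\alpha\leq 2/3$. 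The correct refinement expands the left-hand side as $\Lambda_0 D_\Phi(\vz_1,\vz_0) + 2\sum_{j=1}^{N-2}\Lambda_j D_\Phi(\vz_{j+1},\vz_j) + \Lambda_{N-1}D_\Phi(\vz_N,\vz_{N-1})$ and keeps the endpoint terms explicit, so that the $\Lambda_{N-1}D_\Phi(\vz_N,\vz_{N-1})$ exactly cancels with the Young bound on the boundary, thereby restoring the full range $\alpha\in(0,1]$ stated in the proposition.
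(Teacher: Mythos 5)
Your proof is correct and uses the same building blocks as the paper's argument: the inclusion plus three-point identity to get a per-iteration inequality, the stepsize condition~\eqref{eq:stepsize_upper_bound} with Cauchy--Schwarz and Young's inequality to control the prediction-error terms, the strong convexity of $\Phi$ to pass between norms and Bregman distances, and the $\Lambda_k$-weighting that makes the cross-iteration error terms telescope in part (b) (your identity $\Lambda_k\hat\eta_k=\Lambda_{k-1}\eta_{k-1}$ is exactly why $\hat\eta_k=\eta_{k-1}/(1+\mu\eta_{k-1})$ is the right choice). Where you differ is the packaging: the paper folds the carried-over inner product $-\hat\eta_k\langle F(\vz_k)-P(\vz_k;\mathcal{I}_{k-1}),\vz_k-\vz\rangle$ and a fraction $\frac{\alpha}{4(1+\eta_{k-1}\mu)^2}\|\vz_k-\vz_{k-1}\|^2$ of the displacement into an explicit Lyapunov function $V(\vz_k,\vz_{k-1};\vz)$ (Lemma~\ref{lem:Lyapunov_function}), establishes the one-step contraction $\eta_k(\cdots)\le V_k-(1+\eta_k\mu)V_{k+1}-\frac{1-\alpha}{2}\|\vz_{k+1}-\vz_k\|^2$, and then reads off both parts of the proposition from $V_0=D_\Phi(\vz,\vz_0)$ and $V_k\ge\frac{2-\alpha}{2}D_\Phi(\vz,\vz_k)$. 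You instead expand the Lyapunov function out and run the telescoping globally. The two are equivalent, and you are right that the global sum requires careful endpoint accounting for the full range $\alpha\in(0,1]$: lumping the shifted sums as $\sum_{k=1}^{N-1}\Lambda_{k-1}(\|\vz_k-\vz_{k-1}\|^2+\|\vz_{k+1}-\vz_k\|^2)\le 2\sum_k\Lambda_k\|\vz_{k+1}-\vz_k\|^2$ overcounts $\|\vz_N-\vz_{N-1}\|^2$ by an extra $\Lambda_{N-1}$ and would indeed force $\alpha\le 2/3$; the precise expansion gives coefficient $\frac{\alpha}{4}(\Lambda_{N-2}+\Lambda_{N-1})\le\frac{\alpha}{2}\Lambda_{N-1}$, which is absorbed by the $\frac12\Lambda_{N-1}\|\vz_N-\vz_{N-1}\|^2$ from $-\Lambda_{N-1}D_\Phi(\vz_N,\vz_{N-1})$ whenever $\alpha\le 1$. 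Note it absorbs rather than ``exactly cancels'' as you wrote, but the constant still works out. The one thing the paper's Lyapunov formulation buys that your global telescoping throws away is the per-iteration surplus $\frac{1-\alpha}{2}\|\vz_{k+1}-\vz_k\|^2$, which is exactly what Lemma~\ref{lem:sum_of_square} keeps and weights by $\Lambda_k$; if you later need that lemma you would have to redo the accounting rather than reuse a single recursion.
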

    \vspace{-5mm}
\begin{proof}
    See Appendix~\ref{appen:proof_of_GOMD}. 
\end{proof}

As in Proposition~\ref{prop:BPP} for the Bregman PPM, the convergence rate in Proposition~\ref{prop:GOMD} for the generalized optimistic method depends on the choice of $\{\eta_k\}_{k\geq 0}$. On the other hand, the step sizes here are not arbitrary but constrained by \eqref{eq:stepsize_upper_bound}. Depending on our choice of the approximation function~$P$, the condition on $\eta_k$ varies, and as a result, we obtain different rates. 
In fact, the step size $\eta_k$  is closely related to the displacement $\|\vz_{k+1}-\vz_k\|$, and more specifically it is approximately on the order of $\frac{1}{\|\vz_{k+1}-\vz_k\|^{p-1}}$ in our $p$-th-order method ($p\geq 2$).
Thus, next we provide an upper bound on a (weighted) sum of $\|\vz_{k+1}-\vz_k\|^2$, which will later translate into bounds on $\sum_{k=0}^{N-1}\eta_k$ and $\prod_{k=0}^{N-1}(1+\eta_k \mu)$. 
\begin{lemma}\label{lem:sum_of_square}
    Let $\{\vz_k\}_{k\geq 0}$ be the iterates generated by Algorithm~\ref{alg:GOMD} with $\hat{\eta}_k$ chosen as in Proposition~\ref{prop:GOMD}. Then under Assumption~\ref{assum:monotone}, we have $\sum_{k=0}^{N-1} \|\vz_{k+1}-\vz_{k}\|^2 \leq \frac{2}{1-\alpha}D_{\Phi}(\vz^*,\vz_0)$. Moreover,
    under Assumption~\ref{assum:strongly_monotone}, we have $\sum_{k=0}^{N-1} \left(\|\vz_{k+1}-\vz_{k}\|^2\prod_{l=0}^{k-1} \left(1+{\eta_l}\mu\right)\right) \leq \frac{2}{1-\alpha}D_{\Phi}(\vz^*,\vz_0)$.
\end{lemma}
\begin{proof}
    See Appendix~\ref{appen:proof_of_SOS}. 
\end{proof}

\vspace{-5mm}
\subsection{Backtracking line search}\label{subsec:line_search}
    \vspace{-2mm}

As mentioned earlier, our general scheme in Algorithm~\ref{alg:GOMD} does not directly lead to a practical algorithm, as it requires a proper policy to ensure that the condition in \eqref{eq:stepsize_upper_bound} holds. 
As the first step towards practical implementations, in this section, we propose a backtracking line search scheme to select the step size $\eta_k$ adaptively in Algorithm~\ref{alg:GOMD}. To simplify the notation, let $\vz^{-}=\vz_k$, $\eta=\eta_k$, $P(\vz)=P(\vz;\mathcal{I}_k)$, and $\vv^{-} = \vv_k := \hat{\eta}_k(F(\vz_k)-P(\vz_k;\mathcal{I}_{k-1}))$. In light of \eqref{eq:GOM_inclusion} and \eqref{eq:stepsize_upper_bound}, at the $k$-th iteration our goal is to find a step size $\eta>0$ and a point $\vz\in \mathcal{Z}$ such that 
\begin{gather}
    0 \in  \eta P(\vz)+\vv^{-}+\eta H(\vz)+\nabla \Phi(\vz)- \nabla \Phi(\vz^{-}), \label{eq:simplified_subproblem}\\
    \eta \|F(\vz) - P(\vz)\|_* \leq \frac{\alpha}{2}\|\vz-\vz^{-}\|.\label{eq:simplified_step_bound}
\end{gather}
Note that the conditions in \eqref{eq:simplified_subproblem} and \eqref{eq:simplified_step_bound} depend on both $\eta$ and $\vz$. Hence, in general we need to choose $\eta$ and $\vz$ simultaneously, except in the first-order setting (see Section~\ref{sec:first_order}).

We assume that the approximation function $P$ is maximal monotone and continuous on $\mathcal{Z}$.
As a corollary, for any fixed $\eta>0$, the monotone inclusion subproblem with respect to $\vz$ in \eqref{eq:simplified_subproblem} has a unique solution (see, e.g., \cite{Eckstein1993}), which we denote by $\vz(\eta;\vz^{-})$. Also, we assume access to a black-box solver of the subproblem in \eqref{eq:simplified_subproblem}, which we formally define as follows for later reference. 
\begin{definition}[Optimistic Subsolver]\label{def:subsolver}
    For any given $\eta$, $\vv^{-}$ and $\vz^{-}$, the \emph{optimistic subsolver} returns the solution of \eqref{eq:simplified_subproblem}. 
\end{definition}

Now we are ready to describe our line search scheme, which takes an initial trial step size $\sigma$ and a parameter $\beta \in (0,1)$ as inputs. 
At a high level, it works by repeatedly solving \eqref{eq:simplified_subproblem} for different step sizes in $\{\sigma \beta^i: i \geq 0\}$ and then checking the condition in \eqref{eq:simplified_step_bound}, each of which involves one call to the optimistic subsolver. 
Specifically, we first assign the value of $\sigma$ to the step size $\eta$ and compute $\vz$ as the solution of \eqref{eq:simplified_subproblem}.  
If the condition in \eqref{eq:simplified_step_bound} is satisfied with respect to $\eta$ and $\vz$, we 
accept $\eta$ as the final step size and choose $\vz$ as the next iterate. Otherwise, we decrease the step size by $\eta \leftarrow \beta \eta$ and recompute $\vz$ from \eqref{eq:simplified_subproblem}, 
and this backtracking process is repeated until $(\eta,\vz)$ satisfies the condition in 
\eqref{eq:simplified_step_bound}. The whole procedure is formally described in Algorithm~\ref{alg:ls}.
\begin{remark}
    The major computational cost of our line search scheme comes from solving Subproblem~\eqref{eq:simplified_subproblem} with different step sizes, namely, multiple calls to the optimistic subsolver. We note that the computational cost of the subsolver depends on the specific saddle point problem and our choice of the approximation function $P$. Hence, we use one call to the optimistic subsolver as the unit and measure the complexity of our line search scheme in terms of the number of calls to the solver.   
\end{remark}

\begin{algorithm}[!t]\small
    \caption{{Backtracking line search scheme}}\label{alg:ls}
    \begin{algorithmic}[1]
        \STATE \textbf{Input:} current iterate $\vz^{-}\in \mathcal{Z}$, $\vv^{-}\in \mathbb{R}^d$, initial trial step size $\sigma>0$, approximation function $P$, $\alpha\in (0,1]$, $\beta\in (0,1)$
        \STATE \textbf{Output:} step size $\eta$ and the next iterate $\vz\in \mathcal{Z}$
        \STATE Set $\eta \leftarrow \sigma$ and let $\vz$ be the solution of $0 \in  \eta P(\vz)+\vv^{-}+\eta H(\vz)+\nabla \Phi(\vz)- \nabla \Phi(\vz^{-})$ 
        \WHILE{$\eta \|F(\vz) - P(\vz)\|_* > \frac{\alpha}{2}\|\vz-\vz^{-}\|$}
        \STATE  Set $\eta \leftarrow \eta \beta$ and let $\vz$ be the solution of $0 \in  \eta P(\vz)+\vv^{-}+\eta H(\vz)+\nabla \Phi(\vz)- \nabla \Phi(\vz^{-})$ 
        \ENDWHILE
        \STATE \textbf{Return} $\eta$ and $\vz$

    \end{algorithmic}
\end{algorithm}

Under suitable conditions, the following lemma ensures that the line search scheme will terminate properly and find an admissible step size.

\begin{lemma}\label{lem:backtrack_terminate}
    Assume that there exist $\delta>0$ and $L>0$ such that $\|F(\vz)-P(\vz)\|_* \leq L \|\vz-\vz^{-}\|$ for any $\vz$ satisfying $\|\vz-\vz^{-}\|\leq \delta$. Then Algorithm~\ref{alg:ls} will always terminate in finite steps. 
\end{lemma}
\begin{proof}
    Our proof is adapted from \cite[Lemma 3.2]{Malitsky2020}. First, we show that $\vz(\eta;\vz^{-})$ lies in a compact set $\mathcal{C}_{\sigma}\subset \mathcal{Z}$ for any $\eta\in [0,\sigma]$. For simplicity, in the following we denote $\vz(\eta;\vz^{-})$ by $\vz$. By \eqref{eq:simplified_subproblem}, there exists $\vw\in H(\vz)$ such that $        \eta P(\vz) + \vv^{-}+\eta \vw+ \nabla\Phi(\vz)-\nabla\Phi(\vz^{-})=0$, 
    which implies that 
    \begin{equation}\label{eq:zero_inner_product}
        -\langle \vv^{-},\vz-\vz^{-} \rangle = \langle\nabla\Phi(\vz)-\nabla\Phi(\vz^{-}),\vz-\vz^{-} \rangle+\eta \langle P(\vz)+\vw,\vz-\vz^{-}\rangle. 
    \end{equation} 
    Since $\vz^{-}$ is feasible, we can find $\vw^{-}\in \mathbb{R}^d$ such that $\vw^{-}\in H(\vz^{-})$.
    By the strong convexity of $\Phi$ and the monotonicity of $P$ and $H$, it holds that $\langle\nabla\Phi(\vz)-\nabla\Phi(\vz^{-}), \vz-\vz^{-} \rangle \geq \|\vz-\vz^{-}\|^2$ and $\langle P(\vz)+\vw - (P(\vz^-)+\vw^-), \vz-\vz^- \rangle \geq 0$. Hence, from \eqref{eq:zero_inner_product} we have 
    $-\langle \vv^{-},\vz-\vz^{-} \rangle 
        \geq \|\vz-\vz^{-}\|^2+\eta \langle P(\vz^{-})+\vw^{-}, \vz-\vz^{-} \rangle$. 
    Together with Cauchy-Schwarz inequality and the fact that $\eta\leq \sigma$, this leads to
    \begin{equation}\label{eq:uniform_bounded}
        \|\vz-\vz^{-}\| \leq \|\vv^{-}\|_{*}+\eta\|P(\vz^{-})+\vw^{-}\|_{*}\leq \|\vv^{-}\|_{*}+\sigma\|P(\vz^{-})+\vw^{-}\|_{*}.
    \end{equation}
    Since the upper bound in \eqref{eq:uniform_bounded} is independent of $\eta$, this proves that $\vz(\eta;\vz^{-})$ belongs to the compact set $\mathcal{C}_{\sigma}:=\{\vz\in \mathbb{R}^d: \|\vz-\vz^{-}\|\leq \|\vv^{-}\|_{*}+\sigma\|P(\vz^{-})+\vw^{-}\|_{*}\}\cap \mathcal{Z}$ for all $0\leq \eta\leq \sigma$. 

    Now we prove the lemma by contradiction. Denote the trial step size after $i$ steps in Algorithm~\ref{alg:ls} by $\eta^{(i)}:=\sigma\beta^{i}$. If Algorithm~\ref{alg:ls} does not terminate in finite steps, then for any integer $i\geq 0$, the trial step size $\eta^{(i)}$ does not satisfy condition \eqref{eq:simplified_step_bound}, i.e., 
    \begin{equation}\label{eq:condition_for_all_i}
        \eta^{(i)} \|F(\vz(\eta^{(i)};\vz^{-}))-P(\vz(\eta^{(i)};\vz^{-}))\|_{*} > \frac{1}{2}\alpha\|\vz(\eta^{(i)};\vz^{-})-\vz^{-}\|, \quad \forall i \geq 0. 
    \end{equation}
    Since both $F$ and $P$ are continuous on $\mathcal{Z}$ and $\vz(\eta^{(i)};\vz^{-})$ remains in a compact set $\mathcal{C}_{\sigma}$ for all $i\geq 0$, we can upper bound $\|F(\vz(\eta^{(i)};\vz^{-}))-P(\vz(\eta^{(i)};\vz^{-}))\|_{*}$ by a constant independent of $i$. Thus, we obtain $\vz(\eta^{(i)};\vz^{-}) \rightarrow \vz^{-}$ by taking $i$ to infinity in \eqref{eq:condition_for_all_i}. Hence, for $i$ large enough, we have $\|\vz(\eta^{(i)};\vz^{-})-\vz^{-}\|\leq \delta$, which further implies that {$\|F(\vz(\eta^{(i)};\vz^{-}))-P(\vz(\eta^{(i)};\vz^{-}))\|_{*} \leq L\|\vz(\eta^{(i)};\vz^{-})-\vz^{-}\|$} by our assumption.
    Combining this with \eqref{eq:condition_for_all_i}, we conclude that $\eta^{(i)}L>\frac{1}{2}\alpha$ for any large enough $i$. Since $\eta^{(i)}\rightarrow 0$ as $i\rightarrow \infty$, this leads to a contradiction.    
\end{proof}
Finally, while Lemma~\ref{lem:backtrack_terminate} %
ensures that the number of steps taken in Algorithm~\ref{alg:ls} must be finite,  we still need an explicit upper bound on the total number of subsolver calls to characterize the complexity of our line search scheme. %
The following proposition partially addresses this issue by giving such an upper bound in terms of the returned step size $\eta$ and the initial trial step size $\sigma$.

\begin{proposition}\label{prop:ls_total}
    Let $\eta$ be the step size returned by Algorithm~\ref{alg:ls}. Then the procedure takes $\log_{\frac{1}{\beta}}\frac{\sigma}{\eta}+1$ calls to the optimistic subsolver in total. 
\end{proposition}
\begin{proof}
    The line search scheme iteratively tests the step size from the set $\{\sigma \beta^i: i\geq 0\}$ in a decreasing order, each of which corresponds to one call to the optimistic subsolver. Hence, if $N_{\mathrm{ls}}$ denotes the total number of subsolver calls in Algorithm~\ref{alg:ls}, the returned step size satisfies $\eta = \sigma \beta^{N_{\mathrm{ls}}-1}$, which implies $N_{\mathrm{ls}} = \log_{\frac{1}{\beta}}\frac{\sigma}{\eta}+1$.  
\end{proof}

 \begin{remark}
Note that the above result does not directly provide a complexity bound for our line search scheme, as it depends on the value of the returned step size $\eta$ and our choice of the initial trial step size $\sigma$. In the following sections, for each specific algorithm that we develop, we will establish lower bounds on $\eta$ and use them to derive an explicit upper bound on the number of calls to the optimistic subsolver during the whole process. 
\end{remark}

\section{First-order generalized optimistic method}\label{sec:first_order}

In this section, we focus on the case where only first-order information of the smooth component $f$ of the objective function in \eqref{eq:minimax} is available. We also make the following assumption on $f$. 
\begin{assumption}\label{assum:first_order}
    The operator $F$ defined in \eqref{eq:def_of_F_H} is $L_1$-Lipschitz on $\mathcal{Z}$. Also, we have access to an oracle that returns $F(\vz)$ for any given $\vz$.
\end{assumption}
Under the above assumption, we have from Definition~\ref{def:smooth} that 
\begin{equation}\label{eq:Lipschitz}
    \|F(\vz)-F(\vz_{k})\|_{*}\leq L_1\|\vz-\vz_k\|, \quad \forall \vz \in \mathcal{Z}.
\end{equation}
In this case, a natural choice for the approximation function is $P(\vz;\mathcal{I}_k):=F(\vz_k)$, and accordingly at iteration $k$, our proposed optimistic method in Algorithm~\ref{alg:GOMD} aims to find $\vz\in \mathcal{Z}$ and $\eta>0$ such that 
\begin{gather}
    \vz = \argmin_{\vw\in \mathcal{Z}} \left\{\left\langle \eta F(\vz_k)+\hat{\eta}_k(F(\vz_k)-F(\vz_{k-1})),\vw-\vz_k\right\rangle+\eta h(\vw)+D_{\Phi}(\vw,\vz_k)\right\}, \label{eq:OMD_first}\\
    \eta \|F(\vz)-F(\vz_{k})\|_{*}\leq \frac{\alpha}{2}\|\vz-\vz_k\|. \label{first_order_step_size_condition}
\end{gather}
 In particular, the optimistic subsolver defined in Definition~\ref{def:subsolver} solves the subproblem in \eqref{eq:OMD_first}, which is a standard update in first-order methods.

As described in Algorithm~\ref{alg:first_optimistic}, we will study two different approaches for selecting the step size $\eta_k$ in the first-order optimistic method: %
(i) fixed step size scheme; (ii) backtracking line search scheme. As the name suggests, in the first approach a fixed step size $\eta_k \equiv \eta$ is properly selected to ensure that the condition in \eqref{first_order_step_size_condition} is always satisfied. In the second approach, we instead select the step size according to the line search scheme described in Algorithm~\ref{alg:ls}. One major advantage of the second approach is that it does not require any prior knowledge of the Lipschitz constant of $F$, while in the former we at least need to know an upper bound on the Lipschitz constant. 

\begin{algorithm}[!t]\small
    \caption{First-order optimistic method}\label{alg:first_optimistic}
    \begin{algorithmic}[1]
        \STATE \textbf{Input:} initial point $\vz_0\in \mathcal{Z}$, strong convexity parameter $\mu\geq 0$%
        \STATE \textbf{Initialize:} set $\vz_{-1}\leftarrow \vz_0$ and $P(\vz_0;\mathcal{I}_{-1})\leftarrow F(\vz_0)$
        \STATE \textbf{Option I (fixed step size scheme):} \tikzmark{top}
        \begin{ALC@g}
            \STATE Choose $M>0$
            \FOR{iteration $k=0,\ldots,N-1$}
            \STATE Compute $
                \vz_{k+1} = \argmin_{\vz\in \mathcal{Z}} \left\{\left\langle \frac{1}{M}F(\vz_k)+\frac{1}{M+\mu}(F(\vz_k)-F(\vz_{k-1})),\vz\right\rangle+\frac{1}{M}h(\vz)+D_{\Phi}(\vz,\vz_k)\right\} \tikzmark{right}
            $
            \ENDFOR
            \tikzmark{bottom} 
        \end{ALC@g}
        \STATE \textbf{Option II (line search scheme):}\tikzmark{top2}
        \begin{ALC@g}
            \STATE Choose initial trial step size $\sigma_0>0$, line search parameters $\alpha\in(0,1]$ and $\beta\in (0,1)$
            \FOR{iteration $k=0,\ldots,N-1$}
            \STATE Set $\hat{\eta}_k=\eta_{k-1}/(1+\eta_{k-1}\mu)$
            \STATE Select $\eta_{k}$ by Algorithm~\ref{alg:ls} with $\sigma=\sigma_k$, $\vz^-=\vz_k$, $P(\vz)=F(\vz_k)$, and $\vv^{-}=\hat{\eta}_k(F(\vz_k)-F(\vz_{k-1}))$ \tikzmark{right2}
            \STATE Compute $
                \vz_{k+1} = \argmin_{\vz\in \mathcal{Z}} \left\{\left\langle \eta_k F(\vz_k)+\hat{\eta}_k(F(\vz_k)-F(\vz_{k-1})),\vz\right\rangle+\eta_k h(\vz)+D_{\Phi}(\vz,\vz_k)\right\} 
            $
            \STATE Set $\sigma_{k+1} \leftarrow \eta_k/\beta$
            \ENDFOR
            \vspace{-3mm}
            \tikzmark{bottom2}
        \end{ALC@g}
    \end{algorithmic}
    \AddNote{top}{bottom}{right2}{}
    \AddNote{top2}{bottom2}{right2}{}
\end{algorithm}
\subsection{Fixed step size scheme}

In light of the inequality in \eqref{eq:Lipschitz}, if we select the step size such that $\eta_k\leq \frac{1}{2L_1}$, then the condition in \eqref{first_order_step_size_condition} required for the convergence of the optimistic method is always satisfied with $\alpha=1$. Hence, we can simply select a fixed step size of  $\eta_k \equiv 1/M$ with $M\geq 2L_1$ for all $k\geq 0$. In this case, the update of the first-order optimistic method becomes 
\begin{equation}\label{eq:OMD_first_fixed}
    \vz_{k+1} = \argmin_{\vz\in \mathcal{Z}} \left(\left\langle \frac{1}{M}F(\vz_k)+\hat{\eta}_k(F(\vz_k)-F(\vz_{k-1})),\vz-\vz_k\right\rangle+\frac{1}{M}h(\vz)+D_{\Phi}(\vz,\vz_k)\right),
\end{equation}
where the correction coefficient $\hat{\eta}_k$ will be chosen as in Proposition~\ref{prop:GOMD} based on the strong convexity parameter $\mu$. %
By viewing \eqref{eq:OMD_first_fixed} as an instance of our generalized optimistic method, the convergence guarantees immediately follow from Proposition~\ref{prop:GOMD}, which are summarized in the theorem below.

\begin{theorem}\label{thm:OMD_first}
    Assume that the operator $F$ is $L_1$-Lipschitz on $\mathcal{Z}$. Let $\{\vz_k\}_{k\geq 0}$ be the iterates generated by \eqref{eq:OMD_first_fixed}. Given $M\geq 2L_1$, the following statements hold:   
    \begin{enumerate}[label=(\alph*)]
    \vspace{-2mm}
        \item Under Assumption~\ref{assum:monotone}, set $\hat{\eta}_k = 1/M$. Then we have $D_{\Phi}(\vz^*,\vz_k)\leq 2D_{\Phi}(\vz^*,\vz_0)$ for any $k\geq 0$. Moreover, if we define $\bar{\vz}_N = (\bar{\vx}_N,\bar{\vy}_N)$ where $\bar{\vz}_N=\frac{1}{N}\sum_{k=0}^{N-1} \vz_{k+1}$, then for any $\vz = (\vx,\vy)\in \mathcal{X}\times \mathcal{Y}$ 
        \begin{equation}\label{eq:gap_bound_1st}
            \ell(\bar{\vx}_N,\vy)-\ell(\vx,\bar{\vy}_N) \leq \frac{MD_{\Phi}(\vz,\vz_0)}{N}, 
        \end{equation}
        \item Under Assumption~\ref{assum:strongly_monotone}, set $\hat{\eta}_k = 1/(M+\mu)$. Then we have 
        \begin{equation*}%
            D_{\Phi}(\vz^*,\vz_N)\leq 2D_{\Phi}(\vz^*,\vz_0) \left(\frac{M}{\mu+M}\right)^{N}.
        \end{equation*}
    \end{enumerate}
\end{theorem}
    
The result in Theorem~\ref{thm:OMD_first} shows that the proposed first-order generalized optimistic method converges at a sublinear rate of $\mathcal{O}(1/N)$ for convex-concave saddle point problems, and converges at a linear rate of $\mathcal{O}((M/(\mu +M))^N)$ for strongly-convex-strongly-concave saddle point problems. 
Asymptotically, these bounds are the same as those derived in \cite{Mokhtari2020a,Mokhtari2020,gidel2018a} for the OGDA method. However, these prior works are restricted to the unconstrained saddle point problems in the Euclidean setup.
In this regard, our results can be viewed as a natural extension of the OGDA method and its analysis to the more general composite saddle point problems equipped with an arbitrary Bregman distance.

The forward-reflected-backward splitting method in \cite{Malitsky2020} for monotone inclusion problems and the operator extrapolation method in \cite{kotsalis2022simple} for constrained variational inequalities share similar update rule and convergence analysis as the above first-order optimistic method. 
Moreover, all differ from the classical Popov's method in \eqref{eq:OMD_original} except in the unconstrained setting. %
{In particular,  
a {notable difference is that}  our method and those in \cite{Malitsky2020,kotsalis2022simple} require one projection onto the feasible set per iteration, while  \eqref{eq:OMD_original} requires two projections per iteration.}
On the other hand, our setting is more general than the mentioned works in the sense that 
{the authors in \cite{Malitsky2020} only considered the Euclidean norm (i.e., $\Phi=\frac{1}{2}\|\cdot\|_2^2$), while the authors in \cite{kotsalis2022simple} only considered smooth objectives (i.e., $h_1=h_2\equiv 0$).}

\subsection{Backtracking line search scheme}
While simple in concept, the fixed step size scheme requires
knowledge of the Lipschitz constant, which might be impractical. Also, choosing the same step size for all iterations could fail to exploit the local geometry and be overly conservative when it is possible to take larger steps. Next, we overcome these issues by choosing the step sizes adaptively with the line search scheme proposed in Algorithm~\ref{alg:ls}. 
Our proposed method is shown as Option II in Algorithm~\ref{alg:first_optimistic}. 
In addition, at the $k$-th iteration ($k\geq 1$), our line search procedure starts from $\sigma_{k}=\eta_{k-1}/\beta$, where $\eta_{k-1}$ is the step size chosen at the previous iteration. Similar initialization strategy is used in \cite{Malitsky2017,Malitsky2020}. 
Finally, note that Assumption~\ref{assum:first_order} ensures that the condition in Lemma~\ref{lem:backtrack_terminate} is satisfied, and hence the line search scheme is guaranteed to terminate in finite steps and hence the step sizes $\{\eta_k\}_{k\geq 0}$ are well defined. 

To obtain convergence rates and bound the number of subsolver calls required for the line search scheme, we need to bound the step size $\eta_k$ away from zero.
{To this end, define $ \mathcal{B} =\{k: \eta_k < \sigma_k\}$, i.e., the set of iteration indices where the line search scheme backtracks the step size.}
The following lemma serves this purpose and gives a lower bound on $\eta_k$ when $k \in \mathcal{B}$.
\begin{lemma}\label{lem:beta_optimal_point}
    Suppose that $F$ is $L_1$-Lipschitz on $\mathcal{Z}$. 
    If $k \in \mathcal{B}$, then $\eta_k>{\alpha\beta}/(2L_1)$. 
\end{lemma}

 \vspace{-2mm}
\begin{proof}
    To simplify the notation, we drop the subscript $k$ and denote $\vz_{k}$ by $\vz^{-}$. Since $k \in \mathcal{B}$, by Algorithm~\ref{alg:ls}, the condition in~\eqref{first_order_step_size_condition} is violated for $\eta' = \eta/\beta$ and $\vz'=\vz(\eta';\vz^-)$ (since the line search scheme would choose $\eta'$ instead of $\eta$ otherwise). 
    Hence,
    \begin{equation*}
        \frac{\alpha}{2}\|\vz'-\vz^{-}\| < \eta'\|F(\vz')-F(\vz^{-})\|_{*}
                                                                \leq \eta'L_1\|\vz'-\vz^{-}\|,
    \end{equation*}
    where we used the fact that $F$ is $L_1$-Lipschitz in the last inequality. This implies that $\eta'>\alpha/(2L_1)$, and we immediately obtain that  $\eta \geq \beta\eta'>{\alpha\beta}/(2L_1)$.  
\end{proof}

    \vspace{-1mm}
Combining all pieces above, we arrive at the following theorem by specializing  
Propositions~\ref{prop:GOMD} and~\ref{prop:ls_total}.

    \vspace{-1mm}

\begin{theorem}\label{thm:OMD_first_ls}
    Assume that the operator $F$ is $L_1$-Lipschitz on $\mathcal{Z}$. Let $\{\vz_k\}_{k\geq 0}$ be the iterates generated by \eqref{eq:OMD_first}, where the step sizes are determined by Algorithm~\ref{alg:ls} with parameters $\alpha\in (0,1]$, $\beta\in (0,1)$, and an initial trial step size $\sigma_0$. %
    Then the following statements hold:   
    \begin{enumerate}[label=(\alph*)]
    \vspace{-2mm}
        \item\label{item:first_ls_cc} Under Assumption~\ref{assum:monotone}, set $\hat{\eta}_k = \eta_{k-1}$. 
        Then we have $D_{\Phi}(\vz^*,\vz_k)\leq \frac{2}{2-\alpha} D_{\Phi}(\vz^*,\vz_0)$ for any $k\geq 0$. Moreover, for any $\vz = (\vx,\vy)\in \mathcal{X}\times \mathcal{Y}$,
        \begin{equation*}
            \ell(\bar{\vx}_N,\vy)-\ell(\vx,\bar{\vy}_N)\leq \frac{2L_1 D_{\Phi}(\vz,\vz_0)}{\alpha\beta N}+\frac{D_{\Phi}(\vz,\vz_0)}{(1-\beta)\sigma_0N^2}, 
        \end{equation*}
        where 
         $\bar{\vz}_N = (\bar{\vx}_N,\bar{\vy}_N)$ is given by $\bar{\vz}_N=\frac{1}{\sum_{k=0}^{N-1} \eta_k}\sum_{k=0}^{N-1} \eta_k\vz_{k+1}$.
        \item\label{item:first_ls_scsc} Under Assumption~\ref{assum:strongly_monotone}, set $\hat{\eta}_k = {\eta_{k-1}}/{(1+\mu\eta_{k-1})}$. Then we have
        \begin{equation*}
            D_{\Phi}(\vz^*,\vz_N)\leq \frac{2C}{2-\alpha}D_{\Phi}(\vz^*,\vz_0)  \left(1+\frac{\mu\alpha\beta}{2L_1}\right)^{-N},
        \end{equation*}
        where $C=\exp\left(\frac{\alpha\beta}{2(1-\beta)\sigma_0L_1}\frac{\alpha\beta\mu/(2L_1)}{1+\alpha\beta\mu/(2L_1)}\right)$ is a constant that only depends on $\alpha,\beta,\sigma_0,L_1$ and $\mu$. 
         \vspace{2mm}
        \item\label{item:first_ls_complexity} In both cases of (a) and (b) above, the total number of calls to the optimistic subsolver after $N$ iterations can be bounded by {$ \max\{N, 2N-1+\log_{\frac{1}{\beta}}\bigl(\frac{2\sigma_0 L_1}{\alpha \beta}\bigr)\}$.}
    \end{enumerate}    
\end{theorem}
\begin{proof}
    Recall that $\mathcal{B} =\{k: \eta_k < \sigma_k\}$. 
    Since $\{\eta_k\}_{k\geq 0}$ is determined by Algorithm~\ref{alg:ls}, we observe that $\eta_k=\sigma_k $ when $k \notin \mathcal{B}$, and $\eta_k>(\alpha\beta)/(2L_1)$ when $k\in \mathcal{B}$ by Lemma~\ref{lem:beta_optimal_point}. Moreover, in Algorithm~\ref{alg:2nd_optimistic}, we set $\sigma_k = \eta_{k-1}/\beta$. 
From these observations, we shall prove by induction that for all $k\geq 0$,  
\begin{equation}\label{eq:1st_step_lb}
    \eta_k \geq \min\left\{\frac{\sigma_0}{\beta^{k}}, \frac{\alpha\beta}{2L_1}\right\}.
\end{equation}
It is easy to verify that \eqref{eq:1st_step_lb} holds for $k=0$. 
Now assume that \eqref{eq:1st_step_lb} holds for all $0\leq k \leq l-1$. \blue{If $l\in \mathcal{B}$, we have $\eta_l\geq (\alpha\beta)/(2L_1)$ and hence \eqref{eq:1st_step_lb} holds. Otherwise, we have}
\begin{equation*}
    \eta_l = \sigma_l = \frac{\eta_{l-1}}{\beta} \geq \frac{1}{\beta}\min\left\{\frac{\sigma_0}{\beta^{l-1}}, \frac{\alpha\beta}{2L_1}\right\}=\min\left\{\frac{\sigma_0}{\beta^{l}}, \frac{\alpha}{2L_1}\right\} \blue{\geq \min\left\{\frac{\sigma_0}{\beta^{l}}, \frac{\alpha\beta}{2L_1}\right\}}.
\end{equation*}
In both cases, we can see that \eqref{eq:1st_step_lb} is satisfied for $k=l$, and hence the claim is proved by induction. 

\myalert{Proof of Part~\ref{item:first_ls_cc}.} From \eqref{eq:1st_step_lb}, we have
\begin{equation}\label{eq:1st_step_inverse}
    \frac{1}{\eta_k} \leq \max\left\{\frac{\beta^{k}}{\sigma_0}, \frac{2L_1}{\alpha\beta}\right\} \leq \frac{\beta^{k}}{\sigma_0}+\frac{2L_1}{\alpha\beta}.
\end{equation}
By summing \eqref{eq:1st_step_inverse} over $k=0,\ldots,N-1$ and using the fact that $\beta<1$, we obtain
\begin{equation}\label{eq:1st_step_inverse_sum}
    \sum_{k=0}^{N-1} \frac{1}{\eta_k} \leq \frac{1}{(1-\beta)\sigma_0}+ \frac{2L_1}{\alpha\beta}N.
\end{equation}
Now combining \eqref{eq:1st_step_inverse_sum} with Cauchy-Schwarz inequality leads to 
\(\sum_{k=0}^{N-1} \eta_k \geq N^2 \biggl({\sum_{k=0}^{N-1} \frac{1}{\eta_k}}\biggr)^{-1} \geq  \left(\frac{1}{(1-\beta)\sigma_0N^2}+ \frac{2L_1}{\alpha\beta N}\right)^{-1}\).
The rest follows from Proposition~\ref{prop:GOMD}.  

\myalert{Proof of Part~\ref{item:first_ls_scsc}.} Note that
$
    \log \Bigl(\prod_{k=0}^{N-1} (1+\eta_k\mu)\Bigr) = \sum_{k=0}^{N-1} \log(1+\eta_k\mu) = \sum_{k=0}^{N-1}\log\Bigl(1+\frac{1}{1/(\eta_k\mu)}\Bigr). 
$ Moreover, $\log(1+\frac{1}{t})$ is  convex and monotonically decreasing on $\mathbb{R}_{++}$. Hence, by Jensen's inequality and \eqref{eq:1st_step_inverse_sum} we have 
\begin{equation}\label{eq:1st_ls_log}
    \begin{aligned}
        \log \biggl(\prod_{k=0}^{N-1} (1+\eta_k\mu)\biggr) &\geq N \log\biggl(1+\mu\biggl(\frac{1}{N}\sum_{k=0}^{N-1}\frac{1}{\eta_k}\biggr)^{-1}\biggr) \\
        &\geq N \log \left(1+\frac{\alpha \beta \mu}{2L_1}\right)+N\log\left(1-\frac{c_1}{1+c_1}\frac{1}{1+c_2N}\right),
    \end{aligned}
\end{equation}
where $c_1:=\nicefrac{\alpha \beta \mu}{2L_1}$ and $c_2:=\nicefrac{2(1-\beta)\sigma_0L_1}{\alpha\beta}$. 
Moreover, by using the elementary inequality $\log(1+x) \geq x/(1+x)$ for all $x\geq -1$, we have 
\begin{equation}\label{eq:1st_ls_constant}
    N\log\Bigl(1-\frac{c_1}{1+c_1}\frac{1}{1+c_2N}\Bigr) \geq -\frac{c_1N}{1+(1+c_1)c_2N} \geq -\frac{c_1}{(1+c_1)c_2} = -\frac{\alpha\beta}{2(1-\beta)\sigma_0L_1}\frac{\alpha\beta\mu/(2L_1)}{1+\alpha\beta\mu/(2L_1)}.
\end{equation}
Combining \eqref{eq:1st_ls_log} and \eqref{eq:1st_ls_constant}, we arrive at 
\begin{equation*}
    \prod_{k=0}^{N-1} (1+\eta_k\mu) \geq \left(1+\frac{\alpha \beta \mu}{2L_1}\right)^N \exp\left(-\frac{\alpha\beta}{2(1-\beta)\sigma_0L_1}\frac{\alpha\beta\mu/(2L_1)}{1+\alpha\beta\mu/(2L_1)}\right).
\end{equation*}
The rest follows from Proposition~\ref{prop:GOMD}. 

\myalert{Proof of Part~\ref{item:first_ls_complexity}.} By Proposition~\ref{prop:ls_total}, the number of calls to the optimistic subsolver at the $k$-th iteration is given by $\log_{1/\beta}\frac{\sigma_k}{\eta_k}+1$. Also note that $\sigma_k=\eta_{k-1}/\beta$ for $k\geq 1$. Hence, the total number of calls after $N$ iterations can be bounded by 
\begin{align*}
    N_{\mathrm{total}} = \sum_{k=0}^{N-1} \left(\log_{\frac{1}{\beta}}\frac{\sigma_k}{\eta_k}+1\right) &= N + \log_{\frac{1}{\beta}}\frac{\sigma_0}{\eta_0} + \sum_{k=1}^{N-1} \log_{\frac{1}{\beta}}\frac{\eta_{k-1}}{\beta\eta_k} \\
    &= 2N-1 +  \log_{\frac{1}{\beta}}{\sigma_0} - \log_{\frac{1}{\beta}}{\eta_0} + \sum_{k=1}^{N-1} \left(\log_{\frac{1}{\beta}}{\eta_{k-1}} - \log_{\frac{1}{\beta}}{\eta_{k}}\right) \\
    &= 2N-1 + \log_{\frac{1}{\beta}} \frac{\sigma_0}{\eta_{N-1}}. 
\end{align*}
Since $\eta_{N-1}\geq \min\left\{\frac{\sigma_0}{\beta^{N-1}}, \frac{\alpha\beta}{2L_1}\right\}$, we can further upper bound $N_{\mathrm{total}} \leq 2N-1+\max\{-(N-1),\log_{\frac{1}{\beta}}\bigl(\frac{2\sigma_0 L_1}{\alpha \beta}\bigr)\} = \max\{N, 2N-1+\log_{\frac{1}{\beta}}\bigl(\frac{2\sigma_0 L_1}{\alpha \beta}\bigr)\}$. This completes the proof. 
\end{proof}

Asymptotically, Theorem~\ref{thm:OMD_first_ls} shows that the line search scheme achieves the same convergence rates as the fixed step size scheme where the step size in all iterations are chosen as $(\alpha\beta)/(2L_1)$, although it does not require any prior information of the Lipschitz constant $L_1$. Moreover, it shows the total number of calls to a subsolver of \eqref{eq:OMD_first} required by the line search scheme is $2N+\mathcal{O}(\log(\sigma_0L_1))$. Hence, the average number of calls per iteration can be bounded by a constant close to 2 for large~$N$.  

\section{Second-order generalized optimistic method}\label{sec:second_order}

In this section, we instantiate Algorithm~\ref{alg:GOMD} with a second-order oracle to derive a novel \textit{second-order optimistic} method for solving the saddle point problem in \eqref{eq:minimax}. For technical reasons, throughout the section, we restrict ourselves to the case where the distance-generating function $\Phi$ is $L_{\Phi}$-smooth on $\mathcal{Z}$, i.e., $\nabla \Phi$ is $L_{\Phi}$-Lipschitz continuous. We also require the following assumption on the smooth component of the objective in \eqref{lem:saddle_point} denoted by $f$. 
\begin{assumption}\label{assum:second_order}
    The operator $F$ defined in \eqref{eq:def_of_F_H} is second-order $L_2$-smooth on $\mathcal{Z}$. Also, we have access to an oracle that returns $F(\vz)$ and $DF(\vz)$ for any given $\vz$. 
\end{assumption}
Under Assumption~\ref{assum:second_order}, it follows from \eqref{eq:residual_F} that
\begin{equation}\label{eq:2nd_smooth}
    \|F(\vz)-F(\vz_{k})-DF(\vz_k)(\vz-\vz_k)\|_{*}\leq \frac{L_2}{2}\|\vz-\vz_k\|^2, \quad \forall \vz \in \mathcal{Z},
\end{equation}
which suggests we can choose the approximation function as $P(\vz;\mathcal{I}_k):=T^{(1)}(\vz;\vz_k)=F(\vz_k)+DF(\vz_k)(\vz-\vz_k)$. Note that computing this approximation function requires access to the operator $DF(\cdot)$, which involves the second-order derivative of the function $f$. Therefore, we refer to the resulting algorithm as the second-order generalized optimistic method. According to the update rule in \eqref{eq:GOM_inclusion}, the update of the proposed second-order optimistic method can be written as
\begin{equation}\label{eq:OMD_2nd_ls}
    0 \in   \eta_k T^{(1)}(\vz_{k+1};\vz_k)
    +\hat{\eta}_k(F(\vz_k)-T^{(1)}(\vz_k;\vz_{k-1}))+\eta_k H(\vz_{k+1})+\nabla \Phi(\vz_{k+1})-\nabla \Phi(\vz_k),
\end{equation}
where the condition in \eqref{eq:stepsize_upper_bound} on $\eta_k$ becomes 
\begin{equation}\label{eq:stepsize_2nd}
    \eta_k \|F(\vz_{k+1})-T^{(1)}(\vz_{k+1};\vz_{k}))\|_{*}\leq \frac{\alpha}{2}\|\vz_{k+1}-\vz_k\|.
\end{equation}
\blue{The inclusion problem in \eqref{eq:OMD_2nd_ls} is maximal monotone and we defer the proof to Appendix~\ref{appen:maximal}.}

To begin with, we explain how the second-order information could help accelerate convergence. Intuitively, in light of \eqref{eq:stepsize_2nd}, choosing a more accurate approximation $T^{(1)}(\cdot;\vz_k)$ allows us to take a larger step size $\eta_k$ than the first-order method in \eqref{eq:OMD_first}. Moreover, as Proposition~\ref{prop:GOMD} shows, the speed of convergence 
depends on the choice of step sizes $\{\eta_k\}_{k\geq 0}$, and larger step sizes lead to faster convergence.  
More precisely, by~\eqref{eq:2nd_smooth}, the left-hand side of \eqref{eq:stepsize_2nd} can be bounded above by $\frac{1}{2}\eta_k L_2\|\vz_{k+1}-\vz_k\|^2$, suggesting that we can pick our step size $\eta_k$ as large as 
\begin{equation}\label{eq:approx_step_2nd}
    \eta_k  \approx \frac{\alpha}{L_2\|\vz_{k+1}-\vz_k\|}.
\end{equation}
Hence, as $\{\vz_k\}_{k\geq 0}$ approach the optimal solution and the displacement $\|\vz_{k+1}-\vz_k\|$ becomes smaller, we can afford to select a larger step size and accelerate convergence. 

The above argument for faster convergence relies crucially on the premise that the step size $\eta_k$ is adaptive to the displacement $\|\vz_{k+1}-\vz_k\|$, i.e., at least on the order of $\frac{1}{\|\vz_{k+1}-\vz_k\|}$. As mentioned in Section~\ref{sec:GOMD}, a major challenge here is the interdependence between $\eta_k$ and $\vz_{k+1}$: the iterate $\vz_{k+1}$ depends on $\eta_k$ via \eqref{eq:OMD_2nd_ls}, while the step size $\eta_k$ depends on $\vz_{k+1}$ as illustrated in \eqref{eq:stepsize_2nd} and \eqref{eq:approx_step_2nd}. Hence, we need to simultaneously select $\eta_k$ and $\vz_{k+1}$ to satisfy the above requirements. This goal can be achieved by using the line search scheme discussed in Section~\ref{subsec:line_search}. 
 In addition, similar to the line search scheme for the first-order optimistic method, 
 we use a warm-start strategy and choose the initial trial step size $\sigma_{k+1}$ based on the previously accepted step size $\eta_k$. Specifically, we set $\sigma_{k+1} = \eta_k/\beta$ in the convex-concave setting and set $\sigma_{k+1}=\eta_{k}\sqrt{1+\eta_{k}\mu}/\beta$ in the strongly-convex-strongly-concave setting.   
As shown in Section~\ref{subsec:complexity_LS}, our choice of $\eta_{k+1}$ is crucial for ensuring that the average number of line search steps per iteration is bounded by a constant. The second-order optimistic method is formally described in Algorithm~\ref{alg:2nd_optimistic}.

Two remarks on the line search scheme in Algorithm~\ref{alg:2nd_optimistic} follow. 
First, by using the results in Lemma~\ref{lem:backtrack_terminate}, we can guarantee that the line search scheme always terminates in finite steps, and hence the step sizes $\{\eta_k\}_{k\geq 0}$ are well-defined. To see why Lemma~\ref{lem:backtrack_terminate} applies, note that for any $\|\vz-\vz_k\|\leq \delta$, by \eqref{eq:2nd_smooth} we have $   \|F(\vz)-T^{(1)}(\vz;\vz_k)\|_* \leq (\nicefrac{L_2}{2})\|\vz-\vz_k\|^2 \leq (\nicefrac{L_2\delta}{2})\|\vz-\vz_k\|$. Hence, the condition in Lemma~\ref{lem:backtrack_terminate} is indeed satisfied.
Second, at the $k$-th iteration,  the optimistic subsolver in this case is required to solve a subproblem of $\vz$ in the form of 
\begin{equation}\label{eq:2nd_order_subproblem}
    0 \in \eta T^{(1)}(\vz;\vz_{k})+\vv_k+\eta H(\vz) + \nabla \Phi(\vz)-\nabla \Phi(\vz_k)
\end{equation}
for given $\vv_k=\hat{\eta}_k(F(\vz_k)-T^{(1)}(\vz_k;\vz_{k-1}))$ and $\eta>0$ (cf. Definition~\ref{def:subsolver}). 
Note that $T^{(1)}(\vz;\vz_k)$ is an affine operator, and hence this subproblem can be solved with respect to $\vz$ efficiently by standard solvers in some special cases. For instance, in the Euclidean setup where $\Phi(\vz)=\frac{1}{2}\|\vz\|^2_2$, the subproblem \eqref{eq:2nd_order_subproblem} is equivalent to an affine variational inequality \cite{Facchinei2003} for constrained saddle point problems (where $h_1\equiv 0$ on $\mathcal{X}$ and $h_2\equiv 0$ on $\mathcal{Y}$), and further reduces to a system of linear equations for unconstrained saddle point problems (where $\mathcal{X}=\mathbb{R}^m$ and $\mathcal{Y}=\mathbb{R}^n$). 
\begin{algorithm}[!t]\small
    \caption{Second-order optimistic method}\label{alg:2nd_optimistic}
    \begin{algorithmic}[1]
        \STATE \textbf{Input:} initial point $\vz_0\in \mathcal{Z}$, initial trial step size $\sigma_0$, strong convexity parameter $\mu\geq 0$, line search parameters $\alpha,\beta \in(0,1)$  
        \STATE \textbf{Initialize:} set $\vz_{-1}\leftarrow \vz_0$ and $P(\vz_0;\mathcal{I}_{-1})\leftarrow F(\vz_0)$
        \FOR{iteration $k=0,\ldots,N-1$}
            \STATE Set $\hat{\eta}_k=\eta_{k-1}/(1+\eta_{k-1}\mu)$
            \STATE Select $\eta_{k}$ by Algorithm~\ref{alg:ls} with $\sigma=\sigma_k$, $\vz^-=\vz_k$, $P(\vz)=T^{(1)}(\vz;\vz_k)$, $\vv^{-}=\hat{\eta}_k(F(\vz_k)-T^{(1)}(\vz_k;\vz_{k-1}))$
            \STATE Compute $\vz_{k+1}$ by solving 
                $
                0\in \eta_k T^{(1)}(\vz;\vz_k)+\hat{\eta}_k(F(\vz_k)-T^{(1)}(\vz_k;\vz_{k-1}))+\eta_k H(\vz) + \nabla \Phi(\vz)-\nabla \Phi(\vz_k)
            $
                \STATE Set $\sigma_{k+1} \leftarrow \eta_k\sqrt{1+\eta_k\mu}/\beta$
        \ENDFOR
    \end{algorithmic}
\end{algorithm}

\subsection{Intermediate results}

To establish the convergence properties of the proposed second-order optimistic method, we first state a few intermediate results that will be used in the following sections.

Throughout the rest of the analysis, we define $ \mathcal{B} =\{k: \eta_k < \sigma_k\}$, i.e., the set of iteration indices where the line search scheme backtracks the step size. To begin with, in the following lemma we establish a lower bound on $\eta_k$ when $k \in \mathcal{B}$. 
\begin{lemma}\label{lem:beta_optimal_point_2nd}
    Let $\{\eta_k\}_{k \geq 0}$ be the step sizes in \eqref{eq:OMD_2nd_ls} generated by Algorithm~\ref{alg:2nd_optimistic}. Suppose that $F$ is second-order $L_2$-smooth and $\Phi$ is $L_{\Phi}$-smooth on $\mathcal{Z}$. Further define $\vv_k:=\hat{\eta}_k(F(\vz_k)-T^{(1)}(\vz_k;\vz_{k-1}))$. If $k\in \mathcal{B}$,
    then we have 
    \begin{equation*}
        \eta_k \geq \frac{\alpha\beta^2/L_2}{L_{\Phi}^{3/2}\|\vz_{k+1}-\vz_k\|+(\beta+L_{\Phi}^{1/2})\|\vv_k\|_*}.
    \end{equation*}
\end{lemma}
\begin{proof}
    See Appendix~\ref{appen:beta_optimal_2nd}. 
\end{proof}

By combining Lemma~\ref{lem:beta_optimal_point_2nd} and Lemma~\ref{lem:sum_of_square}, we further present the following result.

\begin{lemma}\label{lem:bound_on_sum_square_inverse}
    We have 
    $\sum_{k\in \mathcal{B}} \frac{\prod_{l=0}^{k-1} (1+\eta_l\mu)}{\eta_k^2}  \leq \gamma_2^2 L_2^2 D_{\Phi}(\vz^*,\vz_0)$, 
    where $\gamma_2$ is an absolute constant defined as
    \begin{equation}\label{eq:def_gamma}
        \gamma_2= \sqrt{\frac{2}{1-\alpha}}\left(\frac{L_{\Phi}^{3/2}}{\alpha \beta^2} + \frac{\beta+L_{\Phi}^{1/2}}{2\beta^2}\right).
    \end{equation} 
\end{lemma}

\begin{proof}
   We first apply Lemma~\ref{lem:beta_optimal_point_2nd} for $k\in \mathcal{B}$. 
    Note that by the choice of $\hat{\eta}_k$ and \eqref{eq:stepsize_2nd}, we can upper bound 
    \(\|\vv_k\|_* = \frac{\eta_{k-1}}{1+\eta_{k-1}\mu}\|F(\vz_k)-T^{(1)}(\vz_k;\vz_{k-1})\|_* \leq \frac{\alpha \|\vz_k-\vz_{k-1}\|}{2(1+\eta_{k-1}\mu)}.\)
    Together with Lemma~\ref{lem:beta_optimal_point_2nd}, this leads to 
    \begin{align*}
        \frac{1}{\eta_k} \leq \frac{L_2}{\alpha\beta^2}\left(L_{\Phi}^{3/2}\|\vz_{k+1}-\vz_k\|+(\beta+L_{\Phi}^{1/2})\|\vv_k\|_*\right) \leq c_1L_2\|\vz_{k+1}-\vz_k\| + c_2L_2\frac{\|\vz_{k}-\vz_{k-1}\|}{1+\eta_{k-1}\mu},
    \end{align*}
    where we define $c_1:=L_{\Phi}^{3/2}/(\alpha\beta^2)$ and $c_2:=(\beta+L_{\Phi}^{1/2})/(2\beta^2)$. 
    Furthermore, we square both sides of the above inequality and use Young's inequality to get 
    \begin{equation}\label{eq:after_Young}
        \frac{1}{\eta_k^2} \leq (c_1^2+c_1c_2) L_2^2\|\vz_{k+1}-\vz_k\|^2+{(c_2^2+c_1c_2)L_2^2} \frac{\|\vz_{k}-\vz_{k-1}\|^2}{(1+\eta_{k-1}\mu)^2}.  
\end{equation}
Multiplying both sides of \eqref{eq:after_Young} by $\prod_{l=0}^{k-1} (1+\eta_l\mu)$, 
    we have $
    \frac{1}{\eta_k^2}{\prod_{l=0}^{k-1} (1+\eta_l\mu)} 
        \leq (c_1^2+c_1c_2) L_2^2 \|\vz_{k+1}-\vz_k\|^2\prod_{l=0}^{k-1} (1+\eta_l\mu)
        +(c_2^2+c_1c_2) L_2^2\|\vz_{k}-\vz_{k-1}\|^2\prod_{l=0}^{k-2} (1+\eta_l\mu)$. 
    By summing both sides of the inequality above over 
    $k \in \mathcal{B}$ and applying Lemma~\ref{lem:sum_of_square}, we get the desired result in Lemma~\ref{lem:bound_on_sum_square_inverse}.  
\end{proof}

Finally, we build on Lemma~\ref{lem:bound_on_sum_square_inverse} to control step sizes $\eta_k$ for all $k \geq 0$.  
\begin{lemma}\label{lem:induction_bound}
    For any $k \geq 0$, we have 
    \begin{equation}\label{eq:induction_bound}
        \frac{1}{\eta_k^2} \prod_{l=0}^{k-1} (1+\eta_l\mu) \leq \max\left\{\gamma_2^2 L_2^2 D_{\Phi}(\vz^*,\vz_0), \frac{\beta^{2k}}{\sigma^2_0} \right\}. 
    \end{equation}
\end{lemma}
\begin{proof}
    We shall prove Lemma~\ref{lem:induction_bound} by induction. For the base case where $k=0$, we consider two subcases. If $0 \notin \mathcal{B}$, then we have $\eta_0 = \sigma_0$, which directly implies \eqref{eq:induction_bound}. Otherwise, since $0\in \mathcal{B}$, we can use Lemma~\ref{lem:bound_on_sum_square_inverse} to obtain that $\frac{1}{\eta_0^2} \leq \sum_{k\in \mathcal{B}} \left(\frac{1}{\eta_k^2} \prod_{l=0}^{k-1} (1+\eta_l\mu) \right) \leq \gamma_2^2 L_2^2 D_{\Phi}(\vz^*,\vz_0)$. This proves \eqref{eq:induction_bound} for $k=0$. 

    Now assume that \eqref{eq:induction_bound} holds for $k = s$ where $s\geq 0$. If $s+1\in \mathcal{B}$,  then similarly we have $\frac{1}{\eta_{s+1}^2} \prod_{l=0}^{s} (1+\eta_l\mu) \leq \sum_{k\in \mathcal{B}} \left(\frac{1}{\eta_k^2} \prod_{l=0}^{k-1} (1+\eta_l\mu) \right) \leq \gamma_2^2 L_2^2 D_{\Phi}(\vz^*,\vz_0)$ by Lemma~\ref{lem:bound_on_sum_square_inverse}, which implies~\eqref{eq:induction_bound}. Otherwise, if $s+1\notin \mathcal{B}$, then by definition we have $\eta_{s+1} = \sigma_{s+1} = \eta_{s} \sqrt{1+2\eta_{s}\mu}/\beta$, and furthermore
\begin{equation*}
    \frac{1}{\eta_{s+1}^2} \prod_{l=0}^{s} (1+\eta_l\mu) = \frac{\beta^2}{\eta^2_{s}}\prod_{l=0}^{s-1} (1+\eta_l\mu) \leq \beta^2\max\left\{\gamma_2^2 L_2^2 D_{\Phi}(\vz^*,\vz_0), \frac{\beta^{2s}}{\sigma^2_0} \right\}, 
\end{equation*}
where we used the induction hypothesis in the last inequality. 
In both cases, we prove that \eqref{eq:induction_bound} is satisfied for $k=s+1$, and hence the claim follows by induction.
\end{proof}

\subsection{Convergence analysis: convex-concave case}
Now we turn to the convergence analysis of Algorithm~\ref{alg:2nd_optimistic} and start with the case where the smooth component $f$ of the objective in \eqref{eq:minimax} is merely convex-concave. 

\begin{lemma}\label{lem:bound_on_sum_square_inverse_convex}
    Let $\{\eta_k\}_{k \geq 0}$ be the step sizes in \eqref{eq:OMD_2nd_ls} generated by Algorithm~\ref{alg:2nd_optimistic}. Then, 
    we have %
    \(
        {\sum_{k=0}^{N-1} \frac{1}{\eta_k} \leq \frac{1}{1-\beta } \left(\frac{1}{\sigma_0}+\gamma_2 L_2 \sqrt{D_{\Phi}(\vz^*,\vz_0)} \sqrt{N} \right)}
    \), where  $\gamma_2$  is defined in \eqref{eq:def_gamma}. 
\end{lemma}
\begin{proof}
Recall that $ \mathcal{B} =\{k: \eta_k < \sigma_k\}$, i.e., the set of iteration indices where the line search backtracks the step size.  %
Moreover, in Algorithm~\ref{alg:ls}, we have  $\eta_k=\sigma_k$ when $k \notin \mathcal{B}$ and $\sigma_k = \eta_{k-1}/\beta$ for $k \geq 1$. Hence, we can obtain that  
\begin{equation*}
    \sum_{k=0}^{N-1} \frac{1}{\eta_k} =  \sum_{k \in \mathcal{B}} \frac{1}{\eta_k} +  \sum_{k\notin \mathcal{B}} \frac{1}{\eta_k} \leq \frac{1}{\sigma_0} + \sum_{k \in \mathcal{B}} \frac{1}{\eta_k} +  \sum_{k \geq 1, k\notin \mathcal{B}} \frac{\beta}{\eta_{k-1}} \leq \frac{1}{\sigma_0} + \sum_{k \in \mathcal{B}} \frac{1}{\eta_k} +  \sum_{k=0}^{N-1} \frac{\beta}{\eta_{k}}. 
\end{equation*}
Combining the left-hand side with the last term in the right-hand side, we obtain 
$\sum_{k=0}^{N-1} \frac{1}{\eta_k} \leq \frac{1}{1-\beta}\frac{1}{\sigma_0} + \frac{1}{1-\beta} \sum_{k \in \mathcal{B}} \frac{1}{\eta_k} \leq  \frac{1}{1-\beta}\frac{1}{\sigma_0} + \frac{1}{1-\beta} \sqrt{N \sum_{k \in \mathcal{B}} \frac{1}{\eta^2_k}}$, 
where we use Cauchy-Schwarz inequality in the last inequality. Finally, Lemma~\ref{lem:bound_on_sum_square_inverse_convex} follows from the above inequality and Lemma~\ref{lem:bound_on_sum_square_inverse}. 
\end{proof}

By instantiating Proposition~\ref{prop:GOMD}, we obtain the following convergence result.

\begin{theorem}\label{thm:OMD_2nd_ls_convex}
    Suppose Assumptions~\ref{assum:monotone} and~\ref{assum:second_order} hold and the distance-generating function $\Phi$ is $L_{\Phi}$-smooth.
    Let $\{\vz_k\}_{k\geq 0}$ be the iterates generated by Algorithm~\ref{alg:2nd_optimistic}, where 
    $\hat{\eta}_k = \eta_{k-1}$, $\sigma_0>0$,  $\alpha\in (0,1)$, and $\beta\in (0,1)$.  
    Then we have  
    $D_{\Phi}(\vz^*,\vz_k)\leq \frac{2}{2-\alpha} D_{\Phi}(\vz^*,\vz_0)$ for $k\geq 0$.
    Moreover, for any $\vz = (\vx,\vy)\in \mathcal{X}\times \mathcal{Y}$,
        \begin{equation}\label{eq:gap_bound_2nd}
            \ell(\bar{\vx}_N,\vy)-\ell(\vx,\bar{\vy}_N) \leq
            \frac{\gamma_2 L_2\sqrt{D_{\Phi}(\vz^*,\vz_0)} D_{\Phi}(\vz,\vz_0) }{(1-\beta) N^{\frac{3}{2}}}+ \frac{D_{\Phi}(\vz,\vz_0)}{(1-\beta)\sigma_0 N^2},
        \end{equation}
         where $\bar{\vz}_N = (\bar{\vx}_N,\bar{\vy}_N)$ is given by  $\bar{\vz}_N=\frac{1}{\sum_{k=0}^{N-1} \eta_k}\sum_{k=0}^{N-1} \eta_k\vz_{k+1}$ and $\gamma_2$ is an absolute constant defined in \eqref{eq:def_gamma}.
\end{theorem}
\begin{proof}
     By Cauchy-Schwarz inequality, we have  
$\sum_{k=0}^{N-1} \eta_k
   \sum_{k=0}^{N-1} \frac{1}{\eta_k} 
   \geq N^2$.
Together with Lemma~\ref{lem:bound_on_sum_square_inverse_convex}, this implies that 
\(\left(\sum_{k=0}^{N-1} \eta_k\right)^{-1} \! \leq \frac{1}{N^2} \sum_{k=0}^{N-1} \frac{1}{\eta_k}\leq  \frac{1}{(1-\beta) \sigma_0 N^2 } + \frac{\gamma_2 L_2 \sqrt{D_{\Phi}(\vz^*,\vz_0)}}{(1-\beta)N^{\frac{3}{2}}}.\) 
The rest follows from Proposition~\ref{prop:GOMD}. 
\end{proof}

Theorem~\ref{thm:OMD_2nd_ls_convex} shows that the second-order optimistic method converges at a rate of $\bigO(N^{-3/2})$ in terms of the primal-dual gap, which is faster than the rate of $\bigO(N^{-1})$ for first-order methods. As a corollary, 
to obtain a solution with a primal-dual gap of $\epsilon$, 
the proposed second-order optimistic method requires at most $\mathcal{O}(1/\epsilon^{2/3})$ iterations. Note that the total number of subsolver calls in the line search procedure after $N$ iterations can also be explicitly controlled, as we discuss later in Theorem~\ref{thm:steps_upper_2nd}.

\noindent\textbf{Comparison with \cite{Monteiro2012} and \cite{Bullins2022higher}}.
Similar iteration complexity bounds have been reported in \cite{Monteiro2012,Bullins2022higher} for extragradient-type second-order methods. 
In comparison, our method is based on a different algorithmic framework.
Moreover, \cite{Monteiro2012} only considers the Euclidean setup, while \cite{Bullins2022higher} only provides implementation details for the special case of unconstrained problems under stronger assumptions. 
Finally, as we shall discuss in Section~\ref{subsec:complexity_LS}, our line search scheme requires fewer calls to the subproblem solver in total compared to the approaches in \cite{Monteiro2012,Bullins2022higher}.

\subsection{Convergence analysis: strongly-convex-strongly-concave case}
\label{subsec:2nd_sc_sc}
We proceed to the case where the smooth component of the objective in \eqref{eq:minimax} is $\mu$-strongly-convex-strongly-concave.
We first define a positive decreasing sequence $\{\zeta_k\}_{k\geq 0}$ by 
    \begin{equation}\label{eq:def_zeta}
        \zeta_k = 
        \begin{cases}
            1, & \text{if } k=0;\\
            \prod_{l=0}^{k-1} (1+\eta_l\mu)^{-1}, & \text{if } k\geq 1.
        \end{cases}
\end{equation}
According to the result in Part (b) of Proposition \ref{prop:GOMD}, the Bregman distance to the optimal solution $ {D_{\Phi}(\vz^*,\vz_k)}$ can be bounded above by 
\begin{equation}\label{eq:bound_with_zeta}
    {D_{\Phi}(\vz^*,\vz_k)} \leq \frac{2{D_{\Phi}(\vz^*,\vz_0)}}{2-\alpha}\ \zeta_k,
\end{equation}
where $\alpha\in(0,1)$ is a line search parameter. As a result, 
${D_{\Phi}(\vz^*,\vz_k)}\leq \epsilon$ when we have $\zeta_k \leq  \frac{(2-\alpha)\epsilon}{2{D_{\Phi}(\vz^*,\vz_0)}}$. 
Hence, in the following we will characterize the convergence behavior of the sequence $\{\zeta_k\}_{k\geq 0}$, which immediately implies that of ${D_{\Phi}(\vz^*,\vz_k)}$. To begin with, we present the following lemma, which is a direct corollary of Lemma~\ref{lem:bound_on_sum_square_inverse}.

\begin{lemma}\label{lem:bound_on_sum_square_inverse_strongly_convex}
    Let $\{\eta_k\}_{k \geq 0}$ be the step sizes in \eqref{eq:OMD_2nd_ls} generated by Algorithm~\ref{alg:2nd_optimistic}. 
    Then, we have 
    $\sum_{k=0}^{N-1} \frac{1}{\zeta_k \eta_k^2}  \leq \frac{1}{(1-\beta^2) \sigma_0^2} + \frac{\gamma_2^2 L_2^2}{1-\beta^2} D_{\Phi}(\vz^*,\vz_0)$. 
\end{lemma}
\begin{proof}
    Recall $ \mathcal{B} =\{k: \eta_k < \sigma_k\}$, i.e., the set of iteration indices where the line search backtracks. 
    Since $\{\eta_k\}_{k\geq 0}$ is determined by Algorithm~\ref{alg:ls}, we have $\eta_k=\sigma_k$ when $k \notin \mathcal{B}$, where $\sigma_k = \eta_{k-1}\sqrt{1+\eta_{k-1}\mu}/\beta$ for $k \geq 1$. Hence, we can bound 
    \begin{equation*}
        \sum_{k\notin \mathcal{B}} \frac{1}{\zeta_k \eta_k^2} =\sum_{k\notin \mathcal{B}} \frac{1}{\zeta_k \sigma_k^2} \leq \frac{1}{\sigma_0^2} + \sum_{k \geq 1, k\notin \mathcal{B}} \frac{\beta^2}{\zeta_k \eta^2_{k-1}(1+\eta_{k-1}\mu)} =  \frac{1}{\sigma_0^2} + \sum_{k \geq 1, k\notin \mathcal{B}} \frac{\beta^2}{\zeta_{k-1} \eta^2_{k-1}},
    \end{equation*}
    where we used $\zeta_{k-1} = \zeta_k(1+\eta_{k-1}\mu)$ in the last equality. This further leads to  
    \begin{align*}
    \sum_{k=0}^{N-1} \frac{1}{\zeta_k \eta^2_k} =  \sum_{k \in \mathcal{B}} \frac{1}{\zeta_k \eta_k^2} +  \sum_{k\notin \mathcal{B}} \frac{1}{\zeta_k \eta_k^2}
    &\leq \frac{1}{\sigma_0^2} + \sum_{k \in \mathcal{B}} \frac{1}{\zeta_k \eta^2_k} +  \sum_{k \geq 1, k\notin \mathcal{B}} \frac{\beta^2}{\zeta_{k-1} \eta^2_{k-1}} \\
    &\leq \frac{1}{\sigma_0^2} + \sum_{k \in \mathcal{B}} \frac{1}{\zeta_k \eta^2_k} +  \sum_{k=0}^{N-1} \frac{\beta^2}{\zeta_{k} \eta^2_{k}}. 
\end{align*}
Combining the left-hand side with the last term in the right-hand side, we obtain 
$\sum_{k=0}^{N-1} \frac{1}{\zeta_k \eta_k^2} \leq \frac{1}{1-\beta^2}\frac{1}{\sigma_0^2} + \frac{1}{1-\beta^2} \sum_{k \in \mathcal{B}} \frac{1}{\zeta_k\eta_k^2}$. 
    By applying Lemma~\ref{lem:bound_on_sum_square_inverse}, we get the desired result in Lemma~\ref{lem:bound_on_sum_square_inverse_strongly_convex}. 
\end{proof}
By using Lemma~\ref{lem:bound_on_sum_square_inverse_strongly_convex}, we characterize the dynamic of the sequence $\{\zeta_k\}_{k\geq 0}$ in the following lemma. 
\begin{lemma}\label{lem:multiplying_factor}
    Let $\{\eta_k\}_{k \geq 0}$ be the step sizes in \eqref{eq:OMD_2nd_ls} generated by Algorithm~\ref{alg:2nd_optimistic} and $\{\zeta_k\}_{k\geq 0}$ be defined in \eqref{eq:def_zeta}. Further, recall the definition of  $\gamma_2$ in \eqref{eq:def_gamma}. 
    Then for any $0 \leq k_1<k_2\leq N$, we have 
    \blue{$\frac{1}{\zeta_{k_2}} \geq \frac{1}{\zeta_{k_1}}+C_2^{-\frac{1}{2}}\left(\sum_{k=k_1}^{k_2-1} \frac{1}{\zeta_k}\right)^{\frac{3}{2}}$},
    where ${\kappa}_2(\vz_0)$ is defined in \eqref{eq:condition_number} and 
    \begin{equation}\label{eq:def_C2}
        C_2 := \frac{1}{1-\beta^2 } \left(\frac{1}{\sigma_0^2 \mu^2}+\gamma_2^2 \kappa_2^2(\vz_0)\right).
    \end{equation}
    \end{lemma}
    \begin{proof}
    By the definition of $\zeta_k$ in \eqref{eq:def_zeta}, we have $\eta_k=(\zeta_{k}/\zeta_{k+1}-1)/\mu$. 
    We apply Lemma~\ref{lem:bound_on_sum_square_inverse_strongly_convex}
    and rewrite the inequality %
    in terms of $\{\zeta_k\}_{k=0}^{N-1}$ as
    \begin{align}
        \sum_{k=0}^{N-1}  \left(\frac{\mu\zeta_{k+1}}{\zeta_k-\zeta_{k+1}}\right)^2\!\!\frac{1}{\zeta_k} &\leq \frac{1}{(1-\beta^2) \sigma_0^2} + \frac{\gamma_2^2 L_2^2}{1-\beta^2} D_{\Phi}(\vz^*,\vz_0) \nonumber\\
        \; \Leftrightarrow \;    \sum_{k=0}^{N-1}  \left(\frac{1}{\zeta^{-1}_{k+1}-\zeta^{-1}_{k}}\right)^2\!\!\frac{1}{\zeta_k^3} &\leq \frac{1}{(1-\beta^2) \sigma_0^2\mu^2} +\frac{\gamma_2^2 \kappa_2^2(\vz_0)}{1-\beta^2} = C_2, \label{eq:bound_sum_square_zeta}
    \end{align}
    where in \eqref{eq:bound_sum_square_zeta} we used the definition of ${\kappa}_2(\vz_0)$ and $C_2$. 
    Since each summand in \eqref{eq:bound_sum_square_zeta} is nonnegative, it follows that 
    $\sum_{k=k_1}^{k_2-1}  \left(\frac{1}{1/\zeta_{k+1}-1/\zeta_{k}}\right)^2\frac{1}{\zeta_k^3} \leq C_2$
    for any $k_2>k_1\geq 0$. 
    Furthermore, by applying H{\"o}lder's inequality we get
    \begin{gather*}
        \left[\sum_{k=k_1}^{k_2-1} \left(\frac{1}{\zeta_{k+1}}-\frac{1}{\zeta_k}\right)\right]^{\frac{2}{3}}
        \left[\sum_{k=k_1}^{k_2-1}  \left(\frac{1}{1/\zeta_{k+1}-1/\zeta_{k}}\right)^2\frac{1}{\zeta_k^3}\right]^{\frac{1}{3}}
        \geq \sum_{k=k_1}^{k_2-1} \frac{1}{\zeta_k}, %
    \end{gather*}
    and Lemma~\ref{lem:multiplying_factor} follows from the above two inequalities.  
\end{proof}

 By leveraging Lemma~\ref{lem:multiplying_factor}, we establish a global complexity bound for our proposed second-order method. Specifically, we show that the sequence $\{\zeta_k\}_{k\geq 0}$ halves its value after a geometrically decreasing number of iterations. Our argument is inspired by the restarting strategy in \cite{Nesterov2008,Nesterov2006a}, but we do not need to restart our algorithm here.

\begin{theorem}[Global convergence]\label{thm:2nd_strongly_convex_global}
   Suppose that Assumptions~\ref{assum:strongly_monotone} and~\ref{assum:second_order} hold and the distance-generating function $\Phi$ is $L_{\Phi}$-smooth. Let $\{\vz_k\}_{k\geq 0}$ be the iterates generated by Algorithm~\ref{alg:2nd_optimistic}, where $\hat{\eta}_k = {\eta_{k-1}}/{(1+\mu\eta_{k-1})}$, $\alpha\in (0,1)$, $\beta\in (0,1)$, and $\sigma_0>0$.  Then for any $\epsilon>0$, we have $\zeta_N \leq \frac{(2-\alpha)\epsilon}{2{D_{\Phi}(\vz^*,\vz_0)}}$ after at most the following number of iterations 
   \begin{equation}\label{eq:zeta_2nd_global_bound}
    N \leq \max\left\{\frac{1}{1-2^{-\frac{1}{3}}}C_2^{\frac{1}{3}}+ \log_2\left(\frac{2{D_{\Phi}(\vz^*,\vz_0)}}{(2-\alpha)\epsilon}\right)+1, 1\right\},   
   \end{equation}
   where $C_2$ is defined in \eqref{eq:def_C2}. 
\end{theorem}
 \begin{proof}
     Without loss of generality, we assume that $ \frac{(2-\alpha)\epsilon}{2{D_{\Phi}(\vz^*,\vz_0)}}\leq 1$; otherwise the result becomes trivial as $\zeta_0 = 1$. 
    Based on \eqref{eq:def_zeta},  $\{\zeta_k\}_{k \geq 0}$ is non-increasing in $k$. Hence, from Lemma~\ref{lem:multiplying_factor} we have
    \begin{equation*}
        \frac{1}{\zeta_{k_2}} 
        \geq \frac{1}{\zeta_{k_1}}+\frac{1}{\sqrt{C_2}}\left(\sum_{k=k_1}^{k_2-1} \frac{1}{\zeta_k} \right)^{\frac{3}{2}} 
        \geq \frac{1}{\zeta_{k_1}}+\frac{1}{\sqrt{C_2}}\left(\frac{k_2-k_1}{\zeta_{k_1}}\right)^{\frac{3}{2}}. 
    \end{equation*}
    In particular, this implies 
    $\zeta_{k_2}\leq \frac{1}{2}\zeta_{k_1}$ when we have $k_2-k_1\geq C_2^{\frac{1}{3}} \zeta_{k_1}^{\frac{1}{3}}$.  Hence, for any integer $l\geq 0$, we can prove by induction that the number of iterations $N$ required to achieve 
    $\zeta_N \leq 2^{-l}$ does not exceed 
    \begin{equation*}
       \sum_{k=0}^{l-1} \left \lceil C_2^{\frac{1}{3}} 2^{-\frac{1}{3}k}\right\rceil 
       \leq \sum_{k=0}^{l-1} \left(C_2^{\frac{1}{3}} 2^{-\frac{1}{3}k}+1\right) 
       \leq \frac{1}{1-2^{-\frac{1}{3}}}C_2^{\frac{1}{3}}+l.
    \end{equation*}
    The bound in \eqref{eq:zeta_2nd_global_bound} immediately follows by setting $l=\lceil \log_2(\frac{2{D_{\Phi}(\vz^*,\vz_0)}}{(2-\alpha)\epsilon})\rceil \leq \log_2\left(\frac{2{D_{\Phi}(\vz^*,\vz_0)}}{(2-\alpha)\epsilon}\right)+1$.  
\end{proof}

Theorem~\ref{thm:2nd_strongly_convex_global} guarantees the global convergence of our proposed second-order optimistic method, since it shows that we can achieve an arbitrary accuracy $\epsilon$ after running the number of iterations given in \eqref{eq:zeta_2nd_global_bound}. Better yet, we proceed to show that our method eventually achieves a fast local R-superlinear convergence rate. This result is formally stated in the following theorem.

\begin{theorem}[Local convergence]\label{thm:2nd_strongly_convex_local}
    {Under the same conditions as in Theorem~\ref{thm:2nd_strongly_convex_global},} for any $k\geq 0$, we have 
    ${\zeta_{k+1}}   \leq \max\left\{{\gamma_2}\kappa_2(\vz_0), \frac{\beta^{k}}{\sigma_0 \mu} \right\}
     \zeta_k^{\frac{3}{2}}$, 
        where ${\kappa}_2(\vz_0)$ and $\gamma_2$ are defined in \eqref{eq:condition_number} and \eqref{eq:def_gamma}, respectively.
\end{theorem} 
\begin{proof}
    By the definition of $\zeta_k$ in \eqref{eq:def_zeta}, we have 
    $\zeta_{k+1} = \frac{\zeta_k}{1+\eta_{k}\mu} \leq \frac{\zeta_k}{\eta_{k}\mu}$. Moreover, Lemma~\ref{lem:induction_bound} implies that $\frac{1}{\eta_k} \leq \max\left\{\gamma_2 L_2 \sqrt{D_{\Phi}(\vz^*,\vz_0)}, \frac{\beta^{k}}{\sigma_0} \right\} \prod_{l=0}^{k-1} (1+\eta_l\mu)^{-\frac{1}{2}} = \max\left\{\gamma_2 L_2 \sqrt{D_{\Phi}(\vz^*,\vz_0)}, \frac{\beta^{k}}{\sigma_0} \right\} \zeta_k^{-\frac{1}{2}}$.
    Theorem~\ref{thm:2nd_strongly_convex_local} follows from the two inequalities and the definition of $\kappa_2(\vz_0)$ in \eqref{eq:condition_number}. 
\end{proof}
Theorem~\ref{thm:2nd_strongly_convex_local} implies that the sequence $\{\zeta_k\}_{k \geq 0}$ converges to 0 at a superlinear convergence rate of order $\frac{3}{2}$ once we have $\zeta_k < \min\left\{\frac{1}{\gamma_2^2 \kappa^2_2(\vz_0)},\frac{\sigma_0^2 \mu^2}{\beta^{2k}}\right\}$. Due to the result in Theorem~\ref{thm:2nd_strongly_convex_global}, the sequence indeed eventually falls below this threshold after a finite number of iterations. Hence, in light of the bound in \eqref{eq:bound_with_zeta}, this in turn implies the local R-superlinear convergence of $D_{\Phi}(\vz^*,\vz_k)$.

By combining the global convergence result in Theorem~\ref{thm:2nd_strongly_convex_global} and the local convergence result in Theorem~\ref{thm:2nd_strongly_convex_local}, we characterize the overall iteration complexity of the second-order optimistic method in Algorithm~\ref{alg:2nd_optimistic}. Specifically, if the required accuracy $\epsilon$ is moderate, we simply use the global complexity result in Theorem~\ref{thm:2nd_strongly_convex_global}. On the other hand, if $\epsilon$ is small, we upper bound the number of iterations needed to reach the local convergence neighborhood using Theorem~\ref{thm:2nd_strongly_convex_global}. Subsequently, we require at most $\bigO(\log\log(1/\epsilon))$ additional iterations to achieve the desired accuracy. We summarize the corresponding complexity bounds in Corollary~\ref{coro:2nd_complexity_bound}. %

\begin{corollary}\label{coro:2nd_complexity_bound}
    Suppose $\epsilon$ is the required accuracy that we aim for, i.e., $D_{\Phi}(\vz^*,\vz)\leq \epsilon$. {Under the same conditions as in Theorem~\ref{thm:2nd_strongly_convex_global}}, we have the following complexity bound for Algorithm~\ref{alg:2nd_optimistic}.
    
    \begin{itemize}
        \item If $\epsilon\geq\frac{\mu^2}{(2-\alpha)\gamma^2_2 L^2_2} $, then $D_{\Phi}(\vz^*,\vz_N) \leq \epsilon$ when the number of iterations $N$ exceeds
        \begin{equation*}
            \max \left\{\frac{1}{1-2^{-\frac{1}{3}}} C_2^{\frac{1}{3}}
            + \log_2 \left(\frac{{2D_{\Phi}(\vz^*,\vz_0)}}{(2-\alpha)\epsilon}\right)+1,1\right\},
        \end{equation*}
        where $C_2$ is defined in \eqref{eq:def_C2}. 
        \item If $\epsilon<\frac{\mu^2}{(2-\alpha)\gamma^2_2 L^2_2} $, then $D_{\Phi}(\vz^*,\vz_N) \leq \epsilon$ when the number of iterations $N$ exceeds  
        \begin{equation} \label{eq:2nd_complexity_after_local}
            \begin{aligned}
                &\max \left\{\frac{1}{1-2^{-\frac{1}{3}}}C_2^{\frac{1}{3}}
                + 2\log_2 \left(\gamma_2{\kappa}_2(\vz_0)\right)+2,-\log_{\frac{1}{\beta}}(\gamma_2 \sigma_0 L_2 \sqrt{D_{\Phi}(\vz^*,\vz_0)})+1,1\right\} \\
                &+ \log_{\frac{3}{2}}\log_{2} \left(\frac{2\mu^2}{(2-\alpha)\gamma^2_2 L^2_2\epsilon}\right)+1. 
            \end{aligned}
        \end{equation}
    \end{itemize}
\end{corollary}
\begin{proof}
    Recall that $D_{\Phi}(\vz^*,\vz)\leq \epsilon$ if $\zeta_k \leq \frac{(2-\alpha)\epsilon}{2{D_{\Phi}(\vz^*,\vz_0)}}$ by \eqref{eq:bound_with_zeta}. Hence, it suffices to upper bound the number of iterations required such that the latter condition holds. Also, we only need to prove the second case where $\epsilon < \frac{\mu^2}{(2-\alpha)\gamma^2_2 L^2_2}$, as the first case directly follows from Theorem~\ref{thm:2nd_strongly_convex_global}. 
    Let $N_1$ be the smallest integer such that $\zeta_{N_1} \leq 1/(2\gamma_2^2{\kappa}_2^2(\vz_0))$ and $\frac{\beta^{N_1}}{\sigma_0} \leq \gamma_2 L_2 \sqrt{D_{\Phi}(\vz^*,\vz_0)}$. By setting $\epsilon = \frac{D_{\Phi}(\vz^*,\vz_0)}{(2-\alpha)\gamma_2^2 \kappa_2^2(\vz_0)}=\frac{\mu^2}{(2-\alpha)\gamma^2_2 L^2_2}$ in Theorem~\ref{thm:2nd_strongly_convex_global}, we obtain that the first condition is satisfied when  
    \begin{align}
        N_1 &\geq \max\left\{\frac{1}{1-2^{-\frac{1}{3}}}C_2^{\frac{1}{3}}+ \log_2\left(\frac{2\gamma_2^2 L_2^2 {D_{\Phi}(\vz^*,\vz_0)}}{\mu^2}\right)+1, 1\right\} \nonumber \\
        & = \max\left\{\frac{1}{1-2^{-\frac{1}{3}}}C_2^{\frac{1}{3}}+ 2\log_2\left(\gamma_2 \kappa_2(\vz_0)\right)+2, 1\right\}. \label{eq:bound_on_N1}
    \end{align}
    Moreover, the second condition is satisfied when $N_1 \geq -\log_{1/\beta}(\gamma_2 \sigma_0 L_2 \sqrt{D_{\Phi}(\vz^*,\vz_0)})$. Since $N_1$ is the smallest integer to satisfy both conditions, we have 
    \begin{equation*}
        N_1 \leq \max\left\{\frac{1}{1-2^{-\frac{1}{3}}}C_2^{\frac{1}{3}}+ 2\log_2\left(\gamma_2 \kappa_2(\vz_0)\right)+2, -\log_{\frac{1}{\beta}}(\gamma_2 \sigma_0 L_2 \sqrt{D_{\Phi}(\vz^*,\vz_0)})+1,1\right\}.
    \end{equation*}
    Furthermore, since $\frac{\beta^{N_1}}{\sigma_0} \leq \gamma_2 L_2 \sqrt{D_{\Phi}(\vz^*,\vz_0)}$, we have ${\zeta_{k+1}}\leq \gamma_2 \kappa_2(\vz_0)\zeta_k^{\frac{3}{2}}$ by Theorem~\ref{thm:2nd_strongly_convex_local} for all $k \geq N_1$, which implies that $\gamma_2^2{\kappa}_2^2(\vz_0) \zeta_{k+1} \leq \left(\gamma_2^2{\kappa}_2^2(\vz_0) \zeta_k\right)^{\frac{3}{2}}$.
    By induction, we can prove that 
    $\gamma_2^2{\kappa}_2^2(\vz_0) \zeta_{k}\leq 2^{-(\frac{3}{2})^{k-N_1}}$ for all $k\geq N_1$. Hence, the additional number of iterations required to achieve $\zeta_k \leq \frac{(2-\alpha)\epsilon}{2{D_{\Phi}(\vz^*,\vz_0)}}$ does not exceed
    \begin{equation}\label{eq:iterations_after_local}
        \left\lceil \log_{{3}/{2}}\log_2 \left(\frac{2D_{\Phi}(\vz^*,\vz_0)}{(2-\alpha)\gamma^2_2 \kappa_2^2(\vz_0)\epsilon}\right) \right\rceil  \leq \log_{3/2}\log_{2} \left(\frac{2\mu^2}{(2-\alpha)\gamma^2_2 L^2_2\epsilon}\right)+1,
    \end{equation}
    where we used the definition of $\kappa_2(\vz_0)$ in \eqref{eq:condition_number}. 
    The result in \eqref{eq:2nd_complexity_after_local} now follows from \eqref{eq:bound_on_N1} and \eqref{eq:iterations_after_local}. 
\end{proof}

 Corollary~\ref{coro:2nd_complexity_bound} characterizes the overall iteration complexity of our proposed method. To summarize, 
if the required accuracy $\epsilon$ is larger than $\frac{\mu^2}{(2-\alpha)\gamma^2_2 L^2_2}$, then the iteration complexity is bounded by 
\begin{equation}\label{eq:global_complexity}
    \mathcal{O}\biggl( \biggl(\frac{L_2\sqrt{D_{\Phi}(\vz^*,\vz_0)}}{\mu}\biggr)^{\frac{2}{3}} +\log \left(\frac{{D_{\Phi}(\vz^*,\vz_0)}}{\epsilon}\right)\biggr).
\end{equation} 
Otherwise, for $\epsilon$ smaller than $\frac{\mu^2}{(2-\alpha)\gamma^2_2 L^2_2}$, the overall iteration complexity is bounded by 
\begin{equation}\label{eq:global_and_local_complexity}
    \mathcal{O}\biggl( \biggl(\frac{L_2\sqrt{D_{\Phi}(\vz^*,\vz_0)}}{\mu}\biggr)^{\frac{2}{3}}
   +\log\log \left(\frac{\mu^2}{L_2^2\epsilon}\right)
   \biggr).
\end{equation}
In \eqref{eq:global_and_local_complexity}, the first term captures the number of required iterations to reach the local neighborhood and the second term corresponds to the number of iterations in the local neighborhood to achieve accuracy~$\epsilon$.

\noindent\textbf{Comparison with  \cite{Ostroukhov2020}.}
We close the discussion on our iteration complexity by mentioning that similar complexity bounds to \eqref{eq:global_complexity} and \eqref{eq:global_and_local_complexity} were also established in \cite{Ostroukhov2020} but via a very different approach. Specifically, the authors of that paper proposed to apply a restarting technique on the extragradient-type method in \cite{Bullins2022higher}. Under Assumptions~\ref{assum:strongly_monotone} and~\ref{assum:second_order}, they proved a complexity bound of $\bigO((\frac{L_2R}{\mu})^{\frac{2}{3}}\log \frac{\mu R^2}{\epsilon})$ in terms of the primal-dual gap, where $R$ is an upper bound on $\|\vz_0-\vz^*\|$. Moreover, with the additional assumptions that $F$ is $L_1$-Lipschitz and $L_2$-smooth, they further combined it with the cubic-regularization-based method in \cite{Huang2022cubic} and improved the complexity bound to $\bigO((\frac{L_2R}{\mu})^{\frac{2}{3}}\log \frac{L_1L_2R}{\mu^2}+\log\log\frac{L_1^3}{\mu^2 \epsilon})$. 
Compared with ours, their method is restricted to the unconstrained problems in the Euclidean setup, and can only achieve local superlinear convergence under stronger assumptions.
Also, our method appears to be simpler as we do not need to switch between different update rules. Most importantly, they did not provide any detail on solving the subproblems nor identify the procedure and cost of finding an admissible step size, while we formally propose a line search scheme and identify the number of subsolver calls that our proposed second-order method requires. This is the topic of the next subsection.

\subsection{Complexity bound of the line search scheme}\label{subsec:complexity_LS}

So far, we have characterized the number of iterations required to solve the saddle point problem in \eqref{eq:minimax} using the second-order method in Algorithm~\ref{alg:2nd_optimistic} for both convex-concave and strongly-convex strongly-concave settings. The only missing piece of our analysis is characterizing the total number of calls to the optimistic subsolver during the whole process of our proposed method,  where each call solves an instance of \eqref{eq:2nd_order_subproblem}. 

\begin{theorem}\label{thm:steps_upper_2nd}
    Consider Algorithm~\ref{alg:2nd_optimistic} with parameters  $\alpha\in (0,1)$, $\beta\in (0,1)$,  and $\sigma_0>0$. For both convex-concave (Theorem~\ref{thm:OMD_2nd_ls_convex}) and strongly-convex-strongly-concave (Theorem~\ref{thm:2nd_strongly_convex_global}) settings, after $N$ iterations, the total number of calls to the optimistic subsolver does not exceed 
    $
    \max\left\{2N-1 + \log_{\frac{1}{\beta}}(\sigma_0\gamma_2 L_2 \sqrt{D_{\Phi}(\vz^*,\vz_0)}), N\right\}$,
    where $\gamma_2$  is a constant defined in \eqref{eq:def_gamma}.
\end{theorem}
\begin{proof}
By Proposition~\ref{prop:ls_total}, the number of calls to the optimistic subsolver at the $k$-th iteration is given by $\log_{\frac{1}{\beta}}\frac{\sigma_k}{\eta_k}+1$. Since $\sigma_k=\eta_{k-1}\sqrt{1+\eta_{k-1}\mu}/\beta$ for $k\geq 1$,  the total number of calls after $N$ iterations is
\begin{align}
    N_{\mathrm{total}} = \sum_{k=0}^{N-1} \left(\log_{\frac{1}{\beta}}\frac{\sigma_k}{\eta_k}+1\right) &= N + \log_{\frac{1}{\beta}}\frac{\sigma_0}{\eta_0} + \sum_{k=1}^{N-1} \log_{\frac{1}{\beta}}\frac{\eta_{k-1}\sqrt{1+\eta_{k-1}\mu}}{\beta\eta_k} \\
    &= 2N-1 + \log_{\frac{1}{\beta}}\frac{\sigma_0}{\eta_0} + \sum_{k=1}^{N-1} \log_{\frac{1}{\beta}}\frac{\eta_{k-1}\sqrt{1+\eta_{k-1}\mu}}{\eta_k} \\
    &= 2N -1 + \log_{\frac{1}{\beta}} \left(\frac{\sigma_0}{\eta_{N-1}}\prod_{k=0}^{N-2} \sqrt{1+\eta_{k}\mu} \right). \label{eq:line_search_bound} %
\end{align}
In addition, we have $\frac{1}{\eta_{N-1}^2} \prod_{k=0}^{N-2} (1+\eta_k\mu) \leq \max\left\{\gamma_2^2 L_2^2 D_{\Phi}(\vz^*,\vz_0), \frac{\beta^{2(N-1)}}{\sigma^2_0} \right\}$ by Lemma~\ref{lem:induction_bound}. Hence, together with \eqref{eq:line_search_bound}, we can upper bound $N_{\mathrm{total}}\leq 2N-1 + \max\{ \log_{\frac{1}{\beta}}(\sigma_0\gamma_2 L_2 \sqrt{D_{\Phi}(\vz^*,\vz_0)}), -(N-1)\}=\max\{2N-1 + \log_{\frac{1}{\beta}}(\sigma_0\gamma_2 L_2 \sqrt{D_{\Phi}(\vz^*,\vz_0)}), N\}$. This completes the proof. 
\end{proof}

This result shows that if we run our proposed second-order optimistic method in Algorithm~\ref{alg:2nd_optimistic} for $N$ iterations, the total number of calls to a subsolver of \eqref{eq:2nd_order_subproblem} is of the order $2N + \bigO(\log(\sigma_0 L_2 \sqrt{D_{\Phi}(\vz^*,\vz_0)}))$. Hence, the average number of calls per iteration can be bounded by a constant close to 2 for large~$N$. In comparison, the line search subroutines presented in \cite{Monteiro2012,Bullins2022higher} require a total number of subsolver calls of $\bigO(N \log\log(\frac{1}{\epsilon}))$ and $\bigO(N \log(\frac{1}{\epsilon}))$, respectively.

\section{Higher-order generalized optimistic method}\label{sec:high_order}
In this section, we explore the possibility of designing methods for saddle point problems using higher-order derivatives. Similar to the case in Section~\ref{sec:second_order}, \blue{for technical reasons we require that the distance-generating function $\Phi$ is $L_{\Phi}$-smooth w.r.t. $\|\cdot\|$ on $\mathcal{Z}$ and 1-strongly convex  on $\mathbb{R}^{m+n}$.} Also, we make the following assumption on $f$.  
\begin{assumption}\label{assum:high_order}
    The operator $F$ defined in \eqref{eq:def_of_F_H} is $p$-th-order $L_p$-smooth ($p \geq 3$) on $\mathcal{Z}$. Also, we have access to an oracle that returns $F(\vz),DF(\vz),\dots,DF^{(p-1)}(\vz)$ for any given $\vz$.   
\end{assumption}
Under Assumption~\ref{assum:high_order}, it follows from \eqref{eq:residual_F} that 
\begin{equation}\label{eq:higher_smooth}
    \|F(\vz)-T^{(p-1)}(\vz;\vz_k)\|_{*}\leq \frac{L_p}{p!}\|\vz-\vz_k\|^p, \quad \forall \vz \in \mathcal{Z},
\end{equation}
where $T^{(p-1)}(\vz;\vz_k)$ is the $(p-1)$-th order Taylor expansion of $F$ at point $\vz_k$ (cf. \eqref{eq:taylor_expansion}).  
A natural generalization of the second-order optimistic method in Section~\ref{sec:second_order} is to choose the approximation function as $P(\vz;\mathcal{I}_k):=T^{(p-1)}(\vz;\vz_k)$. Alas, a subtle technical issue arises on closer inspection of the resulting subproblem. Specifically, with this choice of $P(\vz;\mathcal{I}_k)$ we need to solve the inclusion problem: 
\begin{equation}\label{eq:high_order_subproblem_nonmonotone}
    0\in \eta_k T^{(p-1)}(\vz;\vz_k)+\hat{\eta}_k\left(F(\vz_k)-T^{(p-1)}(\vz_k;\vz_{k-1})\right)+\eta_k H(\vz) + \nabla \Phi(\vz)-\nabla \Phi(\vz_k).
\end{equation}
We observe that this is a non-monotone inclusion problem, since the operator $T^{(p-1)}(\vz;\vz_k)$, being the $(p-1)$-th Taylor approximation of a monotone operator $F$, is not monotone for $p\geq 3$ in general\footnote{For a counterexample, note that any multivariate polynomial of odd degree in $\mathbb{R}^d$ is not convex. Hence, when $p$ is odd, the $p$-th-order Taylor expansion of a convex function $f$ is not convex, which implies that the $(p-1)$-th-order Taylor expansion of the monotone gradient operator $\nabla f$ is not monotone.}. The lack of monotonicity not only makes solving the problem in \eqref{eq:high_order_subproblem_nonmonotone} potentially hard, but also impedes the analysis of our line search procedure. 

Inspired by the approach in~\cite{Nesterov2019}, we tackle this issue by using a regularized Taylor expansion in place of $T^{(p-1)}(\vz;\vz_k)$. Consider the operator $T^{(p-1)}_{\lambda}(\cdot;\vz):\mathbb{R}^d\rightarrow \mathbb{R}^d$ defined as
\begin{equation*}
    T^{(p-1)}_\lambda (\vz';\vz) := T^{(p-1)}(\vz';\vz)+ \frac{\lambda}{(p-1)!} (2D_{\Phi}(\vz',\vz))^{\frac{p-1}{2}}(\nabla \Phi(\vz')-\nabla \Phi(\vz)),
\end{equation*}
where $\lambda>0$ is the regularization parameter. 
Note that the regularization term is the gradient of the convex function $\frac{p\lambda}{(p+1)!}(2D_{\Phi}(\vz',\vz))^{\frac{p+1}{2}}$, and its norm is on the order of $\bigO(\|\vz'-\vz\|^p)$ since $\Phi$ is $L_{\Phi}$-smooth. 
Hence, for sufficiently large $\lambda$, we expect  $T^{(p-1)}_\lambda (\vz;\vz_k)$ to be a monotone operator of $\vz$, while maintaining a similar approximation error as in \eqref{eq:higher_smooth}. The following lemma confirms this intuition.  
\begin{lemma}\label{lem:property_high}
    \blue{Assume that $F$ is $p$-th-order $L_p$-smooth}. The following statements hold: 
    \begin{enumerate}[label=(\alph*)]
        \item \label{item:approx_high} For any $\lambda \geq 0$ we have \blue{$\| F(\vz')-T^{(p-1)}_{\lambda}(\vz';\vz)\|_* \leq \frac{L_p(\Phi,\lambda)}{p!}\|\vz'-\vz\|^p$ for any $\vz' \in \dom F$},
        where we denote
        \begin{equation}\label{eq:def_L_p}
            L_p(\Phi,\lambda) := L_p+p L_{\Phi}^{\frac{p+1}{2}} \lambda.
        \end{equation}
        \item \label{item:maximal_monotone_high} If $F$ is monotone, for $\lambda \geq L_p$, the operator $T^{(p-1)}_{\lambda}(\cdot;\vz)$ is maximal monotone with domain $\mathbb{R}^d$. 
    \end{enumerate}
\end{lemma}
\begin{proof}
    See Appendix~\ref{appen:property_high}. 
\end{proof}
Therefore, to instantiate the generalized optimistic method (Algorithm~\ref{alg:GOMD}) with a $p$-th-order oracle, we choose the approximation function as the regularized Taylor expansion $T_{\lambda}^{(p-1)}(\vz;\vz_k)$. Accordingly, the update rule for the proposed $p$-th-order optimistic method can be written as
\begin{equation}\label{eq:OMD_high_ls}
    0 \in   \eta_k T^{(p-1)}_{\lambda}(\vz_{k+1};\vz_k)
    +\hat{\eta}_k\left(F(\vz_k)-T^{(p-1)}_{\lambda}(\vz_k;\vz_{k-1})\right)+\eta_k H(\vz_{k+1})+\nabla \Phi(\vz_{k+1})-\nabla \Phi(\vz_k),
\end{equation}  
where the condition in \eqref{eq:stepsize_upper_bound} on $\eta_k$ can be written as 
\begin{equation}\label{eq:stepsize_higher}
    \eta_k \|F(\vz_{k+1})-T^{(p-1)}_{\lambda}(\vz_{k+1};\vz_{k}))\|_{*}\leq \frac{\alpha}{2}\|\vz_{k+1}-\vz_k\|.
\end{equation}

Apart from the regularization modification, the intuition and analysis for the $p$-th-order optimistic method are very similar to those for the second-order algorithm in Section~\ref{sec:second_order}. 
To see how the $p$-th-order information can lead to further speedup, note that the left hand side of \eqref{eq:stepsize_higher} is upper bounded by $\frac{\eta_kL_p(\Phi,\lambda)}{p!}\|\vz_{k+1}-\vz_k\|^p$ by Lemma~\ref{lem:property_high}(a). This suggests that we can pick our step size $\eta_k$ as 
\begin{equation}\label{eq:approx_step_high}
    \eta_k \approx \frac{ p! \alpha }{2  L_p(\Phi,\lambda)\|\vz_{k+1}-\vz_k\|^{p-1}}.
\end{equation}   
Compared with the step size in \eqref{eq:approx_step_2nd} for the second-order optimistic method, we can see that the step size in \eqref{eq:approx_step_high} will eventually be larger as the iterates approach the optimal solution and the displacement $\|\vz_{k+1}-\vz_k\|$ tends to zero. Hence, according to Proposition~\ref{prop:GOMD}, we shall expect better convergence rates for the $p$-th-order optimistic method. 
Also, to address the interdependence between the step size $\eta_k$ and the iterate $\vz_{k+1}$, we take the same approach as in Section~\ref{sec:second_order}: we use Algorithm~\ref{alg:ls} to select the step size in each iteration with a warm-start strategy for the choice of the initial step sizes $\{\sigma_k\}_{k \geq 1}$. Specifically, we set $\sigma_{k+1} = \eta_k/\beta$ in the convex-concave setting and set $\sigma_{k+1}=\eta_{k}(1+\eta_{k}\mu)^{\frac{p-1}{2}}/\beta$ in the strongly-convex-strongly-concave setting.  The resulting $p$-th-order optimistic method is formally described in Algorithm~\ref{alg:higher_optimistic}. 

Two remarks on the line search scheme in Algorithm~\ref{alg:higher_optimistic} follow. First, similar to the arguments in Section~\ref{sec:second_order}, we can guarantee that the line search scheme always terminates in finite steps by using the result in Lemma~\ref{lem:backtrack_terminate}. Hence, the step sizes $\{\eta_k\}_{k\geq 0}$ are well-defined. Second, at iteration $k$,  the optimistic subsolver in this case is required to solve a subproblem of $\vz$ in the form of 
\begin{equation}\label{eq:high_order_subproblem}
    0 \in \eta T^{(p-1)}_{\lambda}(\vz;\vz_{k})+\vv_k+\eta H(\vz) + \nabla \Phi(\vz)-\nabla \Phi(\vz_k)
\end{equation}
for given $\vv_k = \hat{\eta}_k(F(\vz_k)-T^{(p-1)}_{\lambda}(\vz_k;\vz_{k-1}))$ and $\eta>0$ (cf. Definition \ref{def:subsolver}). As shown in Lemma~\ref{lem:property_high}(b), the operator $T^{(p-1)}_{\lambda}(\vz;\vz_{k})$ is monotone in $\vz$ and hence the problem in \eqref{eq:high_order_subproblem} can be solved in general by many methods of monotone inclusion problems. %

\begin{algorithm}[!t]\small
    \caption{$p$-th-order optimistic method}\label{alg:higher_optimistic}
    \begin{algorithmic}[1]
        \STATE \textbf{Input:} initial point $\vz_0\in \mathcal{Z}$, initial trial step size $\sigma_0$, strong convexity parameter $\mu\geq 0$, regularization parameter $\lambda \geq L_p$, line search parameters $\alpha,\beta \in(0,1)$ 
        \STATE \textbf{Initialize:} set $\vz_{-1}\leftarrow \vz_0$ and $P(\vz_0;\mathcal{I}_{-1})\leftarrow F(\vz_0)$
        \FOR{iteration $k=0,\ldots,N-1$}
            \STATE Set $\hat{\eta}_k=\eta_{k-1}/(1+\eta_{k-1}\mu)$
            \STATE Select $\eta_{k}$ by Algorithm~\ref{alg:ls} with $\sigma=\sigma_k$, $\vz^-=\vz_k$, $P(\vz)=T^{(p-1)}_{\lambda}(\vz;\vz_k)$, and $\vv^{-}=\hat{\eta}_k(F(\vz_k)-T^{(p-1)}_{\lambda}(\vz_k;\vz_{k-1}))$
            \STATE Compute $\vz_{k+1}$ by solving  
             $
                0\in \eta_k T^{(p-1)}_{\lambda}(\vz;\vz_k)+\hat{\eta}_k\left(F(\vz_k)-T^{(p-1)}_{\lambda}(\vz_k;\vz_{k-1})\right)+\eta_k H(\vz) + \nabla \Phi(\vz)-\nabla \Phi(\vz_k)
            $
            \STATE Set $\sigma_{k+1} \leftarrow \eta_{k}(1+\eta_{k}\mu)^{\frac{p-1}{2}}/\beta$
        \ENDFOR
    \end{algorithmic}
\end{algorithm}

In the following sections, we derive convergence rates of the higher-order optimistic method and upper bound its total number of calls to the optimistic subsolver. Since the analysis largely mirrors the one for the second-order case in Section~\ref{sec:second_order}, we relegate the proofs to Appendix~\ref{appen:higher_order}.

\vspace{-1mm}
\subsection{Convergence analysis: convex-concave case}
\vspace{-1mm}

We first consider the case where $f$ is merely convex-concave. The ergodic convergence rate result below follows from Proposition~\ref{prop:GOMD}. 
\begin{theorem}\label{thm:OMD_high_ls_convex}
    Suppose Assumptions~\ref{assum:monotone} and~\ref{assum:high_order} hold and the distance-generating function $\Phi$ is $L_{\Phi}$-smooth.
    Let $\{\vz_k\}_{k\geq 0}$ be the iterates generated by Algorithm~\ref{alg:higher_optimistic}, where 
    $\hat{\eta}_k = \eta_{k-1}$, $\sigma_0>0$, $\lambda \geq L_p$, $\alpha\in (0,1)$, and $\beta\in (0,1)$. Then we have 
    $D_{\Phi}(\vz^*,\vz_k)\leq \frac{2}{2-\alpha} D_{\Phi}(\vz^*,\vz_0)$ for any $k\geq 0$.
    Moreover, for any $\vz = (\vx,\vy)\in \mathcal{X}\times \mathcal{Y}$,    
        \begin{equation*}
            \ell(\bar{\vx}_N,\vy)-\ell(\vx,\bar{\vy}_N) \leq 
            \left(1-\beta^{\frac{2}{p-1}}\right)^{-\frac{p-1}{2}} D_{\Phi}(\vz,\vz_0)\left(\sigma_0^{-\frac{2}{p-1}} +  \gamma_p^2 (L_p(\Phi,\lambda))^\frac{2}{p-1}D_{\Phi}(\vz^*,\vz_0) \right)^{\frac{p-1}{2}} N^{-\frac{p+1}{2}}, 
        \end{equation*}
         where  $\bar{\vz}_N=(\bar{\vx}_N,\bar{\vy}_N)$ is given by $\bar{\vz}_N=\frac{1}{\sum_{k=0}^{N-1} \eta_k}\sum_{k=0}^{N-1} \eta_k\vz_{k+1}$ and $\gamma_p$
        is a constant defined by 
        \begin{equation}\label{eq:def_gamma_p}
            \gamma_p = \sqrt{\frac{2}{1-\alpha}}L_{\Phi}^{3/2}\left(\frac{2}{p!\alpha\beta^p}\right)^{\frac{1}{p-1}}+\frac{1}{\sqrt{2(1-\alpha)}}\left(\frac{2}{p!\alpha\beta^p}\right)^{\frac{1}{p-1}}\alpha(\beta+L_{\Phi}^{1/2}).
        \end{equation}    
\end{theorem}
\begin{proof}
    See Appendix~\ref{appen:OMD_high_ls_convex}.
\end{proof}

Theorem~\ref{thm:OMD_high_ls_convex} shows that the $p$-th-order optimistic method converges at a rate of $\bigO(N^{-(p+1)/2})$ in terms of the primal-dual gap, faster than the rate of $\bigO(N^{-1})$ for first-order methods and the rate of $\bigO(N^{-3/2})$ for our second-order optimistic methods. As a corollary, to obtain a solution with a primal-dual gap of $\epsilon$, the proposed $p$-th-order optimistic method requires at most $\mathcal{O}(1/\epsilon^{2/(p+1)})$ iterations. 

\myalert{Comparison with \cite{Bullins2022higher}}. We note that similar iteration complexity bounds have also been reported in \cite{Bullins2022higher} for extragradient-type higher-order methods. However, the authors in \cite{Bullins2022higher} did not discuss how to solve the subproblem (which can be non-monotone) or how to select a step size that satisfies their specified conditions. In contrast, we propose and analyze our line search scheme in detail, and the total number of calls to the optimistic subsolver during the whole process after $N$ iterations can be also explicitly upper bounded, as we discuss later in Theorem~\ref{thm:steps_upper_high}.

\subsection{Convergence analysis: strongly-convex-strongly-concave case}
Next, we proceed to the setting where the smooth component $f$ of the objective in \eqref{eq:minimax} is $\mu$-strongly-convex-strongly-concave.
As discussed in Section~\ref{sec:second_order}, 
in this case the distance to the optimal solution ${D_{\Phi}(\vz^*,\vz_k)}$ is bounded above by the decreasing sequence $\{\zeta_k\}_{k \geq 0}$ defined in \eqref{eq:def_zeta}. Hence, it suffices to characterize the convergence behavior of the sequence $\zeta_k$.
To simplify the notation, we define 
\begin{equation}\label{eq:def_kappa_p_C_p}
    \tilde{\kappa}_p(\vz_0) := \frac{\gamma_p^{p-1} L_p(\Phi,\lambda)(D_{\Phi}(\vz^*,\vz_0))^{\frac{p-1}{2}}}{\mu} \quad \text{and} \quad C_p : = \left(1-\beta^{\frac{2}{p-1}}\right)^{-1} \biggl( \left(\frac{1}{\sigma_0 \mu}\right)^{\frac{2}{p-1}} +  (\tilde{\kappa}_p(\vz_0))^{\frac{2}{p-1}}\biggr). 
\end{equation}
where $L_p(\Phi,\lambda)$ and $\gamma_p$ are defined in \eqref{eq:def_L_p} and \eqref{eq:def_gamma_p}, respectively. \blue{The proofs for this section can be found in Appendix~\ref{appen:high-order-sc-sc}}. 
We {first} establish a global complexity bound for the proposed $p$-th-order method as in Theorem~\ref{thm:2nd_strongly_convex_global}.
\begin{theorem}[Global convergence]\label{thm:high_strongly_convex_global}
    Suppose that Assumptions~\ref{assum:strongly_monotone} and~\ref{assum:high_order} hold and the distance-generating function $\Phi$ is $L_{\Phi}$-smooth. Let $\{\vz_k\}_{k\geq 0}$ be the iterates generated by Algorithm~\ref{alg:higher_optimistic}, where $\hat{\eta}_k = {\eta_{k-1}}/{(1+\mu\eta_{k-1})}$, $\alpha\in (0,1)$, $\beta\in (0,1)$, $\sigma_0>0$ and $\lambda \geq L_p$. Further, recall the definition of $C_p$ in \eqref{eq:def_kappa_p_C_p}. 
    Then for any $\epsilon>0$, we have $\zeta_N \leq \frac{(2-\alpha)\epsilon}{2{D_{\Phi}(\vz^*,\vz_0)}}$ after at most the following number of iterations 
   \begin{equation}\label{eq:zeta_high_global_bound}
       N \leq \max\left\{\frac{1}{1-2^{-\frac{p-1}{p+1}}} C_p^{\frac{p-1}{p+1}}+ \log_2\left(\frac{2{D_{\Phi}(\vz^*,\vz_0)}}{(2-\alpha)\epsilon}\right)+1, 1\right\}.
   \end{equation}
 \end{theorem}

Theorem~\ref{thm:high_strongly_convex_global} guarantees the global convergence of our $p$-th-order optimistic method. Moreover, as in the case of our second-order optimistic method, the proposed higher-order optimistic method eventually achieves a fast local R-superlinear convergence rate, as stated in the following theorem. 

\begin{theorem}[Local convergence]\label{thm:high_strongly_convex_local}
    \blue{Under the same conditions as in Theorem~\ref{thm:high_strongly_convex_global},} for any $k \geq 0$, 
    we have \blue{${\zeta_{k+1}}   \leq \max\{\tilde{\kappa}_p(\vz_0), \frac{\beta^k}{\sigma_0\mu}\} {\zeta_k}^{\frac{p+1}{2}}$}, where $\tilde{\kappa}_p(\vz_0)$ is defined in \eqref{eq:def_kappa_p_C_p}. 
\end{theorem}

Theorem~\ref{thm:high_strongly_convex_local} implies that the sequence $\{\zeta_k\}_{k \geq 0}$ converges to 0 at a superlinear convergence rate of order $\frac{p+1}{2}$ once we have $\zeta_k < \min\left\{(\tilde{\kappa}_p(\vz_0))^{-\frac{2}{p-1}}, \left(\frac{\sigma_0 \mu}{\beta^k} \right)^{\frac{2}{p-1}} \right\}$. Due to the result in Theorem~\ref{thm:high_strongly_convex_global}, the sequence indeed eventually falls below this threshold after a finite number of iterations. Hence, in light of the bound in \eqref{eq:bound_with_zeta}, this in turn implies the local R-superlinear convergence of $D_{\Phi}(\vz^*,\vz_k)$.

By combining the global convergence result in Theorem~\ref{thm:high_strongly_convex_global} and the local convergence result in Theorem~\ref{thm:high_strongly_convex_local}, we characterize the overall computational complexity of the $p$-th-order optimistic method in Algorithm~\ref{alg:higher_optimistic}. 
As in Corollary~\ref{coro:2nd_complexity_bound} for the second-order method, the complexity result below is stated in two cases depending on the required accuracy $\epsilon$.  

\begin{corollary}\label{coro:high_complexity_bound}
    Suppose $\epsilon$ is the required accuracy that we aim for, i.e., $D_{\Phi}(\vz^*,\vz)\leq \epsilon$. {Under the same conditions as in Theorem~\ref{thm:high_strongly_convex_global}}, we have the following complexity bound for Algorithm~\ref{alg:higher_optimistic}.
        
        \begin{itemize}
            \item If $\epsilon\geq\frac{1}{(2-\alpha)\gamma^2_p}(\frac{\mu}{L_p(\Phi,\lambda)})^{\frac{2}{p-1}} $, {then =$D_{\Phi}(\vz^*,\vz_N) \leq \epsilon$ when the number of iterations $N$ exceeds}
            \begin{equation*}%
            \max\left\{\frac{1}{1-2^{-\frac{p-1}{p+1}}} C_p^{\frac{p-1}{p+1}}+ \log_2\left(\frac{2{D_{\Phi}(\vz^*,\vz_0)}}{(2-\alpha)\epsilon}\right)+1, 1\right\},
            \end{equation*}
            where $C_p$ is defined in \eqref{eq:def_kappa_p_C_p}. 
            \item If $\epsilon<\frac{1}{(2-\alpha)\gamma^2_p}(\frac{\mu}{L_p(\Phi,\lambda)})^{\frac{2}{p-1}}$, \blue{then $D_{\Phi}(\vz^*,\vz_N) \leq \epsilon$ when the number of iterations $N$ exceeds} 
            \begin{equation}\label{eq:high_complexity_after_local}
                \begin{aligned}
                    &\max\left\{\frac{1}{1-2^{-\frac{p-1}{p+1}}}C_p^{\frac{p-1}{p+1}}+ \frac{2}{p-1}\log_2 \tilde{\kappa}_p(\vz_0)+2, -\log_{\frac{1}{\beta}}(\sigma_0 \mu \tilde{\kappa}_p(\vz_0))+1,1\right\} \\
                    & + \log_{\frac{p+1}{2}}\log_2 \Biggl(\frac{2\mu^{\frac{2}{p-1}}}{(2-\alpha)\gamma^2_p (L_p(\Phi,\lambda))^{\frac{2}{p-1}}\epsilon}\Biggr)+1. 
                \end{aligned}
            \end{equation}
        \end{itemize}
    \end{corollary}

 Corollary~\ref{coro:high_complexity_bound} characterizes the overall iteration complexity of our proposed method. To simplify the discussions, assume that the regularization parameter is  $\lambda=\bigO(L_p)$ and the smoothness parameter $L_{\Phi}$ of $\Phi$ is bounded by some constant. As a result, $L_{p}(\Phi,\lambda)$ defined in \eqref{eq:def_L_p} is on the same order of $L_p$  and further $\tilde{\kappa}_p(\vz_0)$ in \eqref{eq:def_kappa_p_C_p} is on the same order of $\kappa_p(\vz_0)$ defined in \eqref{eq:condition_number}. If the required accuracy $\epsilon$ is larger than $\frac{1}{(2-\alpha)\gamma^2_p}(\frac{\mu}{L_p(\Phi,\lambda)})^{\frac{2}{p-1}}$, then the iteration complexity can be bounded by
\begin{equation}\label{eq:global_complexity_high}
    \mathcal{O}\biggl( \biggl(\frac{L_p (D_{\Phi}(\vz^*,\vz_0))^{\frac{p-1}{2}}}{\mu} \biggr)^{\frac{2}{p+1}}+\log \left(\frac{{D_{\Phi}(\vz^*,\vz_0)}}{\epsilon}\right)\biggr).
\end{equation}
Otherwise, for $\epsilon$ smaller than $\frac{1}{(2-\alpha)\gamma^2_p}(\frac{\mu}{L_p(\Phi,\lambda)})^{\frac{2}{p-1}}$, the overall iteration complexity is bounded by
\begin{equation}\label{eq:global_and_local_complexity_high}
\mathcal{O}\biggl( \biggl(\frac{L_p (D_{\Phi}(\vz^*,\vz_0))^{\frac{p-1}{2}}}{\mu} \biggr)^{\frac{2}{p+1}}
+
\log\log \biggl(\frac{\mu^\frac{2}{p-1}}{ L^{\frac{2}{p-1}}_p\epsilon}\biggr)
\biggr),
\end{equation}
The first term in \eqref{eq:global_and_local_complexity_high} captures the number of iterations required to reach the local neighborhood, while the second term corresponds to the number of iterations in the local neighborhood to achieve accuracy~$\epsilon$. It is worth noting that the work~\cite{Ostroukhov2020} also reported complexity bounds similar to \eqref{eq:global_complexity_high} and \eqref{eq:global_and_local_complexity_high}. The limitations of their results we discussed at the end of Section~\ref{subsec:2nd_sc_sc} also apply for the higher-order setting, and we refer the reader to the discussions therein.

\subsection{Complexity bound of the line search scheme}
As in Section~\ref{sec:second_order}, the final piece of our analysis is to characterize the complexity of our line search scheme in Algorithm~\ref{alg:higher_optimistic}. In the following theorem, we derive an upper bound on  the total number of calls to the optimistic subsolver required after $N$ iterations,  where each call solves an instance of \eqref{eq:high_order_subproblem}. 
This result holds for both convex-concave and strongly-convex strongly-concave settings. 
\begin{theorem}\label{thm:steps_upper_high}
    Consider Algorithm~\ref{alg:higher_optimistic} with parameters  $\alpha\in (0,1)$, $\beta\in (0,1)$, $\sigma_0>0$ and $\lambda \geq L_p$, and recall the definition of  $\gamma_p$ in \eqref{eq:def_gamma_p}. For both convex-concave (Theorem~\ref{thm:OMD_high_ls_convex}) and strongly-convex-strongly-concave (Theorem~\ref{thm:high_strongly_convex_global}) settings, after $N$ iterations, the total number of calls to the optimistic subsolver  does not exceed 
    $\max\left\{2N-1 + \log_{\frac{1}{\beta}}\left(\sigma_0\gamma_p^{p-1} L_p(\Phi,\lambda) (D_{\Phi}(\vz^*,\vz_0))^{\frac{p-1}{2}}\right), N \right\}$, where $\gamma_p$ is a constant defined in \eqref{eq:def_gamma_p}. 
\end{theorem}
\begin{proof}
    See Appendix~\ref{appen:steps_upper_high}. 
\end{proof}

This result shows that if we run our proposed $p$-th-order optimistic method in Algorithm~\ref{alg:higher_optimistic} for $N$ iterations to achieve a specific accuracy $\epsilon$,  the total number of calls to a subsolver of \eqref{eq:high_order_subproblem} is of the order $2N+\bigO(\log(\sigma_0 L_p (D_{\Phi}(\vz^*,\vz_0))^{\frac{p-1}{2}}))$. Hence, the average number of calls per iteration can be bounded by a constant close to 2 for large~$N$.
\section{Numerical experiments}\label{sec:numerical}

\subsection{First-order optimistic method}

\blue{
\myalert{Convex-concave setting.} 
Consider the following matrix game
\begin{equation}\label{eq:matrix_game}
    \min_{\vx \in \mathrm{\Delta}^m} \max_{\vy\in \mathrm{\Delta}^n} \langle \mA\vx, \vy\rangle,
\end{equation}
where $\mA\in \mathbb{R}^{n\times m}$ and $\mathrm{\Delta}^m$ and $\mathrm{\Delta}^n$ are the standard unit simplices in $\mathbb{R}^m$ and $\mathbb{R}^n$, respectively. As a common strategy for simplex constraints, we choose 
$\Phi_\mathcal{X}(\vx) = \sum_{i=1}^m x_i\log x_i$ and $\Phi_{\mathcal{Y}}(\vy) = \sum_{i=1}^n y_i\log y_i$ with $\|\cdot\|_{\mathcal{X}}=\|\cdot\|_1$ and $\|\cdot\|_{\mathcal{Y}}=\|\cdot\|_1$. It is easy to verify that Assumption~\ref{assum:first_order} holds: the operator $F$ defined in \eqref{eq:def_of_F_H} is $L_1$-Lipschitz with $L_1 = \max_{i,j} |A_{i,j}|$. Moreover, our first-order optimistic method in \eqref{eq:OMD_first} can be instantiated as 
    \begin{align}
        \hat{\vx}_{k+1} &= \vx_k\cdot \exp(-\eta_k \nabla_{\vx} f(\vx_k,\vy_k)-{\eta}_k(\nabla_{\vx} f(\vx_k,\vy_k)-\nabla_{\vx}f(\vx_{k-1},\vy_{k-1}))), \label{eq:mirror_descent_x}\\
        \hat{\vy}_{k+1} &= \vy_k\cdot \exp(\eta_k \nabla_{\vy} f(\vx_k,\vy_k)+{\eta}_k(\nabla_{\vy} f(\vx_k,\vy_k)-\nabla_{\vy}f(\vx_{k-1},\vy_{k-1}))), \label{eq:mirror_descent_y}\\
        x_{k+1,i} &= \frac{\hat{x}_{k+1,i}}{\sum_{l=1}^m \hat{x}_{k+1,l}} \;\forall i=1,\dots,m \qquad \text{and} \qquad
        y_{k+1,j} = \frac{\hat{y}_{k+1,j}}{\sum_{l=1}^n \hat{y}_{k+1,l}} \; \forall j=1,\dots,n. \nonumber
    \end{align}
where in \eqref{eq:mirror_descent_x} and \eqref{eq:mirror_descent_y} we perform elementwise multiplication. The step size $\eta_k$ is fixed as $1/M$ in our fixed step size scheme (Option I in Algorithm~\ref{alg:first_optimistic}), while it is chosen adaptively in our line search scheme (Option II in Algorithm~\ref{alg:first_optimistic}). We set the initial point $(\vx_0,\vy_0)$ as $x_{0,i} = 1/m$ for all $i=1,2,\dots,m$ and $y_{0,j} = 1/n$ for all $j=1,2,\dots,n$.
}

\blue{
Since the problem in \eqref{eq:matrix_game} is convex-concave, we measure the quality of a solution $(\vx,\vy)$ by 
the primal-dual gap function, which has a closed form of $\Delta(\vx,\vy) = \max_i (\mA \vx)_i - \min_{j} (\mA^\top \vy)_j$. By taking the supremum on both sides of \eqref{eq:gap_bound_1st}, we obtain the following upper bound for the fixed step size scheme via Theorem~\ref{thm:OMD_first}:
\begin{equation}\label{eq:primal_dual_gap_1st}
    \Delta(\bar{\vx}_N,\bar{\vy}_N) \leq \frac{M }{N}\cdot \Bigl(\max_{\vx\in \mathrm{\Delta}^m} D_{\Phi_{\mathcal{X}}}(\vx,\vx_0) + \max_{\vy\in \mathrm{\Delta}^n} D_{\Phi_{\mathcal{Y}}}(\vy,\vy_0) \Bigr) = \frac{M(\log m+\log n)}{N},
\end{equation}
where $\bar{\vx}_N = \frac{1}{N} \sum_{k=0}^{N-1} \vx_{k+1} $ and $\bar{\vy}_N = \frac{1}{N} \sum_{k=0}^{N-1} \vy_{k+1} $. A similar bound for the gap at the averaged iterates of the line search scheme can also be derived from Theorem~\ref{thm:OMD_first_ls}.}

\begin{figure}[!t]
    \vspace{-1em}
    \centering
    \subfloat[Results by iteration]{\includegraphics[width=0.45\linewidth, clip=true]{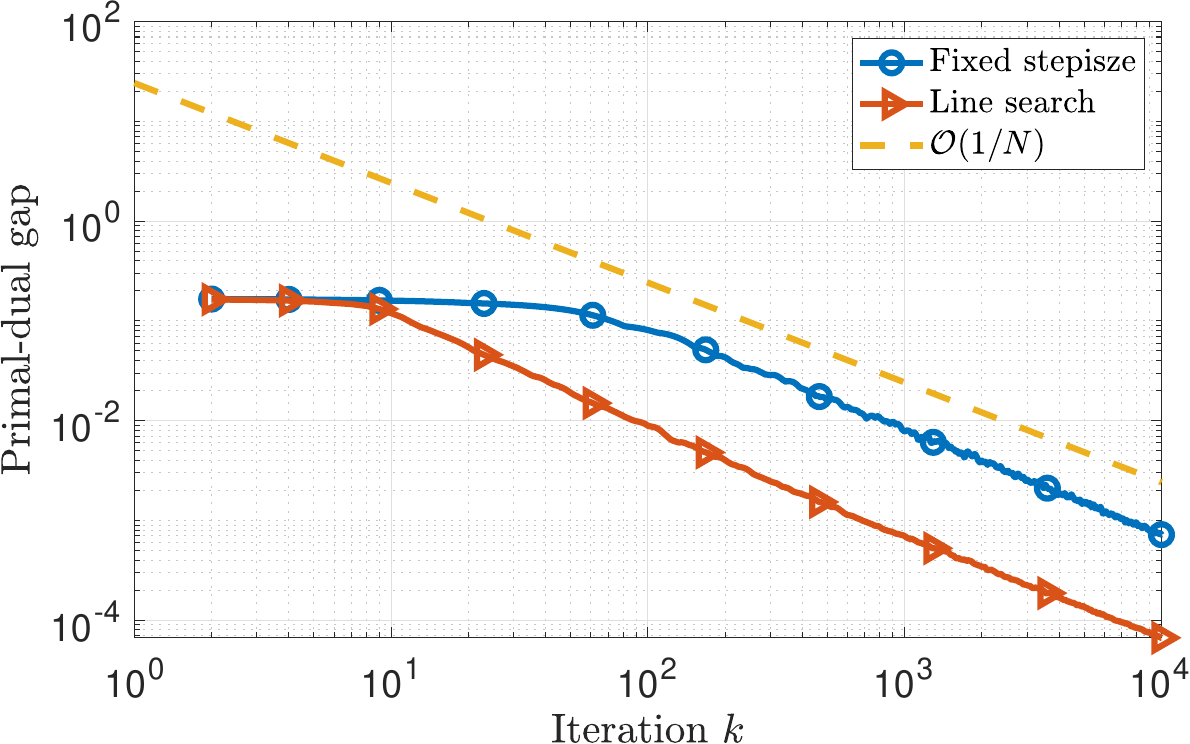}}
    \qquad
    \subfloat[Results by subsolver calls]{\includegraphics[width=0.45\linewidth, clip=true]{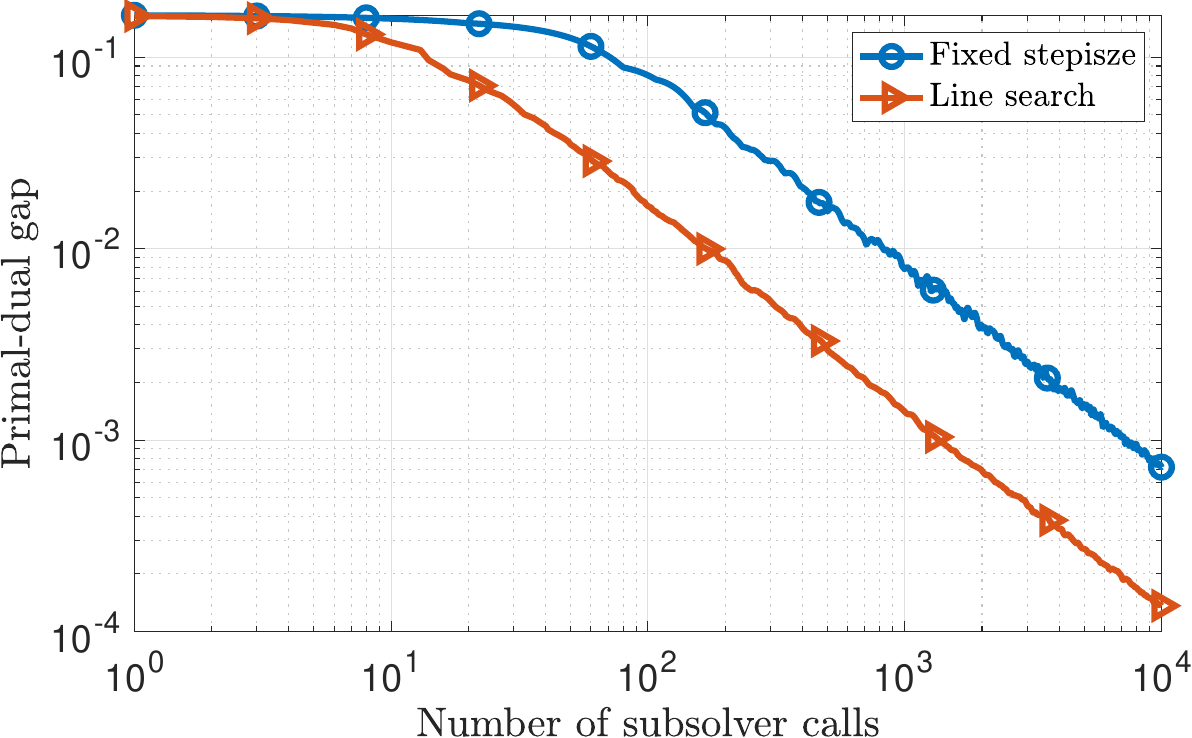}}
    \caption{The 
    {performance of}
    the first-order optimistic methods on solving the convex-concave saddle point problem in~\eqref{eq:matrix_game}. }\label{fig:1st_convex_concave}
\end{figure}

\blue{
In our experiment, we choose $m=600$, $n=300$, and generate the entries of $\mA$ independently and uniformly from the interval $[-1,1]$. For the fixed step size scheme, we choose $M=2L_1$, while for the adaptive line search scheme we choose $\alpha=1,\beta=0.8$, and $\sigma_0=1$.  
In Fig.~\ref{fig:1st_convex_concave}, we plot the primal-dual gap at the averaged iterate $(\bar{\vx}_N,\bar{\vy}_N)$ 
of both the fixed step size scheme and the line search scheme. The $\bigO(1/N)$ upper bound in \eqref{eq:primal_dual_gap_1st} is also shown for comparison. We can see that the averaged iterates in both schemes converge exactly at the rate of $\mathcal{O}(1/N)$, as predicted by our convergence analysis. Moreover, \blue{the line search scheme performs comparably to the fixed step size scheme},
and we note that it is able to achieve so without any prior knowledge of the Lipschitz constant of $F$.}

\myalert{Strongly-convex-strongly-concave setting.}
Consider the following composite saddle point problem with box constraints: 
\begin{equation}\label{eq:test_prob_1}
    \min_{\vx \in \mathcal{X}} \max_{\vy\in \mathcal{Y}}\;\;\; \langle\mA \vx-\vb, \vy\rangle+ \lambda \|\vx\|_1+\frac{\mu}{2}\|\vx\|_2^2-\lambda\|\vy\|_1-\frac{\mu}{2}\|\vy\|_2^2,
\end{equation}
where $\mA\in \mathbb{R}^{n\times m}$, $\vb\in \mathbb{R}^n$, $\mathcal{X} = \{\vx \in \mathbb{R}^m:\|\vx\|_{\infty}\leq R\}$ and $\mathcal{Y} = \{\vy\in \mathbb{R}^n: \|\vy\|_{\infty}\leq R\}$. This problem is an instance of Problem \eqref{eq:minimax} with $f(\vx,\vy)=\langle\mA \vx-\vb, \vy\rangle+\frac{\mu}{2}\|\vx\|_2^2-\frac{\mu}{2}\|\vy\|_2^2$, $h_1(\vx) = \lambda\|\vx\|_1$ and $h_2(\vy)=\lambda \|\vy\|_1$.  
 We consider the Euclidean case where $\|\cdot\|_{\mathcal{X}}=\|\cdot\|_2$, $\|\cdot\|_{\mathcal{Y}}=\|\cdot\|_2$ and $D_{\Phi}(\vz,\vz')=\frac{1}{2}\|\vz-\vz'\|^2_2$.
 In this case, the operator $F$ in \eqref{eq:def_of_F_H} is $L_1$-Lipschitz with $L_1$ being the operator norm of the matrix $    \begin{bsmallmatrix}
    \mu \mI_m & \mA^\mathsf{T} \\
    -\mA & \mu \mI_n
\end{bsmallmatrix}.$
Moreover, our first-order optimistic method in \eqref{eq:OMD_first} can be written as
\begin{equation*}%
    \begin{aligned}
        \vx_{k+1} &= T_{\eta_k\lambda,R}(\vx_{k}-\eta_k \nabla_{\vx} f(\vx_k,\vy_k)-\hat{\eta}_k(\nabla_{\vx} f(\vx_k,\vy_k)-\nabla_{\vx}f(\vx_{k-1},\vy_{k-1}))), \\
        \vy_{k+1} &= T_{\eta_k\lambda,R}(\vy_{k}+\eta_k \nabla_{\vy} f(\vx_k,\vy_k)+\hat{\eta}_k(\nabla_{\vy} f(\vx_k,\vy_k)-\nabla_{\vy}f(\vx_{k-1},\vy_{k-1}))),
    \end{aligned}
\end{equation*}
where the operator $T_{t,R}(\cdot):\mathbb{R} \rightarrow \mathbb{R}$ is defined by
\begin{equation*}
    T_{t,R}(z) = 
    \begin{cases}
        0, & \text{if }|z| \leq t; \\
        (|z|-t)\cdot \mathrm{sgn}(z) & \text{if } t<|z| \leq t+R; \\
        R \cdot \mathrm{sgn}(z) & \text{Otherwise},
    \end{cases}
\end{equation*}
and it is applied elementwise. The step size $\eta_k$ is fixed as $1/M$ in our fixed step size scheme (Option I in Algorithm~\ref{alg:first_optimistic}), while it is chosen adaptively in our line search scheme (Option II in Algorithm~\ref{alg:first_optimistic}). The initial point $(\vx_0,\vy_0)$ is chosen as the origin in $\mathbb{R}^{m+n}$.

When $\mu$ is positive, $f(\vx,\vy)$ is $\mu$-strongly-convex-strongly-concave. In this case, the problem in \eqref{eq:test_prob_1} has a unique saddle point $(\vx^*,\vy^*)$ and we measure the quality of a solution $(\vx,\vy)$ by its distance to $(\vx^*,\vy^*)$. Theorem~\ref{thm:OMD_first} provides the following linear convergence rate for the fixed step size scheme:  
\begin{equation}\label{eq:dist_to_opt_1st}
   \|\vz_N-\vz^*\|_2^2 \leq \|\vz_0-\vz^*\|_2^2 \left(\frac{M}{(1+s(\Phi))\mu+M}\right)^{N}=\|\vz_0-\vz^*\|_2^2 \left(\frac{M}{2\mu+M}\right)^{N},
\end{equation} 
where we also take the symmetry coefficient of $\Phi$ into account (cf. Remark~\ref{rem:symmetry}). A similar bound can also be derived for the line search scheme from Theorem~\ref{thm:OMD_first_ls}. Since the saddle point of the problem in \eqref{eq:test_prob_1} does not admit a closed form, in the experiment we approximate it by running the first-order optimistic method for a very long time (more than $10^5$ iterations).

\begin{figure}[!t]
    \centering
    \subfloat[Results by iteration]{\includegraphics[width=0.45\linewidth, clip=true]{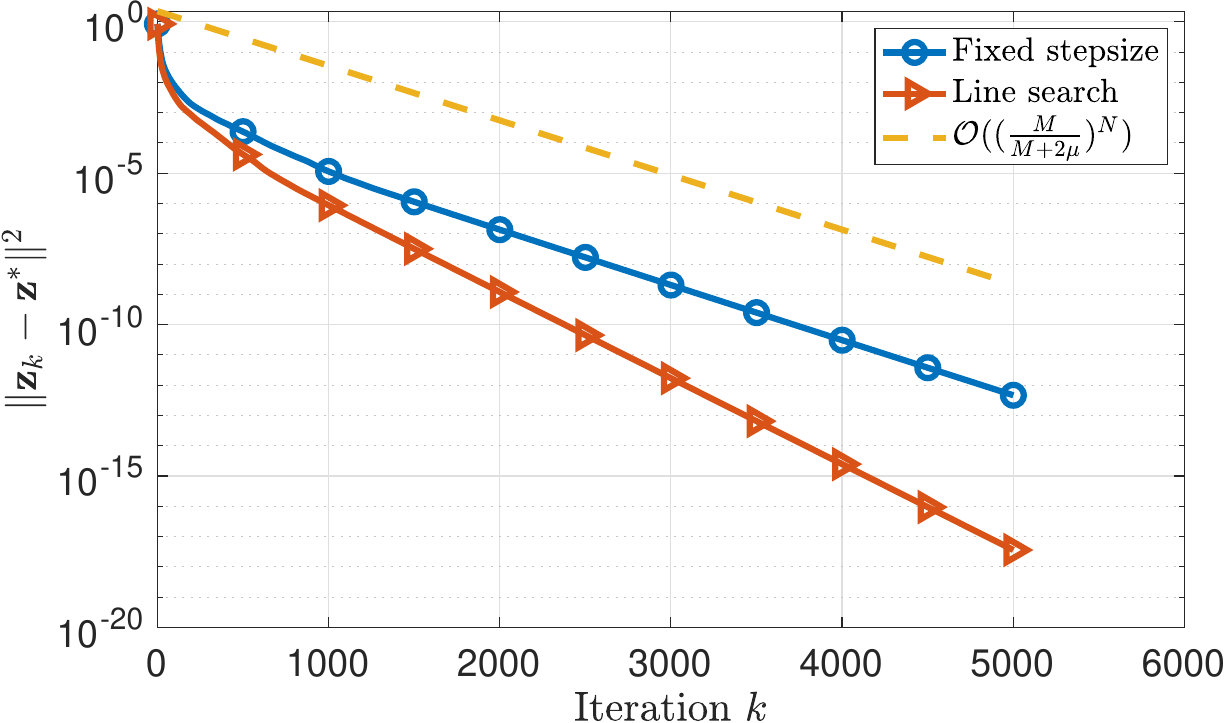}}
    \qquad 
    \subfloat[Results by subsolver calls]{\includegraphics[width=0.45\linewidth, clip=true]{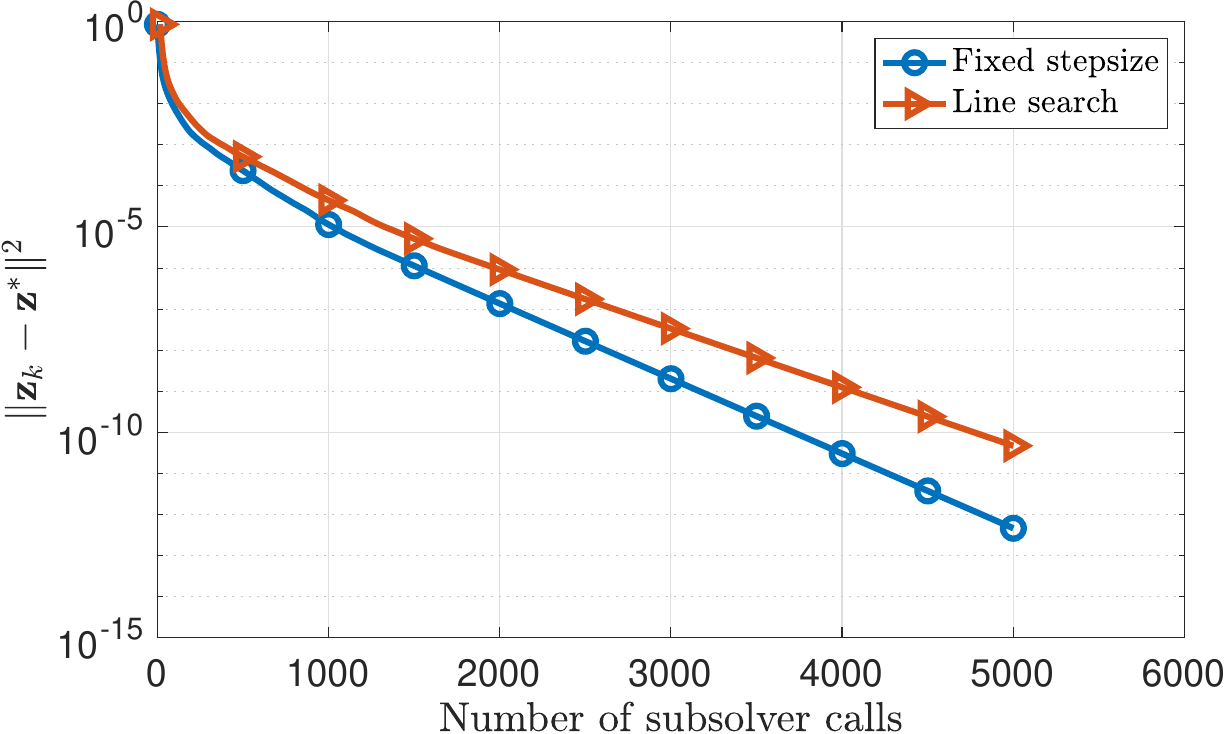}}
    \caption{The {performance of} the first-order optimistic methods on solving a strongly-convex-strongly-concave saddle point problem in \eqref{eq:test_prob_1}.}\label{fig:1st_strong}
\end{figure}

In our experiment, the problem parameters are chosen as $m=600$, $n=300$, $\lambda=0.1$, $\mu=0.1$, and $R=0.05$. For the fixed step size scheme, we choose $M=2L_1$, while for the line search scheme we choose $\alpha=1,\beta=0.8$, and $\sigma_0=1$. We generate the entries of $\mA$ and $\vb$ independently and uniformly from the interval $[-1,1]$.
In Fig.~\ref{fig:1st_strong}, we plot the distance to the saddle point $\vz^* = (\vx^*,\vy^*)$ of the fixed step size scheme and the line search scheme versus the number of iterations, along with the convergence bound in \eqref{eq:dist_to_opt_1st}. 
We observe that both schemes exhibit linear convergence, and the rate of the fixed step size scheme agrees well with our theory.
Moreover, as in the convex-concave setting, we can see that the line search scheme \blue{achieves a similar convergence rate as the fixed step size scheme}.

\myalert{The complexity of line search.} We also empirically evaluate the number of subsolver calls made in our line search scheme. To see the impacts of the line search parameters,  
we vary $\sigma_0$ and $\beta$ and run our method on both the convex-concave and strongly-convex-strongly-concave saddle point problems in \eqref{eq:matrix_game} and \eqref{eq:test_prob_1}. For each configuration, we generate 50 random instances and run our method until it finds a solution with accuracy $10^{-9}$ or completes 1,000 iterations. Then we compute the average number of subsolver calls per iteration in each run and report the maximum among the 50 runs in Table~\ref{tab:complexity_1}. From the table, we can see that the line search complexity is insensitive to the choice of the line search parameters. In fact, in all test instances our method requires nearly two calls to the subsolver per iteration on average, which verifies that our line search scheme is highly practical. 
\begin{table}[tbhp]
    {\footnotesize
    \caption{The maximum average number of subsolver calls per iteration in the first-order optimistic method among 50 random instances.}\label{tab:complexity_1}
    \vspace{-3mm}
    \begin{center}  
    \begin{tabular}{|c|c|c|}
        \hline
        \multirow{2}{*}{Parameters} & \multicolumn{2}{c|}{Max average number of subsolver calls per iteration} \\ \cline{2-3} 
                                                & Convex-concave setting & Strongly-convex-strongly-concave setting \\ \hline
        $\sigma_0=1,\beta = 0.5$                & 1.998           & 2.004 \\ \hline
        $\sigma_0=100, \beta =0.5$              & 2.004           & 2.011 \\ \hline
        $\sigma_0=10,000, \beta = 0.5$          & 2.011           & 2.018 \\ \hline
        $\sigma_0=1,\beta = 0.9$                & 1.986           & 2.033 \\ \hline
        $\sigma_0=100, \beta =0.9$              & 2.031           & 2.076 \\ \hline
        $\sigma_0=10,000, \beta = 0.9$          & 2.075           & 2.120 \\ \hline
        \end{tabular}  
    \end{center}
    }
    \vspace{-6mm}
    \end{table}
\subsection{Second-order optimistic method}
To test our second-order optimistic method, we consider unconstrained saddle point problems:

\begin{equation}\label{eq:test_prob_2}
    \min_{\vx\in \mathbb{R}^n} \max_{\vy\in \mathbb{R}^n}\;\;\; 
    f(\vx,\vy) = \frac{L_2}{6}\|\vx\|^3 + \langle \mA\vx-\vb,\vy \rangle + \frac{\mu}{2}\|\vx\|^2 - \frac{\mu}{2}\|\vy\|^2,
\end{equation}
where $L_2>0$, $\mu \geq 0$,  and the matrix $\mA\in \mathbb{R}^{n\times n}$ is chosen as $\mA = 
\begin{bsmallmatrix}
    1 & -1 & & & \\
      & 1  & -1 & & \\
      &    & \ddots & \ddots & \\
      & &  & 1 & -1 \\
      & & & & 1
\end{bsmallmatrix}$. Moreover, we generate the entries of the vector $\vb\in \mathbb{R}^n$ independently and randomly from the interval $[-1,1]$ and then normalize the vector such that $\|\vb\| = 1$. 
We can verify that Problem \eqref{eq:test_prob_2} is convex-concave when $\mu=0$ and strongly-convex-strongly-concave when $\mu >0$. Moreover, $F$ defined in \eqref{eq:def_of_F_H} is $L_2$-second-order smooth.
The subproblem in \eqref{eq:2nd_order_subproblem} for the optimistic subsolver is equivalent to solving a system of linear equations. Specifically, the update rule in \eqref{eq:OMD_2nd_ls} can be instantiated as
\begin{equation}\label{eq:omd_test_2}
    \vz_{k+1} = \vz_k - ({\mI}+\eta_k DF(\vz_k) )^{-1}(\eta_k F(\vz_k)+\vv_k),
\end{equation}
where $\vv_k = \hat{\eta}_k(F(\vz_k)-F(\vz_{k-1})-DF(\vz_{k-1})(\vz_k-\vz_{k-1}))$ and $\eta_k$ is determined by the adaptive line search scheme (see Algorithm~\ref{alg:2nd_optimistic}). 
In our experiments, we set the initial point as $(\vx_0,\vy_0)=(\mathbf{0},\mathbf{0})$.

\myalert{Convex-concave setting.}
We consider Problem~\eqref{eq:test_prob_2} with $\mu=0$. 
It can be verified that it has a unique saddle point $\vz^*=(\vx^*,\vy^*)$ given by $\vx^* = \mA^{-1}\vb$ and $\vy^* =  -\frac{L_2}{2}\|\vx^*\|(\mA^\mathsf{T})^{-1}\vx^*$. 
Since the feasible set is unbounded, we use the restricted primal-dual gap defined in \eqref{eq:restricted_gap_function} as the performance metric.  With $\mathcal{B}_1 = \mathbb{R}^n$ and $\mathcal{B}_2 = \{\vy:\|\vy\| \leq R\}$, the gap can be computed by
\begin{equation*}
    \Delta_{\mathcal{B}_1\times \mathcal{B}_2}(\vx,\vy) = \frac{L_2}{6}\|\vx\|^3+R\|\mA \vx -\vb\| + \frac{2}{3}\sqrt{\frac{2}{L_2}}\|\mA^\mathsf{T}\vy\|^{\frac{3}{2}}+\langle \vb, \vy \rangle.
\end{equation*}
By taking the supremum over $(\vx,\vy)\in \mathcal{B}_1\times \mathcal{B}_2$ on both sides of \eqref{eq:duality_gap}, we obtain $\vx=-\sqrt{\frac{2}{L_2}}\frac{\mA^\mathsf{T}\bar{\vy}_N}{\sqrt{\|\mA^\mathsf{T}\bar{\vy}_N\|}}$ and $\vy = \frac{R(\mA\bar{\vx}_N-\vb)}{\|\mA\bar{\vx}_N-\vb\|}$, leading to the following upper bound 
\begin{equation}\label{eq:eta_bound_2nd}
    \Delta_{\mathcal{B}_1\times \mathcal{B}_2}(\bar{\vx}_N,\bar{\vy}_N) \leq \frac{1}{2} \left(\frac{2}{L_2}\|\mA^\mathsf{T}\bar{\vy}_N\|+R^2\right)\left(\sum_{k=0}^{N-1} \eta_k\right)^{-1},
\end{equation}
where $\bar{\vx}_N=\frac{1}{\sum_{k=0}^{N-1} \eta_k}\sum_{k=0}^{N-1} \eta_k\vx_{k+1}$ and $\bar{\vy}_N=\frac{1}{\sum_{k=0}^{N-1} \eta_k}\sum_{k=0}^{N-1} \eta_k\vy_{k+1}$. 

\begin{figure}[t!]
    \centering
    \subfloat[Convex-concave setting]{\includegraphics[width=0.45\linewidth, clip=true]{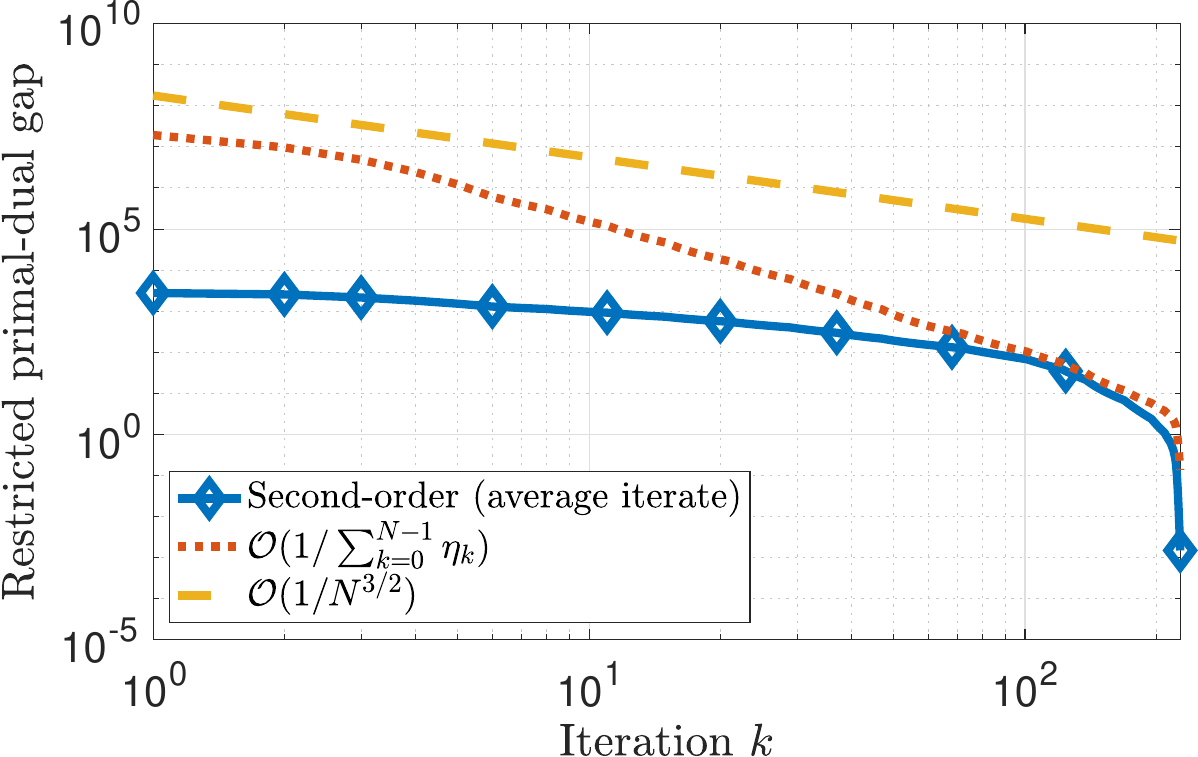}\label{fig:2nd_convex_concave}}\qquad
    \subfloat[Strongly-convex-strongly-concave setting]{\includegraphics[width=0.45\linewidth, clip=true]%
    {2nd_strongly_convex_strongly_concave_simple}\label{fig:2nd_strong}}
    
    \caption{The {performance of}
    the second-order optimistic method on solving the saddle point problems in \eqref{eq:test_prob_2} with $\mu = 0$ and $\mu = 10^{-3}$, respectively.
    }%
\end{figure}

In our experiment, the problem parameters are chosen as $n=200$ and $L_2=10$, and the line search parameters are $\alpha=0.5$, $\beta=0.5$, $\sigma_0 = 1$, and $\epsilon = 10^{-10}$.  In Fig.~\ref{fig:2nd_convex_concave}, we plot the restricted primal-dual gap at the averaged iterate $(\bar{\vx}_N,\bar{\vy}_N)$ and the convergence bounds in \eqref{eq:eta_bound_2nd} for comparison.  
We observe that the second-order method exhibits a sublinear convergence rate in the early stage, which is comparable to our theoretical rate of $\bigO(N^{-\frac{3}{2}})$. 
Moreover, the bound in \eqref{eq:eta_bound_2nd} provides a tight upper bound for the second-order method especially in the later stage. 

\myalert{Strongly-convex-strongly-concave setting.}
We consider Problem~\eqref{eq:test_prob_2} with $\mu >0$.  
In this case, it has a unique saddle point $\vz^*=(\vx^*,\vy^*)$, which we compute numerically using MATLAB's built-in nonlinear equation solver. 
By Proposition~\ref{prop:GOMD}, {for the second-order optimistic method in Algorithm~\ref{alg:2nd_optimistic}}, we have 
\begin{equation}\label{eq:dist_to_opt_zeta}
    \|\vz_k-\vz^*\|^2 \leq \frac{2\|\vz_0-\vz^*\|^2}{2-\alpha}\ \zeta_k,
\end{equation}
where $\zeta_k$ is given in \eqref{eq:def_zeta}. To compare the empirical results with our theoretical findings, we consider a ``simulated'' sequence $\{\tilde{\zeta}_k\}_{k\geq 0}$ with $\tilde{\zeta}_0=1$ following the dynamic 
\vspace{-2mm}
\begin{equation}\label{eq:simulated_zeta}
\vspace{-2mm}
    \frac{1}{\tilde{\zeta}_k} = 1+ \frac{1}{C} \left(\sum_{l=0}^{k-1} \frac{1}{\tilde{\zeta}_l}\right)^{\frac{3}{2}}
\end{equation}
for any $k\geq 1$ for some $C>0$. In light of Lemma~\ref{lem:multiplying_factor}, we can see that $\zeta_k \leq \tilde{\zeta}_k$ for all $k\geq 0$ if the parameter $C$ is chosen as $\gamma_2 \kappa_2(\vz_0)$.

In our experiment, {the problem parameters are chosen as $n=200$, $L_2=10,000$, $\mu=10^{-3}$, and the line search parameters are the same as in the convex-concave setting.} In Fig.~\ref{fig:2nd_strong} we plot the distance to the saddle point $\vz^*$ of the second-order optimistic methods versus the number of iterations. For comparison, 
we also plot 
the bound in \eqref{eq:dist_to_opt_zeta} using the actual sequence $\{\zeta_k\}_{k\geq 0}$ as well as the simulated sequence $\{\tilde{\zeta}_k\}_{k\geq 0}$. 
In particular, our second-order method exhibits local superlinear convergence in the neighborhood of the saddle point, doubling the accuracy of the solution within a few iterations. 
Also, the bound in \eqref{eq:dist_to_opt_zeta} using $\{\zeta_k\}_{k\geq 0}$ provides a tight upper bound for the second-order method. Moreover, if the parameter $C$ in \eqref{eq:simulated_zeta} is carefully chosen, we observe that the bound in \eqref{eq:dist_to_opt_zeta} using $\{\tilde{\zeta}_k\}_{k\geq 0}$ has a qualitatively 
similar convergence behavior as the empirical result. 

\myalert{The complexity of line search.} We also empirically evaluate the number of subsolver calls made in our second-order optimistic method. 
we vary the line search parameters $\sigma_0$ and $\beta$ and run our method on Problem~\eqref{eq:test_prob_2} %
until it finds a solution with accuracy $10^{-10}$ or completes 500 iterations. The maximum is reported in Table~\ref{tab:complexity_2}. We can see that the line search complexity is insensitive to the choice of the initial trial step size $\sigma_0$, while a larger $\beta$ could lead to more calls to the subsolver in the strongly-convex-strongly-concave setting. Still, in our test instances, the average number of subsolver calls per iteration can be controlled around 2 when $\beta$ is chosen as 0.5, which is a reasonable price to pay given its superior convergence performance.

\begin{table}[tbhp]
    {\footnotesize
    \caption{The maximum average number of subsolver calls per iteration in the second-order optimistic method among 50 random instances.}\label{tab:complexity_2}
    \begin{center}    
    \begin{tabular}{|c|c|c|}
      \hline
      \multirow{2}{*}{ Parameters}       & \multicolumn{2}{c|}{Max average number of subsolver calls per iteration}                           \\ \cline{2-3} 
                                                    & {Convex-concave setting} & {Strongly-convex-strongly-concave setting} \\ \hline
      $\sigma_0=1,\beta = 0.5$                               & 1.9780                 & 2.0174         \\ \hline
      $\sigma_0=10, \beta =0.5$                            & 1.9860                  & 2.0492         \\ \hline
      $\sigma_0=100, \beta = 0.5$                         & 1.9920                  & 2.0964         \\ \hline
      $\sigma_0=1,\beta = 0.9$                               & 1.8580                & 2.1504         \\ \hline
      $\sigma_0=10, \beta =0.9$                             & 1.9020                 & 2.1681         \\ \hline
      $\sigma_0=100, \beta = 0.9$                        & 1.9440                 & 2.4609         \\ \hline
      \end{tabular}
    \end{center}
    }
    \vspace{-2em}
\end{table}

\section{Conclusions}\label{sec:conclusions}
In this paper, we proposed the generalized optimistic method for composite convex-concave saddle point problems by approximating the Bregman proximal point method. We also designed a novel line search scheme to select the step size in our method adaptively and without knowledge of the smoothness coefficients of the objective. Under our unified framework, our first-order optimistic method provably achieves a complexity bound of $\bigO(1/\epsilon)$ in terms of the restricted primal-dual gap in the convex-concave setting, and a complexity bound of $\bigO(\kappa_1 \log \frac{1}{\epsilon})$ in terms of the Bregman distance to the saddle point in the strongly-convex-strongly-concave setting. Furthermore, \blue{under the assumption that the distance-generating function has Lipschitz gradient,} our $p$-th-order optimistic method ($p\geq 2$) provably achieves a complexity bound of $\bigO(1/\epsilon^{\frac{2}{p+1}})$ in the convex-concave setting, and a complexity bound of $\bigO((\kappa_p(\vz_0))^{\frac{2}{p+1}}+\log\log\frac{1}{\epsilon})$ in the strongly-convex-strongly-concave setting. We also proved that our line search scheme only requires a constant number of calls to the optimistic subsolvers per iteration on average, which is supported by our numerical experiments.

\section*{Acknowledgements}
This research of R. Jiang and A. Mokhtari is supported in part by NSF Grants 2007668, 2019844, and  2112471,  ARO  Grant  W911NF2110226,  the  Machine  Learning  Lab  (MLL)  at  UT  Austin, and the Wireless Networking and Communications Group (WNCG) Industrial Affiliates Program.
\printbibliography

\appendix

\section{Proofs for preliminary lemmas}\label{appen:function_operator}

\begin{proof}[Proof of Lemma~{\ref{lem:function_operator}}]

We prove this lemma in a united way by regarding Assumption~\ref{assum:monotone} as a special case of Assumption~\ref{assum:strongly_monotone} with $\mu=0$. 
    Let $\vz=(\vx,\vy)$ and $\vz'=(\vx',\vy')$ be any two points in $\mathcal{X}\times \mathcal{Y}$. Since $f$ is $\mu$-strongly convex in $\vx$ w.r.t. $\Phi_{\mathcal{X}}$, we have $f(\vx',\vy) \geq f(\vx,\vy)+\langle \nabla_{\vx} f(\vx,\vy), \vx'-\vx\rangle + \mu D_{\Phi_{\mathcal{X}}}(\vx',\vx)$ and $f(\vx,\vy') \geq f(\vx',\vy')+\langle \nabla_{\vx} f(\vx',\vy'), \vx-\vx' \rangle + \mu D_{\Phi_{\mathcal{X}}}(\vx,\vx')$.
    Similarly, since $f$ is $\mu$-strongly concave in $\vy$ w.r.t.~$\Phi_{\mathcal{\mathcal{Y}}}$, we have $f(\vx,\vy) + \langle \nabla_{\vy} f(\vx,\vy), \vy'-\vy\rangle - \mu D_{\Phi_{\mathcal{Y}}}(\vy',\vy) \geq f(\vx,\vy')$ and $f(\vx',\vy')+\langle \nabla_{\vy} f(\vx',\vy'), \vy-\vy' \rangle - \mu D_{\Phi_{\mathcal{Y}}}(\vy,\vy') \geq f(\vx',\vy)$.   
    By summing all the inequalities above, we get $\langle F(\vz)-F(\vz'), \vz-\vz' \rangle \geq \mu(D_{\Phi_{\mathcal{Z}}}(\vz',\vz)+D_{\Phi_{\mathcal{Z}}}(\vz,\vz'))$,
    where we used the definition of $F$ in \eqref{eq:def_of_F_H} and the equality in \eqref{eq:mirror_map}. This proves that $F$ is $\mu$-strongly monotone w.r.t. $\Phi_{\mathcal{Z}}$ on $\mathcal{Z}$. 
\end{proof}

\begin{proof}[Proof of Lemma~{\ref{lem:duality_gap}}]%

 For any $\vz_k = (\vx_k,\vy_k)$, $\vz = (\vx,\vy)\in\mathcal{X}\times \mathcal{Y}$, by the definitions of $\ell$ and $h$ we have 
    \begin{align}
        \ell(\vx_k,\vy)-\ell(\vx,\vy_k) &= f(\vx_k,\vy)-f(\vx,\vy_k)+h_1(\vx_k)+h_2(\vy_k)-h_1(\vx)-h_2(\vy) \nonumber \\
        &= f(\vx_k,\vy)-f(\vx,\vy_k)+h(\vz_k)-h(\vz). \label{eq:gap}
    \end{align}
    Since $f$ is convex in $\vx$ and concave in $\vy$, by Assumption~\ref{assum:monotone} we have $f(\vx_k,\vy_k) - f(\vx,\vy_k) \leq \langle \nabla f_{\vx}(\vx_k,\vy_k),\vx_k-\vx\rangle$ and $f(\vx_k,\vy) - f(\vx_k,\vy_k) \leq \langle -\nabla f_{\vy}(\vx_k,\vy_k),\vy_k-\vy\rangle$. 
    Adding these two inequalities gives us 
    \begin{equation}\label{eq:linearized}
        f(\vx_k,\vy) - f(\vx,\vy_k) \leq \langle F(\vz_k), \vz_k-\vz \rangle,
    \end{equation}
    where we used the definition of $F$ in \eqref{eq:def_of_F_H}. Combining \eqref{eq:gap} and \eqref{eq:linearized} leads to 
    \begin{equation*}
        \sum_{k=1}^N \theta_k \ell(\vx_k,\vy) - \sum_{k=1}^N \theta_k \ell(\vx,\vy_k) \leq 
        \sum_{k=1}^N \theta_k
        ( \langle F(\vz_k), \vz_k-\vz \rangle
        + h(\vz_k)-h(\vz)).
    \end{equation*}
    Finally, since $\ell$ is convex in $\vx$ and concave in $\vy$,  we have {$\sum_{k=1}^N \theta_k \ell(\vx_k,\vy) \geq \ell(\bar{\vx}_N,\vy)$ and $\sum_{k=1}^N \theta_k \ell(\vx,\vy_k) \leq \ell(\vx,\bar{\vy}_N)$ by Jensen's inequality,}
    from which the result follows.
\end{proof}

\begin{proof}[Proof of Lemma~\ref{lem:saddle_point}]\label{sec:proof_of_lem:saddle_point}
    
    Since $\vz^*$ is a saddle point of Problem~\eqref{eq:minimax}, it also solves the variational inequality in \eqref{eq:VI}. Hence, for any $\vz\in \mathcal{Z}$, we have $\langle F(\vz^*), \vz-\vz^* \rangle +h(\vz)-h(\vz^*) \geq 0$. 
    Moreover, since $F$ is $\mu$-strongly monotone w.r.t.\@ $\Phi$, we have $\langle F(\vz)-F(\vz^*), \vz-\vz^* \rangle \geq \mu D_{\Phi}(\vz,\vz^*) +\mu D_{\Phi}(\vz^*,\vz) \geq \mu D_{\Phi}(\vz^*,\vz)$. 
    Adding these two inequalities gives us the desired result. 
\end{proof}

\section{Proof of \texorpdfstring{Proposition~\ref{prop:BPP}}{Proposition 3.1}}
\label{sec:proof_of_BPP}
The update rule for the Bregman PPM in \eqref{eq:BPP_inclusion} implies that 
$
    \eta_k F(\vz_{k+1})+\nabla \Phi(\vz_{k+1})-\nabla \Phi(\vz_k)
    \in -\eta_k H(\vz_{k+1})
$. 
Hence, by using the definitions of $H$ in \eqref{eq:def_of_F_H},  we have 
$\eta_k \langle F(\vz_{k+1}),\vz_{k+1}-\vz\rangle 
+\langle \nabla \Phi(\vz_{k+1})-\nabla \Phi(\vz_k), \vz_{k+1}-\vz \rangle
\leq \eta_k (h(\vz)-h(\vz_{k+1}))$ for any $\vz\in \mathcal{Z}$. 
We apply the three-point identity \blue{of Bregman distance~\cite{Chen1993}} and rearrange the terms to get 
\begin{align}
    & \phantom{{}\leq {}}\eta_k (\langle F(\vz_{k+1}),\vz_{k+1}-\vz\rangle+h(\vz_{k+1})-h(\vz)) \nonumber \\ &\leq 
    D_{\Phi}(\vz,\vz_k)-D_{\Phi}(\vz,\vz_{k+1})-D_{\Phi}(\vz_{k+1},\vz_k)
    \leq D_{\Phi}(\vz,\vz_k)-D_{\Phi}(\vz,\vz_{k+1}).\label{eq:key_relation_BPP}
\end{align}
\myalert{Proof of Part~\ref{item:BPP_a}.}
In this part, we assume that Assumption~\ref{assum:monotone} holds. We first set $\vz=\vz^*$ in \eqref{eq:key_relation_BPP}. By invoking Lemma~\ref{lem:saddle_point} with $\vz=\vz_{k+1}$ and $\mu=0$, we get $D_{\Phi}(\vz^*,\vz_{k+1}) \leq D_{\Phi}(\vz^*,\vz_k)$ for any $k\geq 0$. 
Next, we sum both sides of \eqref{eq:key_relation_BPP} over $k=0,1,\ldots,N-1$ to get 
\begin{equation}\label{eq:BPP_unnormalized}
    \sum_{k=0}^{N-1} \eta_k (\langle F(\vz_{k+1}),\vz_{k+1}-\vz\rangle+h(\vz_{k+1})-h(\vz))
    \leq D_{\Phi}(\vz,\vz_0)-D_{\Phi}(\vz,\vz_N) \leq D_{\Phi}(\vz,\vz_0).
\end{equation}
The result follows by dividing both sides of \eqref{eq:BPP_unnormalized} by $\sum_{k=0}^{N-1} \eta_k$ and applying Lemma~\ref{lem:duality_gap}. 

\myalert{Proof of Part~\ref{item:BPP_b}.}
In this part, we assume that Assumption~\ref{assum:strongly_monotone} holds. We set $\vz=\vz^*$ in \eqref{eq:key_relation_BPP} and apply Lemma~\ref{lem:saddle_point} with $\vz=\vz_{k+1}$ to get $(1+\mu\eta_k) D_{\Phi}(\vz^*,\vz_{k+1}) \leq D_{\Phi}(\vz^*,\vz_k)$ for any $k \geq 0$. 
The result now follows by induction on the iteration counter $k$. 

\section{Proofs for generalized optimistic method}
Our proof relies on a carefully designed Lyapunov function. We define the function and discuss its properties in the following lemma, which is the cornerstone of proving Proposition~\ref{prop:GOMD}. 
\begin{lemma}\label{lem:Lyapunov_function}
    Let $\{\vz_k\}_{k\geq 0}$ be the iterates generated by Algorithm~\ref{alg:GOMD} with $\mu\geq 0$, 
    $0<\alpha\leq 1$, and $\hat{\eta}_k$ chosen as in Proposition~\ref{prop:GOMD}. Define a Lyapunov function 
    \begin{equation}\label{eq:def_Lyapunov}
        V(\vz_k,\vz_{k-1};\vz) =  -\frac{\eta_{k-1}}{1+\eta_{k-1}\mu}\langle F(\vz_{k})-P(\vz_{k};\mathcal{I}_{k-1}),\vz_{k}-\vz \rangle+D_{\Phi}(\vz,\vz_k)%
        +\frac{\alpha\|\vz_{k}-\vz_{k-1}\|^2}{4(1+\eta_{k-1}\mu)^2},
    \end{equation}
    where $\vz\in \mathcal{Z}$ is an arbitrary point. Then for any $\vz\in \mathcal{Z}$ and $k\geq 0$, we have:%
    \begin{enumerate}[label=(\alph*),leftmargin=0.7cm]
        \item \label{item:bound_on_Lyapunov} %
        $V(\vz_{0},\vz_{-1};\vz)=D_{\Phi}(\vz,\vz_0)$ and $V(\vz_k,\vz_{k-1};\vz)\geq \frac{2-\alpha}{2}D_{\Phi}(\vz,\vz_k)$;
        \item \label{item:Lyapunov_relation}
        $\eta_k(\langle F(\vz_{k+1}),\vz_{k+1}-\vz\rangle+h(\vz_{k+1})-h(\vz)-\mu D_{\Phi}(\vz,\vz_{k+1})) \!\leq\! V(\vz_k,\vz_{k-1};\vz)-(1+\eta_k\mu)V(\vz_{k+1},\vz_{k};\vz)-\frac{1-\alpha}{2}\|\vz_{k+1}-\vz_k\|^2.$ %
    \end{enumerate}  
\end{lemma}
\begin{proof}
    To simplify the notation, we define $\mathcal{E}_k := F(\vz_{k})-P(\vz_{k};\mathcal{I}_{k-1})$ and thus Condition \eqref{eq:stepsize_upper_bound} implies that $\eta_k\|\mathcal{E}_k\| \leq \frac{\alpha}{2}\|\vz_{k+1}-\vz_k\|$. 
    For Part~\ref{item:bound_on_Lyapunov}, note that we initialize Algorithm~\ref{alg:GOMD} with $\vz_{-1}=\vz_0$ and 
    $P(\vz_0;\mathcal{I}_{-1})=F(\vz_0)$. Hence, it is straightforward to verify that $V(\vz_{0},\vz_{-1};\vz)=D_{\Phi}(\vz,\vz_0)$. 
    Moreover, we can lower bound $V(\vz_k,\vz_{k-1};\vz)$ by 
    \begin{align}
        V(\vz_k,\vz_{k-1};\vz) &\geq -\frac{\eta_{k-1}}{1+\eta_{k-1}\mu}\|\mathcal{E}_k\|_*\|\vz_{k}-\vz\|+D_{\Phi}(\vz,\vz_k)
        +\frac{\alpha\|\vz_{k}-\vz_{k-1}\|^2}{4(1+\eta_{k-1}\mu)^2} \label{eq:line1_CS}\\
        &\geq -\frac{\alpha}{2(1+\eta_{k-1}\mu)}\|\vz_{k}-\vz_{k-1}\|\|\vz_{k}-\vz\|+D_{\Phi}(\vz,\vz_k)
        +\frac{\alpha\|\vz_{k}-\vz_{k-1}\|^2}{4(1+\eta_{k-1}\mu)^2}\label{eq:line2_stepsize}\\
        &\geq -\frac{\alpha}{4}\|\vz_{k}-\vz\|^2+D_{\Phi}(\vz,\vz_k)\label{eq:line3_young}\\
        &\geq \frac{2-\alpha}{2}D_{\Phi}(\vz,\vz_k),\label{eq:line4_sc}
    \end{align}
    where we used the generalized Cauchy-Schwarz inequality in \eqref{eq:line1_CS}, Condition \eqref{eq:stepsize_upper_bound}
    in \eqref{eq:line2_stepsize}, Young's inequality\footnote{In this paper, it refers to the elementary inequality that $ab\leq \frac{1}{2}a^2+\frac{1}{2}b^2$ for any $a,b\in \mathbb{R}$.} in \eqref{eq:line3_young}, 
    and the strong convexity of $\Phi$ in \eqref{eq:line4_sc}. This completes the proof for Part~\ref{item:bound_on_Lyapunov}. 
    
    For Part~\ref{item:Lyapunov_relation}, by the definition of $H$ in \eqref{eq:def_of_F_H}, the update rule \eqref{eq:GOM_inclusion} implies that 
    \(%
        \langle \eta_k P(\vz_{k+1};\mathcal{I}_k)+\hat{\eta}_k\mathcal{E}_k+\nabla\Phi(\vz_{k+1})-\nabla\Phi(\vz_k),\vz_{k+1}-\vz\rangle 
        \leq \eta_k(h(\vz)-h(\vz_{k+1}))  
    \)
    for any $\vz\in \mathcal{Z}$. Moreover, note that the step size $\hat{\eta}_k$ in Proposition~\ref{prop:GOMD} can be written as $\hat{\eta}_k=\eta_{k-1}/(1+\eta_{k-1}\mu)$ for $\mu\geq 0$. 
    Hence, by applying the three-point identity \blue{of Bregman distance~\cite{Chen1993}}, using $\mathcal{E}_{k+1} = F(\vz_{k+1})-P(\vz_{k+1};\mathcal{I}_{k})$, and rearranging the terms, we obtain 
        \begin{align}
            &\phantom{\leq}\;\;\eta_k(\langle F(\vz_{k+1}),\vz_{k+1}-\vz\rangle+h(\vz_{k+1})-h(\vz)-\mu D_{\Phi}(\vz,\vz_{k+1})) \nonumber\\ 
            &\leq \eta_k\langle F(\vz_{k+1})-P(\vz_{k+1};\mathcal{I}_k),\vz_{k+1}-\vz \rangle
            -\frac{\eta_{k-1}}{1+\eta_{k-1}\mu}\langle \mathcal{E}_{k},\vz_{k+1}-\vz \rangle %
            -\eta_k\mu D_{\Phi}(\vz,\vz_{k+1}) \nonumber\\ 
            &\phantom{{}\leq{}}
            -D_{\Phi}(\vz_{k+1},\vz_k)-D_{\Phi}(\vz,\vz_{k+1})+D_{\Phi}(\vz,\vz_k) \nonumber\\ 
            &= (1+\eta_k\mu)\Bigl[\frac{\eta_k\langle \mathcal{E}_{k+1},\vz_{k+1}-\vz \rangle}{1+\eta_k\mu}\!-\!D_{\Phi}(\vz,\vz_{k+1})\Bigr]
            \!-\!\Bigl[\frac{\eta_{k-1} \langle \mathcal{E}_{k},\vz_{k}-\vz \rangle}{1+\eta_{k-1}\mu}\!-\!D_{\Phi}(\vz,\vz_k)\Bigr] \nonumber\\ 
            &\phantom{{}\leq{}} +\frac{\eta_{k-1}}{1+\eta_{k-1}\mu}\langle \mathcal{E}_k,\vz_{k}-\vz_{k+1} \rangle-D_{\Phi}(\vz_{k+1},\vz_k). \label{eq:GOMD_key}
        \end{align}
        The first two bracketed terms in \eqref{eq:GOMD_key} resemble the Lyapunov function defined in \eqref{eq:def_Lyapunov}, and 
        now we upper bound the remaining terms. By the generalized Cauchy-Schwarz inequality, the condition in \eqref{eq:stepsize_upper_bound} and Young's inequality, we have 
        \begin{align*}
            \frac{\eta_{k-1}\langle \mathcal{E}_k,\vz_{k}-\vz_{k+1} \rangle }{1+\eta_{k-1}\mu}
            \leq \frac{\eta_{k-1}\|\mathcal{E}_k\|_{*}\|\vz_{k+1}-\vz_{k}\|}{1+\eta_{k-1}\mu}&\leq \frac{\alpha \|\vz_{k}-\vz_{k-1}\|\|\vz_{k+1}-\vz_{k}\|}{2(1+\eta_{k-1}\mu)}\\
            &\leq \frac{\alpha\|\vz_{k}-\vz_{k-1}\|^2}{4(1+\eta_{k-1}\mu)^2}+\frac{\alpha}{4}\|\vz_{k+1}-\vz_{k}\|^2 %
        \end{align*}
        Thus, by using the strong convexity of $\Phi$, we have $\frac{\eta_{k-1}}{1+\eta_{k-1}\mu}\langle \mathcal{E}_k,\vz_{k}-\vz_{k+1} \rangle-D_{\Phi}(\vz_{k+1},\vz_k) \leq \frac{\alpha\|\vz_{k}-\vz_{k-1}\|^2}{4(1+\eta_{k-1}\mu)^2}+\frac{\alpha}{4}\|\vz_{k+1}-\vz_{k}\|^2 - \frac{1}{2}\|\vz_{k+1}-\vz_k\|^2 \leq \frac{\alpha\|\vz_{k}-\vz_{k-1}\|^2}{4(1+2\eta_{k-1}\mu)^2}-\frac{\alpha\|\vz_{k+1}-\vz_{k}\|^2}{4(1+2\eta_{k}\mu)}-\frac{1-\alpha}{2}\|\vz_{k+1}-\vz_{k}\|^2$. Then  
        Part~\ref{item:Lyapunov_relation}
        follows directly from combining \eqref{eq:GOMD_key} and the above inequality. 
\end{proof}
\subsection{Proof of \texorpdfstring{Proposition~\ref{prop:GOMD}}{Proposition 4.1}}
\label{appen:proof_of_GOMD}

\myalert{Proof of Part~\ref{item:GOMD_monotone}.}
In this part, we assume that Assumption~\ref{assum:monotone} holds and hence $\mu=0$. By Lemma~\ref{lem:saddle_point}, for any $k\geq 0$ we have $0 \leq \eta_k(\langle F(\vz_{k+1}),\vz_{k+1}-\vz^*\rangle+h(\vz_{k+1})-h(\vz^*))$. Moreover, by using Lemma~\ref{lem:Lyapunov_function}(b) with $\vz=\vz^*$, we further have $ 0 \leq \eta_k(\langle F(\vz_{k+1}),\vz_{k+1}-\vz^*\rangle+h(\vz_{k+1})-h(\vz^*)) \leq V(\vz_k,\vz_{k-1};\vz^*)-V(\vz_{k+1},\vz_{k};\vz^*)-\frac{1-\alpha}{2}\|\vz_{k+1}-\vz_k\|^2 \leq V(\vz_k,\vz_{k-1};\vz^*)-V(\vz_{k+1},\vz_{k};\vz^*)$. 
\blue{Thus, the Lyapunov function $V(\vz_k,\vz_{k-1};\vz)$ is nonincreasing w.r.t. $k$}, which implies that $V(\vz_{N},\vz_{N-1};\vz^*)\leq V(\vz_0,\vz_{-1};\vz^*)$. Then by Lemma~\ref{lem:Lyapunov_function}(a), we get $ \frac{2-\alpha}{2} D_{\Phi}(\vz^*,\vz_N)\leq V(\vz_N,\vz_{N-1};\vz^*)\leq V(\vz_0,\vz_{-1};\vz^*)=D_{\Phi}(\vz^*,\vz_0)$,  
which proves \eqref{eq:bounded_iterates}. 

Next, note that $\frac{1-\alpha}{2}\|\vz_{k+1}-\vz_k\|^2\geq 0$ and hence from Lemma~\ref{lem:Lyapunov_function}(b) we get 
\begin{equation*}
        \eta_k(\langle F(\vz_{k+1}),\vz_{k+1}-\vz\rangle+h(\vz_{k+1})-h(\vz))
        \leq V(\vz_k,\vz_{k-1};\vz)-V(\vz_{k+1},\vz_{k};\vz)
\end{equation*}
for any $\vz\in \mathcal{Z}$ and $k\geq 0$. Summing the above inequality over $k=0,1,\ldots,N-1$ leads to 
\begin{align*}
    \sum_{k=0}^{N-1}\eta_k(\langle F(\vz_{k+1}),\vz_{k+1}-\vz\rangle+h(\vz_{k+1})-h(\vz))
    &\leq V(\vz_0,\vz_{-1};\vz)-V(\vz_{N},\vz_{N-1};\vz) \\ 
    &\leq D_{\Phi}(\vz,\vz_0)-\frac{2-\alpha}{2}D_{\Phi}(\vz,\vz_{N-1}) \leq D_{\Phi}(\vz,\vz_0),
\end{align*}
where we used Lemma~\ref{lem:Lyapunov_function}(a) in the second inequality. Now Part~\ref{item:GOMD_monotone} follows by similar arguments as in the proof of Proposition~\ref{prop:BPP}(a).

\myalert{Proof of Part~\ref{item:GOMD_strongly_monotone}.}
In this part, we assume that Assumption~\ref{assum:strongly_monotone} holds. We invoke Lemma~\ref{lem:Lyapunov_function}(b) with $\vz=\vz^*$ and Lemma~\ref{lem:saddle_point} to get
\(    0 \leq \eta_k(\langle F(\vz_{k+1}),\vz_{k+1}-\vz^*\rangle+h(\vz_{k+1})-h(\vz^*)-\mu D_{\Phi}(\vz^*,\vz_{k+1})) 
    \leq V(\vz_k,\vz_{k-1};\vz^*)-(1+\eta_k\mu)V(\vz_{k+1},\vz_{k};\vz^*) \).
This implies that $V(\vz_{k+1},\vz_{k};\vz^*) \leq V(\vz_{k},\vz_{k-1};\vz^*) (1+\eta_k\mu)^{-1}$ for any $k \geq 0$, and hence
by induction we have $V(\vz_N,\vz_{N-1};\vz^*) \leq V(\vz_0,\vz_{-1};\vz^*) \prod_{k=0}^{N-1} (1+\eta_k\mu)^{-1}$.
Finally, the claim follows from the fact that $V(\vz_{0},\vz_{-1};\vz)=D_{\Phi}(\vz,\vz_0)$ and $V(\vz_k,\vz_{k-1};\vz)\geq \frac{2-\alpha}{2}D_{\Phi}(\vz,\vz_k)$ (cf. Lemma~\ref{lem:Lyapunov_function}(a)).

\subsection{Proof of \texorpdfstring{Lemma~\ref{lem:sum_of_square}}{Lemma 4.2}}
\label{appen:proof_of_SOS}
Since we can regard the convex-concave setting as a special case of the strongly-convex-strongly-concave setting with $\mu=0$, we will prove Lemma~\ref{lem:sum_of_square} under both assumptions in a united way. To simplify the notation, we define the sequence $\{\Gamma_k\}_{k \geq 0}$ by $\Gamma_0 = 1$ and $\Gamma_k = \prod_{l=0}^{k-1} \left(1+{\eta_l}\mu\right)$ for $k\geq 1$. 

We first apply Lemma~\ref{lem:Lyapunov_function}(b) with $\vz=\vz^*$. Together with Lemma~\ref{lem:saddle_point}, 
it implies that 
\begin{equation}\label{eq:desired_bound_rk}
    \frac{1-\alpha}{2}\|\vz_{k+1}-\vz_k\|^2 \leq V(\vz_k,\vz_{k-1};\vz^*)-(1+\eta_k\mu)V(\vz_{k+1},\vz_{k};\vz^*).
\end{equation}
We then multiply both sides of the inequality by $\Gamma_k$ and note that $\Gamma_k (1+\eta_k\mu) = \Gamma_{k+1}$. Summing over $k=0,1,\ldots,N-1$, we obtain that
\begin{equation*}
\begin{aligned}
\frac{1-\alpha}{2}\sum_{k=0}^{N-1}(\|\vz_{k+1}-\vz_{k}\|^2 \Gamma_k) &\leq
\sum_{k=0}^{N-1} (V(\vz_k,\vz_{k-1};\vz^*)\Gamma_k-V(\vz_{k+1},\vz_{k};\vz^*)\Gamma_{k+1}) \\
&= V(\vz_{0},\vz_{-1};\vz^*)-V(\vz_{N},\vz_{N-1};\vz^*)\Gamma_N.
\end{aligned}
\end{equation*}
Then Lemma~\ref{lem:sum_of_square} follows from the facts that
$V(\vz_{0},\vz_{-1};\vz^*)=D_{\Phi}(\vz^*,\vz_0)$ and $V(\vz_{N},\vz_{N-1};\vz^*)\geq 0$ 
(cf. Lemma~\ref{lem:Lyapunov_function}(a)).

\section{Proofs for the second-order optimistic method}

\subsection{Maximal monotonicity of the inclusion problem}\label{appen:maximal}%
\blue{
We first recap some useful properties of monotone operators; see, e.g., \cite{Facchinei2003,Ryu2022}. 
\begin{proposition}\label{prop:facts_on_monotone}
    Let $T:\;\mathbb{R}^d \rightarrow \mathbb{R}^d$ be a single-valued operator.  
    \begin{enumerate}[label=(\alph*)]
        \item A differentiable operator $T$ is monotone if and only if 
        $\langle DT(\vz)\vw,\vw \rangle \geq 0$ for all $\vw\in \mathbb{R}^d$, $\vz\in \dom T$, where 
        $DT(\vz)$ is the Jacobian matrix of $T$ at $\vz$. 
        \item If $T$ is continuous, monotone, and defined on the whole space $\mathbb{R}^d$, then 
        $T$ is maximal monotone. 
    \end{enumerate}
\end{proposition}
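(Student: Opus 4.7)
The plan is to handle the two parts separately, with part (a) proceeding via a perturbation argument (forward direction) and the fundamental theorem of calculus (reverse direction), and part (b) using a contradiction argument exploiting the continuity of $T$.

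For part (a), first I would show the forward direction: assuming $T$ is monotone and differentiable at $\vz\in\dom T$, I fix $\vw \in \mathbb{R}^d$ and apply the monotonicity inequality to the pair $(\vz, T(\vz))$ and $(\vz+t\vw, T(\vz+t\vw))$ for small $t>0$, obtaining $\langle T(\vz+t\vw)-T(\vz), t\vw\rangle \geq 0$. Dividing by $t^2$ and letting $t\to 0^+$, the differentiability of $T$ yields $\langle DT(\vz)\vw,\vw\rangle\geq 0$. For the reverse direction, I would invoke the fundamental theorem of calculus along the segment between two arbitrary points $\vz, \vz'\in\dom T$ (assuming $\dom T$ is convex, which is standard in this context):
\begin{equation*}
    \langle T(\vz')-T(\vz),\vz'-\vz\rangle = \int_0^1 \langle DT(\vz+t(\vz'-\vz))(\vz'-\vz),\vz'-\vz\rangle\,dt \geq 0,
\end{equation*}
where the nonnegativity of the integrand follows from the hypothesis on $DT$.

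For part (b), I would argue by contradiction. Suppose there exists a monotone operator $S$ with $\mathrm{Gr}(T)\subsetneq \mathrm{Gr}(S)$, so there is $(\vz_0,\vv_0)\in \mathrm{Gr}(S)$ with $\vv_0\neq T(\vz_0)$. Monotonicity of $S$ applied to $(\vz_0,\vv_0)$ and the points $(\vz, T(\vz))\in \mathrm{Gr}(T)\subset \mathrm{Gr}(S)$ gives $\langle \vv_0-T(\vz),\vz_0-\vz\rangle\geq 0$ for all $\vz \in \mathbb{R}^d$. Substituting $\vz=\vz_0+t\vw$ for arbitrary $\vw\in \mathbb{R}^d$ and $t>0$, dividing by $t$, and letting $t\to 0^+$ while invoking the continuity of $T$ at $\vz_0$, I obtain $\langle T(\vz_0)-\vv_0,\vw\rangle\geq 0$ for all $\vw$, which forces $T(\vz_0)=\vv_0$, contradicting the choice of $(\vz_0,\vv_0)$.

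The main obstacle is minor but worth noting: in part (a) the reverse direction implicitly uses that the segment $[\vz,\vz']$ lies in $\dom T$, so one must either assume $\dom T$ is open and convex (or handle the statement on a convex open subset of $\dom T$) to apply the integral representation. In part (b) the delicate point is ensuring that the single-valued extension is handled correctly, i.e., that $(\vz, T(\vz))\in \mathrm{Gr}(S)$ for every $\vz$ since $\mathrm{Gr}(T)\subset\mathrm{Gr}(S)$, so that we may test monotonicity against every such point; this is automatic from the hypothesis $\mathrm{Gr}(T)\subsetneq\mathrm{Gr}(S)$. Both parts are standard and the arguments are short, but I would make the domain assumption explicit in part (a) to keep the limiting calculations rigorous.
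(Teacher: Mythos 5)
The paper does not actually prove Proposition~\ref{prop:facts_on_monotone}: it states both facts as standard and refers the reader to \cite{Facchinei2003}. Your proof is correct and supplies exactly the textbook arguments — a difference-quotient limit plus the fundamental-theorem-of-calculus integral for part (a), and the ``test along rays through $\vz_0$'' continuity argument for part (b); the sign bookkeeping in (b) works out, since after substituting $\vz=\vz_0+t\vw$, dividing by $t>0$, and letting $t\to 0^+$ one gets $\langle T(\vz_0)-\vv_0,\vw\rangle\geq 0$ for every $\vw$, forcing $T(\vz_0)=\vv_0$. Your caveat about convexity of $\dom T$ in part (a) is the right thing to flag in general, though it is moot here because the proposition already posits $T:\mathbb{R}^d\to\mathbb{R}^d$, i.e.\ $\dom T=\mathbb{R}^d$.
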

Now we are ready to prove that the inclusion problem in \eqref{eq:OMD_2nd_ls} is maximal monotone. We break the proof into two steps: 
\begin{itemize}
    \item To begin with, we show that $T^{(1)}(\vz;\vz_k)=F(\vz_k)+DF(\vz_k)(\vz-\vz_k)$ is maximal monotone in $\vz$. Since $F$ is monotone, it follows from Proposition~\ref{prop:facts_on_monotone}(a) that 
    $
        \langle T^{(1)}(\vz;\vz_k)-T^{(1)}(\vz';\vz_k), \vz-\vz' \rangle = \langle DF(\vz_k)(\vz-\vz'), \vz-\vz' \rangle \geq 0
    $ for any $\vz,\vz'\in \mathbb{R}^{m+n}$. By definition, this implies that $T^{(1)}(\vz;\vz_k)$ is monotone in $\vz$. Furthermore, 
    since $T^{(1)}(\cdot;\vz_k)$ is continuous and defined on the whole space $\mathbb{R}^{m+n}$, by Proposition~\ref{prop:facts_on_monotone}(b) $T^{(1)}(\vz;\vz_k)$ is also maximal monotone.
    \item Moreover, since $H$ and $\nabla \Phi$ are the subdifferentials of proper closed convex functions, both are maximal monotone operators. Thus, we conclude that the inclusion problem in \eqref{eq:OMD_2nd_ls} is maximal monotone. 
\end{itemize}
}

\subsection{Proof of \texorpdfstring{Lemma~\ref{lem:beta_optimal_point_2nd}}{Lemma 6.1}}
\label{appen:beta_optimal_2nd}
We will need the following lemma adapted from \cite[Lemma~4.3]{Monteiro2012}.
\begin{lemma}\label{lem:resolvent}
    Suppose $A$ is a maximal monotone operator and $\Phi$ is $1$-strongly convex and $L_{\Phi}$-smooth w.r.t. $\|\cdot\|$ on~$\mathcal{Z}$.  
    \blue{Let $\vz(\eta;\vw)$ denote the unique solution of the monotone inclusion problem $0 \in \eta A(\vz) + \nabla \Phi(\vz)-\vw.$} 
    If $0<\eta<\eta'$, then 
    \begin{equation}\label{eq:eta_eta_prime}
        \blue{\|\nabla \Phi(\vz(\eta';\vw))-\vw\|_* \leq \frac{\eta'}{\eta} \sqrt{L_{\Phi}}\|\nabla \Phi(\vz(\eta;\vw))-\vw\|_*}.
    \end{equation}
\end{lemma}
\begin{proof}
    \blue{To simplify the notation, let $\vz:=\vz(\eta;\vw)$, $\vu:=(\vw-\nabla \Phi(\vz))/\eta$, $\vz':=\vz(\eta';\vw)$ and $\vu':=(\vw-\nabla \Phi(\vz'))/\eta'$.} 
    In these notations, our goal in \eqref{eq:eta_eta_prime} is equivalent to $\|\vu'\|_*\leq \sqrt{L_{\Phi}}\|\vu\|_*$.
    
    By definition, we can write 
    \begin{equation}\label{eq:inclusion_A_inverse}
        \vu=\frac{\vw-\nabla \Phi(\vz)}{\eta} \quad \Leftrightarrow \quad \vz = \nabla\Phi^*(\vw-\eta\vu) \qquad \text{and} \qquad 0 \in \eta A(\vz) + \nabla \Phi(\vz)-\vw \quad \Leftrightarrow \quad \vz \in A^{-1}(\vu),
    \end{equation}
    where $\nabla \Phi^*$ denotes the Fenchel conjugate of $\Phi$. 
    Moreover, by the assumptions on $\Phi$, we note that $\Phi^*$ is $1/L_{\Phi}$-strongly convex and $1$-smooth w.r.t. $\|\cdot\|_{*}$ (see \cite[Theorem 5.26]{Beck2017}). 
    From \eqref{eq:inclusion_A_inverse} we get $\nabla\Phi^*(\vw-\eta\vu) \in A^{-1}(\vu)$, and similarly 
    $
        \nabla\Phi^*(\vw-\eta'\vu')\in A^{-1}(\vu')
    $. 
    Since $A$ is maximal monotone, so is the operator $A^{-1}$. Thus, we get 
\begin{align}
  \langle\nabla \Phi^{*}(\vw-\eta \vu)-\nabla \Phi^{*}(\vw-\eta' \vu'), \vu-\vu'\rangle &\geq 0 \nonumber \\
  \Leftrightarrow \quad \langle \nabla \Phi^{*}(\vw-\eta \vu')-\nabla \Phi^{*}(\vw-\eta' \vu'),\vu-\vu'\rangle &\geq \langle \nabla \Phi^{*}(\vw-\eta \vu')-\nabla \Phi^{*}(\vw-\eta \vu),\vu-\vu'\rangle. \label{eq:monotone_A_inverse}
\end{align}
For the right-hand side of \eqref{eq:monotone_A_inverse}, we have from the convexity of $\Phi^*$ that   
\begin{equation}
  \langle \nabla \Phi^{*}(\vw-\eta \vu')-\nabla \Phi^{*}(\vw-\eta \vu),\vu-\vu'\rangle
   = \frac{1}{\eta}\langle \nabla \Phi^{*}(\vw-\eta \vu')-\nabla \Phi^{*}(\vw-\eta \vu),\vw-\eta \vu'-(\vw-\eta \vu)\rangle \geq 0. \label{eq:RHS_monotone}
\end{equation}
For the left-hand side of \eqref{eq:monotone_A_inverse}, we can use the three-point identity \blue{of Bregman distance~\cite{Chen1993}} to get 
\begin{align}
    &\langle \nabla \Phi^{*}(\vw-\eta \vu')-\nabla \Phi^{*}(\vw-\eta' \vu'),\vu-\vu'\rangle \nonumber\\
    =&\;\frac{1}{\eta'-\eta}\langle \nabla \Phi^{*}(\vw-\eta \vu')-\nabla \Phi^{*}(\vw-\eta' \vu'),[\vw-\eta'\vu'+(\eta'-\eta)\vu]-(\vw-\eta\vu')\rangle \nonumber\\
    =& -\frac{1}{\eta'-\eta} [D_{\Phi^*}(\vw-\eta \vu',\vw-\eta' \vu')+D_{\Phi^*}(\vw-\eta'\vu'+(\eta'-\eta)\vu,\vw-\eta \vu') \nonumber\\
    &\qquad \qquad -D_{\Phi^*}(\vw-\eta'\vu'+(\eta'-\eta)\vu,\vw-\eta' \vu')]. \label{eq:LHS_monotone}
\end{align}
Since $\eta'>\eta$, \eqref{eq:monotone_A_inverse}, \eqref{eq:RHS_monotone}, and \eqref{eq:LHS_monotone} together imply that 
\begin{equation}\label{eq:D_inequality}
  D_{\Phi^*}(\vw-\eta \vu',\vw-\eta' \vu') \leq D_{\Phi^*}(\vw-\eta'\vu'+(\eta'-\eta)\vu,\vw-\eta' \vu').
\end{equation}
Furthermore, since $\Phi^*$ is $1/L_{\Phi}$-strongly convex and $1$-smooth w.r.t. $\|\cdot\|_{*}$, we have
\begin{equation}\label{eq:phi_conjugate}
    D_{\Phi^*}(\vw-\eta \vu',\vw-\eta' \vu') \geq \frac{1}{2L_{\Phi}}\|(\eta'-\eta)\vu'\|_*^2 \quad \text{and} \quad
    D_{\Phi^*}(\vw-\eta'\vu'+(\eta'-\eta)\vu,\vw-\eta' \vu') \leq \frac{1}{2}\|(\eta'-\eta)\vu\|_*^2.
\end{equation} 
Combining \eqref{eq:D_inequality} and \eqref{eq:phi_conjugate} gives us $\|\vu'\|_*\leq \sqrt{L_{\Phi}}\|\vu\|_*$. The proof is complete.
\end{proof}

As a corollary of Lemma~\ref{lem:resolvent}, we have the following result. 
\begin{lemma}\label{lem:eta_eta_prime}
    Suppose that $0<\eta<\eta'$. Let $\vz = \vz(\eta;\vz^{-})$ and $\vz'=\vz(\eta';\vz^{-})$ be the solution of \eqref{eq:simplified_subproblem} with step size $\eta$ and $\eta'$, respectively. Then we have $\|\vz'-\vz^{-}\| \leq 
    \frac{L_{\Phi}^{3/2}\eta'}{\eta} \|\vz-\vz^{-}\| + \left(1+\frac{\sqrt{L_{\Phi}}\eta'}{\eta}\right)\|\vv^{-}\|_*$.
\end{lemma}
\begin{proof}
    First note that 
    the inclusion problem in \eqref{eq:simplified_subproblem} can be written as $0\in \eta A(\vz)+\nabla \Phi(\vz)-(\nabla \Phi(\vz^{-})-\vv^{-})$, 
    where $A=P+H$ is a maximal monotone 
    operator. Hence, by Lemma~\ref{lem:resolvent} we have $\|\nabla \Phi(\vz')-\nabla \Phi(\vz^{-})+\vv^{-}\|_* \leq \frac{\eta'}{\eta}\sqrt{L_\Phi}\|\nabla \Phi(\vz)-\nabla \Phi(\vz^{-})+\vv^{-}\|_*$. 
    Thus, we can further use the triangle inequality to bound $\|\nabla \Phi (\vz')-\nabla \Phi (\vz^{-})\|_* \leq \|\nabla \Phi(\vz')-\nabla \Phi(\vz^{-})+\vv^{-}\|_*+\|\vv^{-}\|_* \leq \frac{\eta'}{\eta}\sqrt{L_{\Phi}}\|\nabla \Phi(\vz)-\nabla \Phi(\vz^{-})+\vv^{-}\|_*+\|\vv^{-}\|_* \leq \frac{\eta'}{\eta}\sqrt{L_{\Phi}}\|\nabla \Phi(\vz)-\nabla \Phi(\vz^{-})\|_*+\Bigl(1+\frac{\eta'}{\eta}\sqrt{L_{\Phi}}\Bigr)\|\vv^{-}\|_*$. 
    Moreover, since $\Phi$ is 1-strongly convex and $L_{\Phi}$-smooth w.r.t. $\|\cdot\|$, we have 
    \begin{align*}
        \|\vz'-\vz^{-}\| \leq \|\nabla \Phi (\vz')-\nabla \Phi (\vz^{-})\|_* &\leq \frac{\eta'}{\eta}\sqrt{L_{\Phi}}\|\nabla \Phi(\vz)-\nabla \Phi(\vz^{-})\|_*+\Bigl(1+\frac{\eta'}{\eta}\sqrt{L_{\Phi}}\Bigr)\|\vv^{-}\|_* \\
        &\leq \frac{\eta'}{\eta}L_{\Phi}^{{3}/{2}}\|\vz-\vz^{-}\|+\Bigl(1+\frac{\eta'}{\eta}\sqrt{L_{\Phi}}\Bigr)\|\vv^{-}\|_* .
    \end{align*}
    This completes the proof.  
\end{proof}

Now we are ready to prove Lemma~\ref{lem:beta_optimal_point_2nd}. To simplify the notation, we drop the subscript $k$ and denote $\vz_{k}$ by $\vz^{-}$. 
Since $k\in \mathcal{B}$, by Algorithm~\ref{alg:ls}, the condition in \eqref{eq:simplified_step_bound} is violated for $\eta' = \eta/\beta$ and $\vz' = \vz(\eta'; \vz^-)$ (Otherwise, the line search scheme would choose $\eta'$ instead of $\eta$).
Hence, we have {$\frac{1}{2}\alpha\|\vz'-\vz^{-}\| 
<\eta' \|F(\vz')-T^{(1)}(\vz';\vz^{-})\|_*
\leq \frac{\eta' L_2}{2} \|\vz'-\vz^{-}\|^2$,}   
where we used \eqref{eq:2nd_smooth} in the last inequality. This implies that 
$\eta'>\frac{\alpha}{L_2\|\vz'-\vz^{-}\|}$, and we further have $\eta \geq \beta\eta'>\frac{\alpha\beta}{L_2\|\vz'-\vz^{-}\|}$. Together with Lemma~\ref{lem:eta_eta_prime} and the fact that $\eta'/\eta \leq 1/\beta$, this leads to Lemma~\ref{lem:beta_optimal_point_2nd}.

\section{Proofs for the higher-order optimistic method}\label{appen:higher_order}
\subsection{Proof of \texorpdfstring{Lemma~\ref{lem:property_high}}{Lemma 7.1}}
\label{appen:property_high}

\myalert{Proof of Part~\ref{item:approx_high}.}
By using the triangle inequality and \eqref{eq:residual_F}, we have 
\begin{align}
    \| F(\vz')-T^{(p-1)}_{\lambda}(\vz';\vz)\|_* &\leq \|F(\vz')-T^{(p-1)}(\vz';\vz)\|_*+\frac{\lambda}{(p-1)!} (2D_{\Phi}(\vz',\vz))^{\frac{p-1}{2}}\|\nabla \Phi(\vz')-\nabla \Phi(\vz)\|_{*} \nonumber\\
    &\leq \frac{L_{p}}{p!}\|\vz'-\vz\|^p+\frac{\lambda (L_{\Phi}\|\vz'-\vz\|^2)^{\frac{p-1}{2}}L_{\Phi}\|\vz'-\vz\|}{(p-1)!}  %
    = \frac{L_p+p\lambda L_{\Phi}^{\frac{p+1}{2}}}{p!}\|\vz'-\vz\|^p, \nonumber
\end{align}
where we used the fact that $\Phi(\vz)$ is $L_{\Phi}$-smooth in the second inequality. 
This proves Part~\ref{item:approx_high}. 

\myalert{Proof of Part~\ref{item:maximal_monotone_high}.} Note that $T^{(p-1)}_{\lambda}(\cdot;\vz)$ is single-valued and continuous with full domain. Hence, it suffices to show that it is monotone as the maximality follows from Proposition~\ref{prop:facts_on_monotone}(b). %
By definition, we only need to verify that 
\begin{equation}\label{eq:regularized_monotone}
    \langle T^{(p-1)}_{\lambda}(\vz_2;\vz) - T^{(p-1)}_{\lambda}(\vz_1;\vz), \vz_2-\vz_1 \rangle \geq 0, \qquad \forall \vz_1,\vz_2 \in \mathbb{R}^{m+n}. 
\end{equation}
To simplify the notations, denote $R(\vz';\vz)=(2D_{\Phi}(\vz',\vz))^{\frac{p-1}{2}}(\nabla \Phi(\vz')-\nabla \Phi(\vz))$. We first prove that 
\begin{equation}\label{eq:regularization_monotone}
    \langle R(\vz_2;\vz)-R(\vz_1;\vz), \vz_2-\vz_1 \rangle \geq \frac{1}{2}\left(\|\vz_1-\vz\|^{p-1}+\|\vz_2-\vz\|^{p-1}\right)\|\vz_2-\vz_1\|^2.
\end{equation}
Note that we have \blue{$
    D_{\Phi}(\vz_2,\vz) = D_{\Phi}(\vz_1,\vz)+\langle \nabla \Phi(\vz_1)- \nabla \Phi(\vz), \vz_2-\vz_1 \rangle + D_{\Phi}(\vz_2,\vz_1)$ by the three-point identity \blue{of Bregman distance~\cite{Chen1993}}. }
Further, by Bernoulli's inequality that $(1+x)^n \geq 1+nx$ for all $x\geq -1$ and $n\geq 1$, we have 
\begin{align}
    (2D_{\Phi}(\vz_2,\vz))^{\frac{p+1}{2}}  &= (2D_{\Phi}(\vz_1,\vz))^{\frac{p+1}{2}}\Bigl(1+\frac{\langle \nabla \Phi(\vz_1)-\nabla \Phi(\vz), \vz_2-\vz_1 \rangle + D_{\Phi}(\vz_2,\vz_1)}{D_{\Phi}(\vz_1,\vz)}\Bigr)^{\frac{p+1}{2}} \nonumber\\
    & \geq (2D_{\Phi}(\vz_1,\vz))^{\frac{p+1}{2}}\Bigl(1+\frac{p+1}{2}\frac{\langle \nabla \Phi(\vz_1)-\nabla \Phi(\vz), \vz_2-\vz_1 \rangle + D_{\Phi}(\vz_2,\vz_1)}{D_{\Phi}(\vz_1,\vz)}\Bigr) \nonumber\\
    & = (2D_{\Phi}(\vz_1,\vz))^{\frac{p+1}{2}}+(p+1)\langle R(\vz_1;\vz), \vz_2-\vz_1 \rangle + (p+1)(2D_{\Phi}(\vz_1,\vz))^{\frac{p-1}{2}}D_{\Phi}(\vz_2,\vz_1) \nonumber\\
    & \geq  (2D_{\Phi}(\vz_1,\vz))^{\frac{p+1}{2}}+(p+1)\langle R(\vz_1;\vz), \vz_2-\vz_1 \rangle + \frac{p+1}{2}\|\vz_1-\vz\|^{{p-1}}\|\vz_2-\vz_1\|^2. \label{eq:Bernoulli_1}
\end{align}
Similarly, we have 
\begin{equation}\label{eq:Bernoulli_2}
    (2D_{\Phi}(\vz_1,\vz))^{\frac{p+1}{2}} \geq (2D_{\Phi}(\vz_2,\vz))^{\frac{p+1}{2}}+(p+1)\langle R(\vz_2;\vz), \vz_1-\vz_2 \rangle + \frac{p+1}{2}\|\vz_2-\vz\|^{{p-1}}\|\vz_1-\vz_2\|^2.
\end{equation}
Adding \eqref{eq:Bernoulli_1} and \eqref{eq:Bernoulli_2} together yields \eqref{eq:regularization_monotone}. 

Next, we will prove that 
\begin{equation}\label{eq:approx_monotone}
    \langle T^{(p-1)}(\vz_2;\vz)-T^{(p-1)}(\vz_1;\vz), \vz_2-\vz_1\rangle \geq -\frac{L_p}{2(p-1)!}\left(\|\vz_2-\vz\|^{p-1} + \|\vz_1-\vz\|^{p-1}\right)\|\vz_2-\vz_1\|^2. 
\end{equation}
Indeed, by the fundamental theorem of calculus, we can write 
\begin{equation}\label{eq:integral_form}
    \langle T^{(p-1)}(\vz_2;\vz)-T^{(p-1)}(\vz_1;\vz), \vz_2-\vz_1\rangle = 
    \int_{0}^1 \langle DT^{(p-1)}(\vz_1+t(\vz_2-\vz_1);\vz) (\vz_2-\vz_1), \vz_2-\vz_1\rangle dt.
\end{equation}
Furthermore, for any $\vw\in \mathbb{R}^d$ we have 
\begin{align}
    \langle DT^{(p-1)}(\vz';\vz)\vw, \vw \rangle &= 
    \langle DF(\vz')\vw, \vw \rangle + \langle (DT^{(p-1)}(\vz';\vz)-DF(\vz'))\vw, \vw \rangle \nonumber\\
    & \geq -\|DT^{(p-1)}(\vz';\vz)-DF(\vz')\|_{\mathrm{op}}\|\vw\|^2 \label{eq:apply_monotone}\\
    & \geq -\frac{L_p}{(p-1)!}\|\vz'-\vz\|^{p-1}\|\vw\|^2, \label{eq:apply_residual}
\end{align}
where \eqref{eq:apply_monotone} holds as $F$ is monotone (cf. Proposition~\ref{prop:facts_on_monotone}(a)) and in \eqref{eq:apply_residual} we used \eqref{eq:residual_DF}. Hence, \eqref{eq:integral_form} implies that 
\begin{align}
    \langle T^{(p-1)}(\vz_2;\vz)-T^{(p-1)}(\vz_1;\vz), \vz_2-\vz_1\rangle &\geq 
    -\frac{L_p\|\vz_2-\vz_1\|^2}{(p-1)!} \int_{0}^1 \|\vz_1+t(\vz_2-\vz_1)-\vz\|^{p-1} dt \nonumber\\
    &\geq -\frac{L_p\|\vz_2-\vz_1\|^2 }{(p-1)!}\int_{0}^1 \left(t\|\vz_2-\vz\|^{p-1} + (1-t)\|\vz_1-\vz\|^{p-1}\right) dt \label{eq:convexity_of_norm}\\
    & = -\frac{L_p\|\vz_2-\vz_1\|^2}{2(p-1)!}\left(\|\vz_2-\vz\|^{p-1} + \|\vz_1-\vz\|^{p-1}\right),\nonumber
\end{align}
where we used the convexity of the function $\|\cdot\|^{p-1}$ in \eqref{eq:convexity_of_norm}. Since $T^{(p-1)}_{\lambda}(\vz';\vz)=T^{(p-1)}(\vz';\vz)+\frac{\lambda}{(p-1)!}R(\vz';\vz)$, combining \eqref{eq:regularization_monotone} and \eqref{eq:approx_monotone} we conclude that 
\eqref{eq:regularized_monotone} is satisfied when $\lambda\geq L_p$. This proves Part~\ref{item:maximal_monotone_high}.

\subsection{Intermediate results}
\label{appen:beta_optimal_high}
To establish the convergence properties of the proposed $p$-th-order optimistic method, we first state a few intermediate results that will be used in the following sections.

At the $k$-th iteration of our $p$-th-order optimistic method, our goal is to find a pair $(\eta,\vz)$ such that 
\begin{equation}\label{eq:higher_stepsize_condition}
    \eta \|F(\vz)-T^{(p-1)}_{\lambda}(\vz;\vz_{k}))\|_{*}\leq \frac{\alpha}{2}\|\vz-\vz_k\|,
\end{equation}
where $\vz$ is computed via \eqref{eq:high_order_subproblem}. Recall that $ \mathcal{B} =\{k: \eta_k < \sigma_k\}$, i.e., the set of iteration indices where the line search scheme backtracks the step size.
The following lemma is the higher-order counterpart of Lemma~\ref{lem:beta_optimal_point_2nd} and can be proved in a similar way. In particular, we will reuse Lemma~\ref{lem:eta_eta_prime} from Appendix~\ref{appen:beta_optimal_2nd}. 
\begin{lemma}\label{lem:beta_optimal_point_high}
    Let $\{\eta_k\}_{k \geq 0}$ be the step sizes in \eqref{eq:OMD_high_ls} generated by Algorithm~\ref{alg:2nd_optimistic}. Suppose that $F$ is $p$-th-order $L_p$-smooth and $\Phi$ is $L_{\Phi}$-smooth. Further define $\vv_k=\hat{\eta}_k(F(\vz_k)-T^{(p-1)}_\lambda(\vz_k;\vz_{k-1}))$. If $k\in \mathcal{B}$, then we have  
    \begin{equation*}
        \eta_k \geq \frac{p!\alpha\beta^p}{2L_p(\Phi,\lambda)\left(L_{\Phi}^{3/2}\|\vz_{k+1}-\vz_k\|+(\beta+L_{\Phi}^{1/2})\|\vv_k\|_*\right)^{p-1}}.
    \end{equation*}  
\end{lemma}
\begin{proof}
To simplify the notation, we drop the subscript $k$ and denote $\vz_{k}$ by $\vz^{-}$.
Since $k\in \mathcal{B}$, by Algorithm~\ref{alg:ls}, the condition in \eqref{eq:simplified_step_bound} is violated for $\eta' = \eta/\beta$ and $\vz' = \vz(\eta'; \vz^-)$ (Otherwise, the line search scheme would choose $\eta'$ instead of $\eta$).
Hence, we have  
\begin{equation*}
    \frac{1}{2}\alpha\|\vz'-\vz^{-}\| 
    <\eta' \|F(\vz')-T^{(p-1)}_{\lambda}(\vz';\vz_{k}))\|_*
    \leq \frac{\eta' L_p(\Phi,\lambda)}{p!} \|\vz'-\vz^{-}\|^p, 
\end{equation*}
where we used Lemma~\ref{lem:property_high}(a) in the last inequality. This gives us $    \eta \geq \beta \eta'>\frac{p!\alpha\beta}{2L_p(\Phi,\lambda)\|\vz(\eta';\vz^{-})-\vz^{-}\|^{p-1}}$. 
Together with Lemma~\ref{lem:eta_eta_prime}, this leads to Lemma~\ref{lem:beta_optimal_point_high}. 
\end{proof}

By combining Lemma~\ref{lem:beta_optimal_point_2nd} and Lemma~\ref{lem:sum_of_square}, we further present the following result.
\begin{lemma}\label{lem:bound_on_sum_square_inverse_high}
    We have 
    $\sum_{k\in \mathcal{B}} \eta_k^{-\frac{2}{p-1}}{\prod_{l=0}^{k-1} (1+\eta_l\mu)}  \leq \gamma_p^2 (L_p(\Phi,\lambda))^\frac{2}{p-1}D_{\Phi}(\vz^*,\vz_0)$, 
    where $\gamma_p$ is an absolute constant defined in \eqref{eq:def_gamma_p}. 
\end{lemma}
\begin{proof}
    Similar to the proof of Lemma~\ref{lem:sum_of_square}, we will prove both results in a united way by regarding the convex-concave setting as 
    a special case of the strongly-convex-strongly-concave setting with $\mu=0$.
    We first apply Lemma~\ref{lem:beta_optimal_point_high} for $k\in \mathcal{B}$. Note that by the choice of $\hat{\eta}_k$ and \eqref{eq:stepsize_higher}, we can upper bound 
    $\|\vv_k\|_* =\frac{\eta_{k-1}}{1+\eta_{k-1}\mu}\|F(\vz_k)-T^{(p)}_{\lambda}(\vz_k;\vz_{k-1})\|_* \leq \frac{\alpha}{2(1+\eta_{k-1}\mu)}\|\vz_k-\vz_{k-1}\|$.
    Together with Lemma~\ref{lem:beta_optimal_point_high}, this leads to 
    \begin{align*}
        \eta_k^{-\frac{1}{p-1}} & \leq \left(\frac{2L_p(\Phi,\lambda)}{p!\alpha\beta^p}\right)^{\frac{1}{p-1}}\left(L_{\Phi}^{3/2}\|\vz_{k+1}-\vz_k\|+(\beta+L_{\Phi}^{1/2})\|\vv_k\|_*\right) \\
        & \leq c_1(L_p(\Phi,\lambda))^{\frac{1}{p-1}}\|\vz_{k+1}-\vz_k\| + c_2(L_p(\Phi,\lambda))^{\frac{1}{p-1}}\frac{\|\vz_{k}-\vz_{k-1}\|}{1+\eta_{k-1}\mu},
    \end{align*}
    where we let $c_1:=L_{\Phi}^{3/2}(2/(p!\alpha\beta^p))^{\frac{1}{p-1}}$ and $c_2:=L_{\Phi}^{-3/2}(\beta+L_{\Phi}^{1/2})\alpha c_1/2$ to simplify the notation.
    Furthermore, we square both sides of the above inequality and use Young's inequality to get  
    \begin{equation}\label{eq:after_Young_high}
        \eta_k^{-\frac{2}{p-1}} \leq (c_1^2+c_1c_2) (L_p(\Phi,\lambda))^{\frac{2}{p-1}}\|\vz_{k+1}-\vz_k\|^2+\frac{(c_2^2+c_1c_2)(L_p(\Phi,\lambda))^{\frac{2}{p-1}}}{(1+\eta_{k-1}\mu)^2}\|\vz_{k}-\vz_{k-1}\|^2. 
    \end{equation}
    Multiplying both sides of \eqref{eq:after_Young_high} by $\prod_{l=0}^{k-1}(1+\eta_l\mu)$, we further have 
    \begin{equation}\label{eq:bound_Gamma_ls_high}
        \begin{aligned}
            \eta_k^{-\frac{2}{p-1}} \prod_{l=0}^{k-1}(1+\eta_l\mu) 
            &\leq (c_1^2+c_1c_2)(L_p(\Phi,\lambda))^{\frac{2}{p-1}}\|\vz_{k+1}-\vz_k\|^2\prod_{l=0}^{k-1}(1+\eta_l\mu)\\
            &\phantom{{}\leq{}}+(c_2^2+c_1c_2)(L_p(\Phi,\lambda))^{\frac{2}{p-1}}\|\vz_{k}-\vz_{k-1}\|^2\prod_{l=0}^{k-2}(1+\eta_l\mu). 
        \end{aligned}
    \end{equation}
    By summing both sides of \eqref{eq:bound_Gamma_ls_high} over $k\in \mathcal{B}$ and applying Lemma~\ref{lem:sum_of_square}, we get the desired result in Lemma~\ref{lem:bound_on_sum_square_inverse_high}. 
\end{proof}

Finally, we build on Lemma~\ref{lem:bound_on_sum_square_inverse} to control step sizes $\eta_k$ for all $k \geq 0$. 

\begin{lemma}\label{lem:induction_bound_high}
    For any $k \geq 0$, we have 
    \begin{equation}\label{eq:induction_bound_high}
        \eta_k^{-\frac{2}{p-1}}  \prod_{l=0}^{k-1} (1+\eta_l\mu) \leq \max\left\{\gamma_p^2 (L_p(\Phi,\lambda))^\frac{2}{p-1}D_{\Phi}(\vz^*,\vz_0), {\beta^{\frac{2k}{p-1}}} \sigma^{-\frac{2}{p-1}}_0  \right\}. 
    \end{equation}
\end{lemma}
\begin{proof}
    We shall prove Lemma~\ref{lem:induction_bound} by induction. For the base case where $k=0$, we consider two subcases. If $0 \notin \mathcal{B}$, then we have $\eta_0 = \sigma_0$, which directly implies \eqref{eq:induction_bound_high}. Otherwise, since $0\in \mathcal{B}$, we can use Lemma~\ref{lem:bound_on_sum_square_inverse_high} to obtain that ${\eta_0^{-\frac{2}{p-1}}} \leq \sum_{k\in \mathcal{B}} \left(\eta_k^{-\frac{2}{p-1}} \prod_{l=0}^{k-1} (1+\eta_l\mu) \right) \leq \gamma_p^2 (L_p(\Phi,\lambda))^\frac{2}{p-1}D_{\Phi}(\vz^*,\vz_0)$. This proves \eqref{eq:induction_bound_high} for $k=0$. 

    Now assume that \eqref{eq:induction_bound_high} holds for $k = s$ where $s\geq 0$. If $s+1\in \mathcal{B}$,  then similarly we have ${\eta_{s+1}^{-\frac{2}{p-1}}} \prod_{l=0}^{s} (1+\eta_l\mu) \leq \sum_{k\in \mathcal{B}} \left(\eta_{k}^{-\frac{2}{p-1}} \prod_{l=0}^{k-1} (1+\eta_l\mu) \right) \leq \gamma_p^2 (L_p(\Phi,\lambda))^\frac{2}{p-1}D_{\Phi}(\vz^*,\vz_0)$ by Lemma~\ref{lem:bound_on_sum_square_inverse_high}, which implies~\eqref{eq:induction_bound_high}. Otherwise, if $s+1\notin \mathcal{B}$, then by definition we have $\eta_{s+1} = \sigma_{s+1} = \eta_{s} (1+\eta_{s}\mu)^{\frac{p-1}{2}}/\beta$, and furthermore
\begin{equation*}
    \eta_{s+1}^{-\frac{2}{p-1}} \prod_{l=0}^{s} (1+\eta_l\mu) = \beta^{\frac{2}{p-1}}\eta_{s}^{-\frac{2}{p-1}}\prod_{l=0}^{s-1} (1+\eta_l\mu) \leq \beta^{\frac{2}{p-1}}\max\left\{\gamma_p^2 (L_p(\Phi,\lambda))^\frac{2}{p-1}D_{\Phi}(\vz^*,\vz_0), {\beta^{\frac{2s}{p-1}}} \sigma^{-\frac{2}{p-1}}_0 \right\}, 
\end{equation*}
where we used the induction hypothesis in the last inequality. 
In both cases, we prove that \eqref{eq:induction_bound} is satisfied for $k=s+1$, and hence the claim follows by induction.
\end{proof}

\begin{lemma}\label{lem:bound_on_sum_square_inverse_full_high}
    Let $\{\eta_k\}_{k \geq 0}$ be the step sizes in \eqref{eq:OMD_high_ls} generated by Algorithm~\ref{alg:higher_optimistic}. Recall the definition of $\zeta_k$ in \eqref{eq:def_zeta}. Then, 
    we have %
    \(
        \sum_{k=0}^{N-1} \eta_k^{-\frac{2}{p-1}} \zeta_k^{-1}\leq
        \left(1-\beta^{\frac{2}{p-1}}\right)^{-1} \left(\sigma_0^{-\frac{2}{p-1}} +  \gamma_p^2 (L_p(\Phi,\lambda))^\frac{2}{p-1}D_{\Phi}(\vz^*,\vz_0) \right)
    \), where  $\gamma_p$  is defined in \eqref{eq:def_gamma_p}. 
\end{lemma}
\begin{proof}
Recall that $ \mathcal{B} =\{k: \eta_k < \sigma_k\}$, i.e., the set of iteration indices where the line search backtracks.  %
Moreover, in Algorithm~\ref{alg:higher_optimistic}, we have  $\eta_k=\sigma_k$ when $k \notin \mathcal{B}$ and $\sigma_k = \eta_{k-1}(1+\eta_{k-1}\mu)^{\frac{p-1}{2}}/\beta$ for $k \geq 1$. 
Hence, following similar arguments as in the proof of Lemma~\ref{lem:bound_on_sum_square_inverse_strongly_convex}, we first bound 
\begin{align*}
    \sum_{k\notin \mathcal{B}} \eta_k^{-\frac{2}{p-1}} \zeta_k^{-1} =\sum_{k\notin \mathcal{B}} \sigma_k^{-\frac{2}{p-1}} \zeta_k^{-1} &\leq \sigma_0^{-\frac{2}{p-1}} + \beta^{\frac{2}{p-1}} \sum_{k \geq 1, k\notin \mathcal{B}} \eta^{-\frac{2}{p-1}}_{k-1} (1+\eta_{k-1}\mu)^{-1} \zeta_k^{-1} \\
    &=  \sigma_0^{-\frac{2}{p-1}}  + \beta^{\frac{2}{p-1}} \sum_{k \geq 1, k\notin \mathcal{B}} \eta^{-\frac{2}{p-1}}_{k-1} \zeta_{k-1}^{-1} \leq \sigma_0^{-\frac{2}{p-1}}  + \beta^{\frac{2}{p-1}} \sum_{k=0}^{N-1} \eta^{-\frac{2}{p-1}}_{k} \zeta_{k}^{-1},
\end{align*}
where we used $\zeta_{k-1} = \zeta_k(1+\eta_{k-1}\mu)$ in the last equality. This further leads to 
\begin{equation*}
    \sum_{k=0}^{N-1} \eta_k^{-\frac{2}{p-1}} \zeta_k^{-1}  \leq \sum_{k\in \mathcal{B}} \eta_k^{-\frac{2}{p-1}} \zeta_k^{-1} + \sum_{k\notin \mathcal{B}} \eta_k^{-\frac{2}{p-1}} \zeta_k^{-1} \leq \sigma_0^{-\frac{2}{p-1}} + \sum_{k \in \mathcal{B}} \eta_k^{-\frac{2}{p-1}} \zeta_k^{-1} +  \beta^{\frac{2}{p-1}}\sum_{k=0}^{N-1} \eta_{k}^{-\frac{2}{p-1}} \zeta_k^{-1}. 
\end{equation*}
Combining the left-hand side with the last term in the right-hand side, we obtain $\sum_{k=0}^{N-1} \eta_k^{-\frac{2}{p-1}} \zeta_k^{-1} \leq \left(1-\beta^{\frac{2}{p-1}}\right)^{-1} \left(\sigma_0^{-\frac{2}{p-1}} +  \sum_{k \in \mathcal{B}} \eta_k^{-\frac{2}{p-1}} \zeta_k^{-1} \right)$. By applying Lemma~\ref{lem:bound_on_sum_square_inverse_high}, we obtain the desired result in Lemma~\ref{lem:bound_on_sum_square_inverse_full_high}.
\end{proof}

\subsection{Convex-concave case}
\label{appen:OMD_high_ls_convex}

\begin{proof}[Proof of Theorem~\ref{thm:OMD_high_ls_convex}]

By H{\"o}lder's inequality, we have $\left(\sum_{k=0}^{N-1} \eta_k\right)^{\frac{2}{p+1}}
\Bigl(\sum_{k=0}^{N-1} \eta_k^{-\frac{2}{p-1}} \Bigr)^{\frac{p-1}{p+1}} 
\geq N$. 
Together with Lemma~\ref{lem:bound_on_sum_square_inverse_full_high}, this further implies 
\begin{equation*}
    \sum_{k=0}^{N-1} \eta_k \geq \left(\sum_{k=0}^{N-1} \eta_k^{-\frac{2}{p-1}}\right)^{-\frac{p-1}{2}}N^{\frac{p+1}{2}} \geq \frac{\left(1-\beta^{\frac{2}{p-1}}\right)^{\frac{p-1}{2}}}{\left(\sigma_0^{-\frac{2}{p-1}} +  \gamma_p^2 (L_p(\Phi,\lambda))^\frac{2}{p-1}D_{\Phi}(\vz^*,\vz_0) \right)^{\frac{p-1}{2}}}  N^{\frac{p+1}{2}}. 
\end{equation*}
The rest follows from Proposition~\ref{prop:GOMD}.
\end{proof}
\subsection{Strongly-convex-strongly-concave case}
\label{appen:high-order-sc-sc}
Similar to Lemma~\ref{lem:multiplying_factor}, the following lemma is the key to our convergence results.  
\begin{lemma}\label{lem:multiplying_factor_high}
    Let $\{\eta_k\}_{k \geq 0}$ be the step sizes in \eqref{eq:OMD_high_ls} generated by Algorithm~\ref{alg:higher_optimistic} and $\{\zeta_k\}_{k\geq 0}$ be defined in \eqref{eq:def_zeta}. 
    Further, recall the definition of $\gamma_p$ in \eqref{eq:def_gamma_p}. Then for any $0 \leq k_1<k_2\leq N$, we have {$\frac{1}{\zeta_{k_2}} \geq \frac{1}{\zeta_{k_1}}+C_p^{-\frac{p-1}{2}}\left(\sum_{k=k_1}^{k_2-1} \frac{1}{\zeta_k}\right)^{\frac{p+1}{2}}$, where $C_p$ is defined in \eqref{eq:def_kappa_p_C_p}. 
    }
\end{lemma}
\begin{proof}

By the definition of $\zeta_k$ in \eqref{eq:def_zeta}, we have $\eta_k=(\zeta_{k}/\zeta_{k+1}-1)/\mu$.  
We apply Lemma~\ref{lem:bound_on_sum_square_inverse_full_high}
    and rewrite %
    the inequality
    in terms of $\{\zeta_k\}_{k=0}^{N-1}$ as
\begin{align}
    & \sum_{k=0}^{N-1}  \left(\frac{\mu\zeta_{k+1}}{\zeta_k-\zeta_{k+1}}\right)^{\frac{2}{p-1}}\frac{1}{\zeta_k} \leq \left(1-\beta^{\frac{2}{p-1}}\right)^{-1} \left(\sigma_0^{-\frac{2}{p-1}} +  \gamma_p^2 (L_p(\Phi,\lambda))^\frac{2}{p-1}D_{\Phi}(\vz^*,\vz_0) \right) \nonumber\\
    \Leftrightarrow \quad    
    & \sum_{k=0}^{N-1}  \left(\frac{1}{1/\zeta_{k+1}-1/\zeta_{k}}\right)^{\frac{2}{p-1}} \zeta_k^{-\frac{p+1}{p-1}} \leq C_p, \label{eq:bound_sum_square_zeta_high}
\end{align}
where in \eqref{eq:bound_sum_square_zeta_high} we used the definition of $C_p$. 
Since each summand in \eqref{eq:bound_sum_square_zeta_high} is nonnegative, it follows that $    \sum_{k=k_1}^{k_2-1}  \left(\frac{1}{1/\zeta_{k+1}-1/\zeta_{k}}\right)^{\frac{2}{p-1}} \zeta_k^{-\frac{p+1}{p-1}} \leq C_p $ for any $k_2>k_1\geq 0$. 
Furthermore, by applying H{\"o}lder's inequality we get
\begin{gather*}
    \left[\sum_{k=k_1}^{k_2-1} (\frac{1}{\zeta_{k+1}}-\frac{1}{\zeta_k})\right]^{\frac{2}{p+1}}
    \left[\sum_{k=k_1}^{k_2-1}  \left(\frac{1}{1/\zeta_{k+1}-1/\zeta_{k}}\right)^{\frac{2}{p-1}} \zeta_k^{-\frac{p+1}{p-1}} \right]^{\frac{p-1}{p+1}}
    \geq \sum_{k=k_1}^{k_2-1} \frac{1}{\zeta_k}, %
\end{gather*}
and Lemma~\ref{lem:multiplying_factor_high} follows from the above two inequalities.  
\end{proof}

\begin{proof}[Proof of Theorem~\ref{thm:high_strongly_convex_global}]\label{appen:high_strongly_convex_global}

Without loss of generality, we can assume that $ \frac{(2-\alpha)\epsilon}{2{D_{\Phi}(\vz^*,\vz_0)}}\leq 1$; otherwise the result becomes trivial as $\zeta_0 = 1$. Based on \eqref{eq:def_zeta}, $\{\zeta_k\}_{k \geq 0}$ is non-increasing in $k$. Hence, from Lemma~\ref{lem:multiplying_factor_high} we get
\begin{equation*}
    \frac{1}{\zeta_{k_2}} 
    \geq \frac{1}{\zeta_{k_1}}+C_p^{-\frac{p-1}{2}}\left(\sum_{k=k_1}^{k_2-1} \frac{1}{\zeta_k}\right)^{\frac{p+1}{2}} 
    \geq \frac{1}{\zeta_{k_1}}+C_p^{-\frac{p-1}{2}}\left(\frac{k_2-k_1}{\zeta_{k_1}}\right)^{\frac{p+1}{2}}. 
\end{equation*}
In particular, this implies that
$\zeta_{k_2}\leq \frac{1}{2}\zeta_{k_1}$ when we have $k_2-k_1\geq C_p^{\frac{p-1}{p+1}} \zeta_{k_1}^{\frac{p-1}{p+1}}$.  Hence, for any integer $l\geq 0$, we can prove by induction that the number of iterations $N$ to achieve 
$\zeta_N \geq 2^{-l}$ does not exceed
\begin{equation*}
    \sum_{k=0}^{l-1} \left \lceil C_p^{\frac{p-1}{p+1}} 2^{-\frac{p-1}{p+1}k}\right\rceil 
    \leq \sum_{k=0}^{l-1} \left(C_p^{\frac{p-1}{p+1}} 2^{-\frac{p-1}{p+1}k}+1\right) 
    \leq \frac{1}{1-2^{-\frac{p-1}{p+1}}}C_p^{\frac{p-1}{p+1}}+l.
 \end{equation*}
 The bound in \eqref{eq:zeta_high_global_bound} immediately follows by setting $l=\lceil \log_2(\frac{2{D_{\Phi}(\vz^*,\vz_0)}}{(2-\alpha)\epsilon})\rceil \leq \log_2\left(\frac{2{D_{\Phi}(\vz^*,\vz_0)}}{(2-\alpha)\epsilon}\right)+1$. 
\end{proof}
\begin{proof}[Proof of Theorem~\ref{thm:high_strongly_convex_local}]
\label{appen:high_strongly_convex_local} 
By the definition of $\zeta_k$ in \eqref{eq:def_zeta}, we have 
    $\zeta_{k+1} = \frac{\zeta_k}{1+\eta_{k}\mu} \leq \frac{\zeta_k}{\eta_{k}\mu}$. Moreover, Lemma~\ref{lem:induction_bound_high} implies that $\frac{1}{\eta_k} \leq \max\left\{\gamma_p^{p-1} L_p(\Phi,\lambda) (D_{\Phi}(\vz^*,\vz_0))^{\frac{p-1}{2}}, \frac{\beta^{k}}{\sigma_0} \right\} \prod_{l=0}^{k-1} (1+\eta_l\mu)^{-\frac{p-1}{2}} = \max\left\{\gamma_p^{p-1} L_p(\Phi,\lambda) (D_{\Phi}(\vz^*,\vz_0))^{\frac{p-1}{2}}, \frac{\beta^{k}}{\sigma_0} \right\} \zeta_k^{-\frac{p-1}{2}}$.
    Theorem~\ref{thm:high_strongly_convex_local} follows from the two inequalities and the definition of $\tilde{\kappa}_p(\vz_0)$ in \eqref{eq:def_kappa_p_C_p}. 
\end{proof}

\begin{proof}[Proof of Corollary~\ref{coro:high_complexity_bound}]\label{appen:high_complexity_bound}

    Recall that $D_{\Phi}(\vz^*,\vz)\leq \epsilon$ if $\zeta_k \leq \frac{(2-\alpha)\epsilon}{2{D_{\Phi}(\vz^*,\vz_0)}}$ by \eqref{eq:bound_with_zeta}. Hence, it suffices to upper bound the number of iterations required such that the latter condition holds. Also, we only need to prove the second case where $\epsilon < \frac{1}{(2-\alpha)\gamma^2_p}(\frac{\mu}{L_p(\Phi,\lambda)})^{\frac{2}{p-1}}$, as the first case directly follows from Theorem~\ref{thm:high_strongly_convex_global}. 
    Let $N_1$ be the smallest integer such that $\zeta_{N_1} \leq \frac{1}{2}(\tilde{\kappa}_p(\vz_0))^{-\frac{2}{p-1}}$ and $\frac{\beta^{N_1}}{\sigma_0 \mu} \leq \tilde{\kappa}_p(\vz_0)$. By setting $\epsilon = %
    \frac{1}{(2-\alpha)\gamma^2_p}(\frac{\mu}{L_p(\Phi,\lambda)})^{\frac{2}{p-1}}$ in Theorem~\ref{thm:high_strongly_convex_global}, we obtain that the first condiiton in satisfied when 
    \begin{align}
        N_1 &\geq \max\Biggl\{\frac{1}{1-2^{-\frac{p-1}{p+1}}}C_p^{\frac{p-1}{p+1}}+ \log_2\Biggl(\frac{2\gamma_p^2L_p(\Phi,\lambda)^{\frac{2}{p-1}}{D_{\Phi}(\vz^*,\vz_0)} }{\mu^{\frac{2}{p-1}}}\Biggr)+1, 1\Biggr\} \nonumber \\
        & = \max\left\{\frac{1}{1-2^{-\frac{p-1}{p+1}}}C_p^{\frac{p-1}{p+1}}+ \frac{2}{p-1}\log_2 \tilde{\kappa}_p(\vz_0)+2, 1\right\}. \label{eq:bound_on_N1_high}
    \end{align}
    Moreover, the second condition is satisfied when $N_1 \geq -\log_{1/\beta}(\sigma_0 \mu \tilde{\kappa}_p(\vz_0))$. Since $N_1$ is the smallest integer to satisfy both conditions, we have 
    \begin{equation*}
        N_1 \leq \max\left\{\frac{1}{1-2^{-\frac{p-1}{p+1}}}C_p^{\frac{p-1}{p+1}}+ \frac{2}{p-1}\log_2 \tilde{\kappa}_p(\vz_0)+2, -\log_{\frac{1}{\beta}}(\sigma_0 \mu \tilde{\kappa}_p(\vz_0))+1,1\right\}.
    \end{equation*}
    Furthermore, since $\frac{\beta^{N_1}}{\sigma_0 \mu} \leq \tilde{\kappa}_p(\vz_0)$, we have ${\zeta_{k+1}}   \leq \tilde{\kappa}_p(\vz_0) {\zeta_k}^\frac{p+1}{2}$ by Theorem~\ref{thm:high_strongly_convex_local} for all $k \geq N_1$, which implies that 
    $(\tilde{\kappa}_p(\vz_0))^{\frac{2}{p-1}} \zeta_{k+1} \leq \left((\tilde{\kappa}_p(\vz_0))^{\frac{2}{p-1}} \zeta_k\right)^{\frac{p+1}{2}}$. By induction, we can prove that 
    $(\tilde{\kappa}_p(\vz_0))^{\frac{2}{p-1}} \zeta_{k}\leq 2^{-(\frac{p+1}{2})^{k-N_1}}$ for all $k\geq N_1$. Hence, the additional number of iterations required to achieve $\zeta_k \leq \frac{(2-\alpha)\epsilon}{2{D_{\Phi}(\vz^*,\vz_0)}}$ does not exceed
    \begin{equation}\label{eq:iterations_after_local_high}
        \Biggl\lceil \log_{\frac{p+1}{2}}\log_2 \Biggl(\frac{2D_{\Phi}(\vz^*,\vz_0)}{(2-\alpha) (\tilde{\kappa}_p(\vz_0))^{\frac{2}{p-1}}\epsilon}\Biggr)\Biggr\rceil \leq \log_{\frac{p+1}{2}}\log_2 \Biggl(\frac{2\mu^{\frac{2}{p-1}}}{(2-\alpha)\gamma^2_p (L_p(\Phi,\lambda))^{\frac{2}{p-1}}\epsilon}\Biggr)+1,
    \end{equation}
    where we used the definition of $\kappa_p$ in \eqref{eq:def_kappa_p_C_p}.  
    The result in \eqref{eq:high_complexity_after_local} now follows from \eqref{eq:bound_on_N1_high} and \eqref{eq:iterations_after_local_high}.
\end{proof}

\subsection{Proof of \texorpdfstring{Theorem~\ref{thm:steps_upper_high}}{Theorem 7.6}}\label{appen:steps_upper_high}

By Proposition~\ref{prop:ls_total}, the number of calls to the optimistic subsolver at the $k$-th iteration is given by $\log_{\frac{1}{\beta}}\frac{\sigma_k}{\eta_k}+1$. Since $\sigma_k=\eta_{k-1}(1+\eta_{k-1}\mu)^{\frac{p-1}{2}}/\beta$ for $k\geq 1$,  the total number of calls after $N$ iterations is
\begin{align}
    N_{\mathrm{total}} = \sum_{k=0}^{N-1} \left(\log_{\frac{1}{\beta}}\frac{\sigma_k}{\eta_k}+1\right) &= N + \log_{\frac{1}{\beta}}\frac{\sigma_0}{\eta_0} + \sum_{k=1}^{N-1} \log_{\frac{1}{\beta}}\frac{\eta_{k-1}(1+\eta_{k-1}\mu)^{\frac{p-1}{2}}}{\beta\eta_k} \\
    &= 2N-1 + \log_{\frac{1}{\beta}}\frac{\sigma_0}{\eta_0} + \sum_{k=1}^{N-1} \log_{\frac{1}{\beta}}\frac{\eta_{k-1}(1+\eta_{k-1}\mu)^{\frac{p-1}{2}}}{\eta_k} \\
    &= 2N -1 + \log_{\frac{1}{\beta}} \left(\frac{\sigma_0}{\eta_{N-1}}\prod_{k=0}^{N-2} (1+\eta_{k}\mu)^{\frac{p-1}{2}} \right). \label{eq:line_search_bound_high} %
\end{align}
In addition, we have ${\eta_{N-1}^{-\frac{2}{p-1}}} \prod_{k=0}^{N-2} (1+\eta_k\mu) \leq \max\left\{\gamma_p^2 (L_p(\Phi,\lambda))^\frac{2}{p-1}D_{\Phi}(\vz^*,\vz_0), {\beta^{\frac{2(N-1)}{p-1}}} \sigma^{-\frac{2}{p-1}}_0\right\}$ by Lemma~\ref{lem:induction_bound_high}. Together with \eqref{eq:line_search_bound_high}, this leads to 
\begin{align*}
N_{\mathrm{total}} &\leq 2N-1 + \max\left\{ \log_{\frac{1}{\beta}}(\sigma_0\gamma_p^{p-1} L_p(\Phi,\lambda) (D_{\Phi}(\vz^*,\vz_0))^{\frac{p-1}{2}}), -(N-1)\right\} \\
&=\max\left\{2N-1 + \log_{\frac{1}{\beta}}(\sigma_0\gamma_p^{p-1} L_p(\Phi,\lambda) (D_{\Phi}(\vz^*,\vz_0))^{\frac{p-1}{2}}), N \right\}
\end{align*} 
This completes the proof.

\end{document}